\newtheorem{theorem}{Theorem}[section]
\newtheorem{lemma}[theorem]{Lemma}
\newtheorem{corollary}[theorem]{Corollary}
\newtheorem{proposition}[theorem]{Proposition}
\newtheorem{conjecture}[theorem]{Conjecture}
\theoremstyle{definition}
\newtheorem{example}{Example}
\newcommand{\Z}{\mathbb Z}
\newcommand{\R}{\mathbb R}
\newcommand{\C}{\mathbb C}
\DeclareMathOperator{\ord}{ord}
\newcommand{\la}{\langle}
\newcommand{\ra}{\rangle}
\newcommand{\be}{\begin{equation}}
\newcommand{\ee}{\end{equation}}
\newcommand{\und}{\;\mbox{ and }\;}
\newcommand{\nn}{\nonumber}
\newcommand{\ber}{\begin{eqnarray}}
\newcommand{\eer}{\end{eqnarray}}
\newcommand{\Sum}[2]{\underset{#1}{\overset{#2}{\sum}}}
\newcommand{\Summ}[1]{\underset{#1}{\sum}}
\DeclareSymbolFont{goo}{OMS}{cmsy}{b}{n}
\DeclareMathSymbol{\gooT}{\mathalpha}{goo}{"1}
\begin{document}

\title[The $3k-4$ Theorem modulo a Prime]{The $3k-4$ Theorem modulo a Prime:
 High Density for $A+B$}

\author{David J. Grynkiewicz}
\address{Department of Mathematical Sciences\\ University of Memphis\\ Memphis, TN 38152\\
USA}
\email{diambri@hotmail.com}

\subjclass[2020]{11P70, 11B75, 11B25}
\keywords{Freiman, $3k-4$ Theorem, sumset,  isoperimetric method, atom, arithmetic progression}

\begin{abstract}
The $3k-4$ Theorem for $\mathbb Z$ asserts that, if $A,\,B\subseteq \mathbb Z$ are finite, nonempty subsets with $|A|\geq |B|$ and $|A+B|=|A|+|B|+r< |A|+2|B|-3$, then there exist arithmetic progressions $P_A$ and $P_B$  of common difference such that $X\subseteq P_X$ with $|P_X|\leq |X|+r+1$ for all $X\in \{A,B\}$. There is much partial progress extending this result to $\Z/p\Z$ with $p\geq 2$ prime. Here, among other results, we begin by showing that, if $A,\,B\subseteq G=\mathbb Z/p\mathbb Z$ are nonempty subsets with $|A|\geq |B|$, \ $A+B\neq G$, \ $|A+B|=|A|+|B|+r\leq  |A|+1.0527|B|-3$,
and $|A+B|\leq |A|+|B|-9(r+3)$, then there exist arithmetic progressions $P_A$, \ $P_B$ and $P_C$  of common difference such that $X\subseteq P_X$ with $|P_X|\leq |X|+r+1$ for all $X\in \{A,B,C\}$, where $C=-\,G\setminus (A+B)$. This gives a rare high density version of the $3k-4$ Theorem for general sumsets $A+B$ and is  the first instance with tangible (rather than effectively existential) values for the constants for general sumsets $A+B$ without also imposing additional constraints on the relative sizes of $|A|$ and $|B|$. The ideal conjectured density restriction under which a version of the $3k-4$ Theorem modulo $p$ is expected is $|A+B|\leq p-(r+3)$. In part by utilizing the above result as well as several other recent advances, we are  able to extend  methods of Serra and Z\'emor to give a version valid under this ideal density constraint. We show that,  if $A,\,B\subseteq G=\mathbb Z/p\mathbb Z$ are nonempty subsets with $|A|\geq |B|$, \ $A+B\neq G$, \ $|A+B|=|A|+|B|+r\leq  |A|+1.01|B|-3$,
and $|A+B|\leq |A|+|B|-(r+3)$, then there exist arithmetic progressions $P_A$, \ $P_B$ and $P_C$  of common difference such that $X\subseteq P_X$ with $|P_X|\leq |X|+r+1$ for all $X\in \{A,B,C\}$, where   $C=-\,G\setminus (A+B)$. This notably improves upon the original result of Serra and Z\'emor \cite{serra-3k-4-plunnecke}, who treated the case $A+A$, required $p$ be sufficiently large, and needed the  much more restrictive small doubling hypothesis $|A+A|\leq  |A|+1.0001|A|$.
\end{abstract}

\maketitle

\section{introduction}

Let $G$ be an abelian group. For  subsets $A,\,B\subseteq G$, their sumset and difference set are
$$A+B=\{a+b:\:a\in A,\,b\in B\}\;\und\; A-B=\{a-b:\; a\in A,\,b\in B\}.$$ When $A=B$, we write $2A=A+A$. When $B=\{x\}$ is a single element, we write $\{x\}+A=x+A=A+x$ for the translate of $A$ by $x$. We let $$ A^{\mathsf c}=G\setminus A$$ denote the complement of $A$. For a nonempty subset $A\subseteq \Z$, we let $\gcd(A)$ denote the greatest common divisor of all elements in $A$.  We often use $[m,n]=\{x\in \Z:\; m\leq x\leq n\}$ to denote the discrete interval between $m$ and $n$, for any $m,n\in \R$, with context making it clear when $[m,n]$ should instead be considered as an interval of real numbers or an interval modulo $p$ (and thus as a subset of $\Z/p\Z$).

The study of sumsets is a core topic in Additive Combinatorics.
The following is a  modern version of  the classical Freiman $3k-4$ Theorem. It says that an integer sumset $A+B\subseteq \Z$ with very small sumset must have $A$ and $B$ contained in short arithmetic progressions with $A+B$ containing a long length arithmetic progression, all with the same common difference. Notable is the fact that all bounds, including those for the progressions $P_A$, $P_B$ and $P_{A+B}$ as well as the hypothesis \eqref{hyp3k4} are tight, and that the result holds true for distinct summands $A\neq B$ (including the commonality of the difference) rather than only treating the case $A=B$. Having either of these desirable  features is rare in this area of Additive Combinatorics, and they are achieved at the cost of imposing a strong small doubling restriction on the sumset $|A+B|$ that, when $A=B$ with $|A|=k$, amounts to $|A+B|\leq 3k-4$. Hence the name.

\begin{theorem}[$3k-4$ Theorem]\label{thm-3k4Z}
Let $A,\,B\subseteq \Z$ be finite and nonempty with  $|A|\geq |B|$   and \be\label{hyp3k4}|A+B|=|A|+|B|+r\leq |A|+2|B|-3-\delta,\ee
where $\delta=1$ if $A=x+B$ for some $x\in \Z$ and otherwise $\delta=0$.
Then there are arithmetic progressions $P_A,\,P_B,\,P_{A+B}\subseteq \Z$  having  common difference $d:=\gcd(A-A+B-B)$ such that
\begin{align*} &X\subseteq P_X\;\und\;|P_X|\leq |A|+r+1,\quad
\mbox{ for all $X\in \{A,B\}$, while}\\
&P_{A+B}\subseteq A+B\;\und\;|P_C|\geq |A|+|B|-1.\end{align*}
\end{theorem}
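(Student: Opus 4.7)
The plan is to reduce to a normalized setting and then induct on $|A|+|B|$.

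First, translate so that $\min A = \min B = 0$; neither the sumset cardinality nor the difference gcd $d := \gcd(A-A+B-B)$ changes under translation. After translating, $d = \gcd(A\cup B)$, so replacing $A$ by $A/d$ and $B$ by $B/d$ we may assume $d = 1$. Writing $a := \max A$ and $b := \max B$, the theorem reduces to showing $a \leq |A|+r$ and $b \leq |B|+r$ (so $A \subseteq [0,a]$ and $B \subseteq [0,b]$ serve as $P_A$ and $P_B$), together with the existence of an interval of length $|A|+|B|-1$ inside $A+B$. The obvious inclusion $A+B \subseteq [0, a+b]$ combined with $|A+B| = |A|+|B|+r$ gives the baseline $a+b \geq |A|+|B|+r-1$, so what remains is to rule out one summand being too ``spread out'' within its bounding interval.

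Next I would induct on $|A|+|B|$, with the case $|B|=1$ being trivial. The crux is to remove an extremal element $b = \max B$ and analyze
$$|A+B| - |A+B'| = |(A+b) \setminus (A+B')|,$$
where $B' = B \setminus \{b\}$. If the increment is small, say at most $|A|-1$, then the pair $(A,B')$ still satisfies the hypothesis with adjusted parameter, and induction yields short APs containing $A$ and $B'$; one then checks that $b$ must lie within distance $r+1$ of $\max B'$ by arguing that otherwise $A+b$ would contribute too many fresh sums at the top of $A+B$. If instead the increment is large, then most of $A+b$ lies above $A+B'$, pinning down a long run of $A$ at the top end; a symmetric analysis at the bottom (removing $\min B$) then forces $A$ and $B$ into short intervals directly. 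The arithmetic progression $P_{A+B} \subseteq A+B$ of length $\geq |A|+|B|-1$ is subsequently extracted as the central portion of $[0, a+b]$, which is automatically fully covered once $A$ and $B$ are localized to short intervals.

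The main obstacle will be the bookkeeping through the induction: one must verify that the hypothesis $|A+B| \leq |A|+2|B|-3-\delta$ is preserved after removing an endpoint, including the activation of the correction $\delta = 1$, which triggers precisely when the reduced pair consists of translates — a property that may hold for $(A,B')$ but not for $(A,B)$, or conversely. Equally delicate is the verification that the three arithmetic progressions share the \emph{full} common difference $d = \gcd(A-A+B-B)$ and not merely a multiple of it; this is a distinguishing feature of the $3k-4$ theorem and requires an extremal identification of the configurations saturating \eqref{hyp3k4}, which in turn explains the correction term $\delta$ and the exact strictness of the hypothesis.
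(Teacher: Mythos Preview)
The paper does not actually prove Theorem~\ref{thm-3k4Z}; it is quoted as background, with pointers to Freiman, Lev--Smeliansky, Bardaji--Grynkiewicz, and \cite{Grynk-book} for the various pieces. So there is no in-paper proof to compare against, only the cited literature.

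Your inductive scheme of peeling off $\max B$ is plausible for the covering conclusions $A\subseteq P_A$, $B\subseteq P_B$, though it is not the standard route: the Lev--Smeliansky argument instead reduces modulo the larger diameter and invokes Cauchy--Davenport/Vosper-type information in $\Z/m\Z$, which handles the $\delta$ correction and the common difference $d$ more transparently than an element-by-element induction would. Your final paragraph correctly flags that the $\delta$ bookkeeping and the identification of the exact difference $d=\gcd(A-A+B-B)$ are the delicate points, but you do not actually resolve them, and in an element-removal induction they are genuinely awkward (removing $\max B$ can change both $\delta$ and $d$).

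There is, however, a concrete gap in your treatment of $P_{A+B}$. The assertion that the central portion of $[0,a+b]$ is ``automatically fully covered once $A$ and $B$ are localized to short intervals'' is not true by any direct pigeonhole. Knowing $A\subseteq[0,|A|+r]$ and $B\subseteq[0,|B|+r]$ tells you that $[0,a+b]\setminus(A+B)$ has at most $r+1$ elements, but that alone does not force a hole-free run of length $|A|+|B|-1$: a naive count of representations $c=x+y$ with $x\in[0,a]$, $y\in[0,b]$ gives roughly $\min(a,b)+1$ pairs for central $c$, while the number of ``bad'' pairs (those with $x\notin A$ or $y\notin B$) can be as large as $2(r+1)$, and $\min(a,b)+1\geq r+3$ need not exceed $2r+2$ once $r\geq 1$. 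Establishing the long progression inside $A+B$ is exactly the content of \cite{3k-4-itziar} (and \cite{3k-4-Freiman-AP} for $A=B$) and requires a separate structural argument, not just the localization of $A$ and $B$.
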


Without the conclusions regarding $P_{A+B}$ or the refinement involving $\delta$, Theorem \ref{thm-3k4Z} may be derived from an old result of Freiman \cite{Freiman-3k4-distinct} (see the arguments from \cite{3k-4-stanchescu}) with him having treated the case $A=B$ in a separate earlier work \cite{3k-4-Freiman-A=B}. An alternative (and now standard) proof  was given by  Lev and Smelianksy \cite{3k-4-lev-smel}, who derive a variation where $B$ is assumed to be the set with smaller diameter rather than smaller cardinality.  The conclusion regarding $P_{A+B}$ was derived by  Freiman for $A=B$ in \cite{3k-4-Freiman-AP} with the case $A\neq B$ treated in  \cite{3k-4-itziar}.  The modern text \cite{Grynk-book}  collects all these versions together into one source and includes the added refinement involving  $\delta$.
The tightness of all bounds in Theorem \ref{thm-3k4Z} can be seen by considering the following  examples.

\begin{example} Take $$A=\{0,2,\ldots,2(r+1)\}\cup [2r+3,m]\;\und\;B=\{0,2,\ldots,2(r+1)\}\cup [2r+3,n],$$ where $r\geq -1$, \ $m\geq n\geq 2r+3$ and $m\geq 2r+4$. Then $A+B=\{0,2,\ldots,2(r+1)\}\cup [2r+3,m+n]$ with $|A|=m-r\geq r+4$, \ $|A|\geq |B|=n-r\geq r+3$ and $|A+B|=m+n-r=|A|+|B|+r$, while $P_A=[0,m]$, \ $P_B=[0,n]$ and $P_C=[2r+2,m+n]$. This shows all bounds in the conclusion of Theorem \ref{thm-3k4Z} can hold with equality simultaneously.
\end{example}

\begin{example} Let $r\geq 0$ and let   $A=B=P_1\cup P_2$ be the union of two arithmetic progressions with common difference such that $|A|=r+3$ and $\max P_1$ is much smaller than $\min P_2$. Then $A+B=A+A$ with $|A+A|=3|A|-3=2|A|+r$, \  $A=B$ and  $|A|=|B|=r+3$, and the conclusion of Theorem \ref{thm-3k4Z} cannot hold.
\end{example}

\begin{example}Take $B=[0,r+1]$ with $r\geq -1$ and take $A=P_1\cup P_2$ to be a union of two intervals with $|A|\geq \max\{|B|,3\}$ and  $\max P_1$ much smaller than $\min P_2$. Then $|A+B|=|A|+2|B|-2=|A|+|B|+r$ with $|A|\geq |B|=r+2$, and   the conclusion of Theorem \ref{thm-3k4Z} cannot   hold.
\end{example}

Theorem \ref{thm-3k4Z} is valid for $G=\Z$. It is a general principle that, so long as $|A|$ and $|B|$ are extremely small  \cite{lev-rect-threshold} \cite[Chapter 20]{Grynk-book}, then a sumset $A+B$ in a finite abelian group $G$ has the same behaviour as that of an integer sumset. Thus one can expect a version of Theorem \ref{thm-3k4Z} to hold in a finite abelian group $G$ provided the sets involved are not too large. In this paper, we are concerned with such extensions for $G=\Z/p\Z$ with $p$ prime. The question of extending Theorem \ref{thm-3k4Z} to $\Z/p\Z$ has received considerable attention. In order to help orient ourselves, we begin with the following conjecture representing the current best-case scenario that could hold in $\Z/p\Z$. We remark that the definitions of when $\delta_B=1$ or $\delta_C=1$ are still tentative below. The reader who wishes to ignore the complicated definitions of  $\delta_C$ and $\delta_B$  can simply take both these constants equal to the constant $\delta$, whose value is  $1$ when one of the sets $A$, $B$ or $C$ is a translate of another of these sets and otherwise equals $0$.

\begin{conjecture}\label{conj-crit-pair}
Let $G=\Z/p\Z$ with $p\geq 2$  prime, let $A,\, B\subseteq G$ be nonempty subsets with $A+B\neq G$, and set $C=-\,G\setminus (A+B)$. Suppose
  $|A|\geq |B|$   and \be\nn|A+B|=|A|+|B|+r\leq \min\{|A|+2|B|-3-\delta_B,\; p-r-3-\delta_C\},\ee
  where, for $X\in\{A,B,C\}$ with $(X,Y,Z)$ a permutation of $(A,B,C)$, $$\delta_X=\left\{
             \begin{array}{ll}
               1 & \hbox{if $r\geq 0$ and $X$ is a translate of either $Y$ or $Z$} \\
               1 & \hbox{if $r\geq 2$, \ $Y$ and $Z$ are translates of each other, and   $|Y|=|Z|=r+4$} \\
               0 & \hbox{otherwise.}
             \end{array}
           \right.$$   
Then there are arithmetic progressions $P_A,\,P_B,\,P_{C}\subseteq G$ all having  common difference  such that
\begin{align*} &X\subseteq P_X\;\und\; |P_X|\leq |X|+r+1\quad \mbox{ for all $X\in \{A,B,C\}$}.\end{align*}
\end{conjecture}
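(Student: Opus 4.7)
\medskip
\noindent\textbf{Proof proposal.} The plan is to combine the integer $3k-4$ Theorem (Theorem~\ref{thm-3k4Z}) with Freiman's rectification principle for $\Z/p\Z$, falling back on the isoperimetric method in the genuinely dense regime. The starting point is the three-term symmetry built into the setup: if $c\in C=-(G\setminus(A+B))$, then $-c\notin A+B$, so $a+b+c\neq 0$ for every $(a,b,c)\in A\times B\times C$, giving $A+B+C\subseteq G\setminus\{0\}$. Thus $C$ stands on equal footing with $A$ and $B$, and the density hypothesis $|A+B|\leq p-r-3-\delta_C$ translates into the lower bound $|C|\geq r+3+\delta_C$. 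This triality is what allows the conclusion to produce APs for all three sets sharing a \emph{common} difference.

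Assuming first that $|A+B|\leq p/K$ for a suitable absolute constant $K$, Freiman's rectification principle, in the sharp forms due to Bilu--Lev--Ruzsa and Deshouillers--Freiman, yields a Freiman $2$-isomorphism of $A\cup B$ onto a pair of integer sets of the same sumset size. Applying Theorem~\ref{thm-3k4Z} to the lift and pulling back the progressions $P_A,P_B,P_{A+B}$ via the inverse isomorphism returns APs in $G$ with a common difference $d\in G$. The progression for $C$ is then automatic: the complement of $P_{A+B}$ in $G$ is itself an AP with difference $d$ and length $p-|P_{A+B}|\leq p-|A|-|B|+1=|C|+r+1$, so $P_C:=-(G\setminus P_{A+B})$ is the required AP.

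The main obstacle is the dense regime $p/K<|A+B|\leq p-r-3-\delta_C$, where rectification is unavailable. Here I would follow the isoperimetric strategy carried out in the paper for the range $|A+B|\leq|A|+1.01|B|-3$: analyse $k$-atoms in the Cayley-type addition graphs on $G$, use the absence of proper nontrivial subgroups of $\Z/p\Z$ to rule out the Kneser/Vosper ``subgroup'' alternative, and exploit the $(A,B,C)$ symmetry to run the argument on whichever pair is most accessible, iteratively enlarging the smallest of the three sets relative to the sumset. Each atom extraction should reduce to a smaller instance of the conjecture, and one aims for the recursion to terminate either in the rectifiable regime handled above or in one of the extremal configurations contributing to $\delta_B$ or $\delta_C$.

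The hardest step is the slope improvement from $1.01|B|-3$ to the full $2|B|-3-\delta_B$. This is the perennial bottleneck in all partial $3k-4$ results over $\Z/p\Z$: each atom extraction loses a multiplicative factor, and standard compression arguments yield only a small margin over $|A|+|B|$. Reaching the sharp slope, together with the precise additive correction $\delta_B$ and the correct classification of the three extremal configurations encoded by $\delta_X$, appears to require a genuinely new structural input---plausibly a refined description of $2$-atoms across the full range $|A+B|\leq|A|+2|B|-3$ matching the three extremal integer families displayed before the conjecture. It is precisely this last step that the present paper leaves open, which is why Conjecture~\ref{conj-crit-pair} is stated as a conjecture rather than a theorem.
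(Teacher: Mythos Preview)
You have correctly identified the essential point: Conjecture~\ref{conj-crit-pair} is stated in the paper as an open conjecture, and the paper contains no proof of it. There is therefore no ``paper's own proof'' against which to compare your proposal; the paper's contributions are the partial results Theorem~\ref{thm-p-O(r)} and Theorem~\ref{thm-ideal-density}, which require the much stronger small-doubling hypotheses $r\leq 0.0527|B|-3$ and $r\leq 0.01|B|-3$ respectively, rather than the full $r\leq |B|-3-\delta_B$.

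Your sketch of the partial strategy is broadly in the spirit of what the paper does, but the details diverge. In the low-density regime the paper does not rectify $A\cup B$ directly via Bilu--Lev--Ruzsa or Deshouillers--Freiman; instead it uses an exponential-sum argument (Theorem~\ref{fouier-II-distinct}) combined with the Vosper-duality reduction of Proposition~\ref{lemma-four-case1} to produce large rectifiable subsets $A'\subseteq A$, $B'\subseteq B$. In the dense regime the paper does use $k$-atoms, but not in the recursive ``enlarge the smallest set'' fashion you describe: rather, it passes to an $(r+3)$-atom $X$ of $A$, then to an atom of $X$, applies the exponential-sum result to the resulting small pair, and finally propagates the AP structure back to $A$, $B$, $C$ via Huicochea's reduction (Theorem~\ref{thm-mario-apred}) and, for Theorem~\ref{thm-ideal-density}, the Petridis--Pl\"unnecke bounds applied to iterated sumsets $nX$. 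Your closing paragraph accurately locates the genuine obstruction: pushing the slope from $1+\alpha$ with small $\alpha$ up to the conjectured $2$ is exactly what remains open.
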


The reader may notice that the conclusion in Conjecture \ref{conj-crit-pair} has a more symmetric formulation in terms of complementation than that of Theorem \ref{thm-3k4Z}. However, setting $$P_{A+B}=(-P_C)^\mathsf c=-(P_C)^\mathsf c=-\, G\setminus P_C,$$ it follows that $P_{A+B}$ is also an arithmetic progression with the same common difference as $P_C$, and that the conclusion $C\subseteq P_C$ with $|P_C|\leq |C|+r+1$ is equivalent to $P_{A+B}\subseteq A+B$ with $|P_{A+B}|=p-|P_C|\geq p-|C|-r-1=|A+B|-r-1=|A|+|B|-1$, which is what is stated in Theorem \ref{thm-3k4Z}.
To explain the additional density restriction $|A+B|\leq p-r-3-\delta_C$ given in Conjecture \ref{conj-crit-pair}, consider the following examples.

\begin{example}
Fix $r\geq -1$,  let $C=[0,r+1]\subseteq \Z$ and let $B=P_1\cup P_2\subseteq \Z$ be a union of two intervals with $|B|\geq \max\{r+2,3\}$, \  $\min P_1=0$ and  $\min P_2-\max P_1= M$ for some large constant $M\geq r+3$.  Then $|B|\geq|C|$ and $|B+C|=|B|+2|C|-2=|B|+|C|+r$ with  $P_B=[0,\max P_2]$ the minimal arithmetic progression containing $B$ and $|P_B|=|B|+M-1$, which is unbounded with respect to $|B|$ or $r$.  Fix any large prime $p$ and now consider $B$ and $C$ as subsets of $G=\Z/p\Z$, so we consider the original sets modulo $p$. For $p$ sufficiently large, the set $B\subseteq G$  still can  be covered no more effectively by an arithmetic progression than by using the progression $P_B$ considered modulo $p$, meaning the conclusion of Conjecture \ref{conj-crit-pair} fails  by an arbitrarily  large margin for $B$. Let
 $A=-(B+C)^\mathsf c$. Then $|A|\geq |B|$ (for $p$ sufficiently large).
 It is easily noted that $B+(C\cup \{x\})\neq B+C$ for all $x\in  C^\mathsf c$  (for $p$ sufficiently large). Thus Vosper Duality  (stated in Proposition \ref{lemma-dual}) ensures that $A+B=-C^\mathsf c$ with $|A+B|=|A|+|B|+r$, and now $A+B$ is an example for which the conclusions of  Conjecture \ref{conj-crit-pair} fail  with  $|A+B|=|C^\mathsf c|=p-r-2$ and $|A|\geq |B|\geq |C|=r+2$. \end{example}

 \begin{example} Fix $r\geq 0$ and let  $B=P_1\cup P_2\subseteq \Z$ be a union of two intervals with $|B|=r+3$, \  $\min P_1=0$ and  $\min P_2-\max P_1=M$ for some large constant $M$.  Then $ |B+B|=3|B|-3=2|B|+r$ with  $P_B=[0,\max P_2]$ the minimal arithmetic progression containing $B$ and $|P_B|=|B|+M-1$, which is unbounded with respect to $|B|=r+3$.  Fix any large prime $p$ and now consider $B$  as a subset of $G=\Z/p\Z$. For $p$ sufficiently large, the set $B\subseteq G$  still can  be covered no more effectively by an arithmetic progression than by using the progression $P_B$ considered modulo $p$, meaning the conclusion of Conjecture \ref{conj-crit-pair} fails  by an arbitrarily  large margin for $B$. Let $A=-(2B)^\mathsf c$.
 It is easily noted that $B+(B\cup \{x\})\neq B+B$ for all $x\in B^\mathsf c$  (for $p$ sufficiently large). Thus Vosper duality  (stated in Proposition \ref{lemma-dual}) ensures that $A+B=-B^\mathsf c$ with $|A+B|=|A|+|B|+r=|A|+2|B|-3$ and $|A|\geq |B|$ (for $p$ sufficiently large), and now  $A+B$ is a   counter-example to Conjecture \ref{conj-crit-pair} holding with  $|A+B|=|B^\mathsf c|=p-|B|=p-r-3$, $B=C$ and $|A|\geq |B|=|C|=r+3$.
\end{example}

\begin{example} Fix $r\geq 2$ with $p=4r+11$ prime and let  $A=B=[0,r+1]\cup \{r+3,2r+6\}=[0,r+1]\cup \{\frac{p+1}{4},\frac{p+1}{2}\}$ considered modulo $p$ (so as subsets of $\Z/p\Z$).  Then $|A|=r+4$ and $A+B=2A=[0,2r+4]\cup [2r+6,3r+7]\cup \{3r+9\}$ with $|A+A|=3r+8=3|A|-4=2|A|+r=p-r-3$. By Proposition \ref{prop-pablo-eg-works}, the minimal length of an arithmetic progression $P_A$ containing $A$ is $2r+7=|A|+r+3$, meaning $A+B=A+A$ fails to satisfy the conclusion of Conjecture \ref{conj-crit-pair} with $|A+A|=3|A|-4=2|A|+r=p-r-3$, \ $A=B$,  $|A|=|B|=r+4$ and $|C|=r+3$.
\end{example}

Unlike Examples 1--5, Example 6 was  at first missed. Its discovery in \cite{pablo-serra-3k4} led to the  slightly  modified Conjecture \ref{conj-crit-pair} versus earlier versions of the conjecture (e.g., \cite{serra-3k-4-plunnecke}). Since the details showing Example 6 is a counterexample were not given  in \cite{pablo-serra-3k4}, we have included them in Proposition \ref{prop-pablo-eg-works} below.  Worth noting, this construction requires $r\geq 2$, and the failure in the conclusion of Conjecture \ref{conj-crit-pair} is only off by $2$, rather than an unbounded amount as for the other examples.

%With Conjecture \ref{conj-crit-pair} as a reference, we can now  overview efforts at  extending Theorem \ref{thm-3k4Z} to $\Z/p\Z$.

For the purpose of overviewing previous progress towards Conjecture \ref{conj-crit-pair}, consider the following general setup. Let $G=\Z/p\Z$ with $p\geq 2$ prime and let $A,\, B\subseteq G$ be nonempty subsets with $A+B\neq G$, \
  $|A|\geq |B|$ and \be\label{talkbound}|A+B|=|A|+|B|+r\leq|A|+(1+\alpha)|B|-3,\ee where $\alpha\in (0,1]$ is a real number.
Since $A+B\neq G$, the Cauchy-Davenport Theorem (Theorem \ref{thm-cdt}) ensures $r\geq -1$. As $\alpha\rightarrow 1$, the bound \eqref{talkbound} becomes that from Conjecture \ref{conj-crit-pair}. Note, it is equivalent to $r\leq \alpha |B|-3$ and also to $|B|\geq \alpha^{-1}(r+3)$. Smaller values of $\alpha$ then correspond to a stronger small doubling hypothesis. Sometimes, the hypothesis \eqref{talkbound} is stated without the $-3$, so in the form $|A+B|\leq |A|+(1+\alpha)|B|$.
%This sometimes leads to issues with small primes $p$, and as we will see later, a result valid with a hypothesis of the form $|A+B|\leq |A|+(1+\alpha-\epsilon)|B|$ for sufficiently large $p$ can be deduced from one of the form \eqref{talkbound}, for small $\epsilon>0$.
Particularly in older work, the conclusion regarding $P_C$ is often omitted.  In all such cases,  rectification arguments (see Proposition \ref{prop-3k4-equiv}) can be used to deduce the desired conclusion regarding $P_C$ from the conclusions regarding $P_A$ and $P_B$ combined with Theorem \ref{thm-3k4Z}.
 As such, there is no need to distinguish which results  explicitly state a conclusion regarding $P_C$, and which do not.

Most partial progress towards Conjecture \ref{conj-crit-pair} involves achieving the desired conclusions in Conjecture \ref{conj-crit-pair} by  imposing a stronger small doubling hypothesis $r\leq \alpha|B|-3$ or $r\leq \alpha|B|$ together with an additional density assumption ensuring the sets in question are not too large. There are three major variations for the latter restriction.
The first style has the form $|A|\leq cp$ for some real constant $c>0$.  The second, closely related, has the form $|A+B|\leq cp$ for some real constant $c>0$. Note that $|B|\leq |A|\leq cp$ implies $|A+B|<|A|+(1+\alpha)|B|-3\leq (2+\alpha)cp-3,$ so a restriction of the first type  implies one of the second type.  Finally, in some cases, the density restriction on $|A+B|$ or $|A|$ is shifted to a relative one between  $r$ and $p$, thus having the form $r\leq cp$ for some real constant $c>0$. This removes unnecessary density restrictions from  $A$, $B$ or $A+B$ and instead shifts them to the parameter $r$.
Of course, if the upper bound in \eqref{talkbound} holds with equality, so for $A+B$ having small doubling constant close to the value given by $\alpha$,  then  $|B|\approx\alpha^{-1}(r+3)\leq \alpha^{-1}(cp+3)$ is implied, which is again a density restriction of the first type, though only on the smaller set $B$. However, the value $c$ can be held  constant as $\alpha\rightarrow 0$, so sufficiently small values of $\alpha$ (relative to the fixed constant $c$) lead only to trivial constraints on $|B|$. Indeed, we always have $|B|\leq \frac{p-1}{2}$. Otherwise $|A|\geq |B|\geq \frac{p+1}{2}$ and the Cauchy-Davenport Theorem (Theorem \ref{thm-cdt}) imply $A+B=G$, contrary to hypothesis. This means,  once $\alpha\leq 2c,$ then the bound $r\leq \alpha |B|<\alpha\frac{p}{2}\leq cp$ is guaranteed to hold, so  a restriction of the form $r\leq cp$ includes the case of optimal  density restrictions for  sufficiently small values of $\alpha>0$, as well as including cases with $\alpha$ larger, even up to $1$, having increasingly stronger density restrictions imposed on $B$.

Ideally, one would like to minimize the amount of unnecessary  density restrictions while at the same time allowing  $\alpha$ to be as close to $1$ as possible. These are  competing goals, with improved estimates for one coming at the cost of the estimates for the other. The range of results can generally be grouped as follows.
%We describe the results by stating the \emph{additional} hypothesis needed for the given result that suffice for Conjecture \ref{conj-crit-pair} to be true under these added assumptions

\textbf{Small $r$.} When $r$ is very small, results more general than Conjecture \ref{conj-crit-pair} are known, often valid in more arbitrary groups $G$ or fully characterizing $A$ and $B$. Along these lines, we always have $r\geq -1$ by the Cauchy-Davenport Theorem (Theorem \ref{thm-cdt}). When $r=-1$, Vosper's Theorem \cite[Theorem 8.1]{vosper} \cite{Grynk-book} implies that Conjecture \ref{conj-crit-pair} holds without restriction. When $r=0$, Hamidoune and Rodseth \cite{ham-rod-vosp+1} established Conjecture \ref{conj-crit-pair} under the additional assumption $7\leq |A|+|B|=|A+B|\leq p-4$. A more general result from \cite{G-kst+1} fully characterizes all $A$ and $B$ with $|A+B|=|A|+|B|$ in all cases.  In particular, Conjecture \ref{conj-crit-pair} is true without restriction for $r=0$. (To derive Conjecture \ref{conj-crit-pair} from \cite[Theorem 4.1]{G-kst+1}, note that the hypotheses of Conjecture \ref{conj-crit-pair} ensure $|A|,|B|,|C|\geq 3$, and we must have a strict inequality in one of these estimates as $p=9$ is not prime. Using Vosper duality (Proposition \ref{lemma-dual}) allows one to w.l.o.g. assume $|C|\geq 4$, and then applying \cite[Theorem 4.1]{G-kst+1} reduces consideration to Vosper's Theorem). This allows us to assume $r\geq 1$ whenever trying to establish a case in Conjecture \ref{conj-crit-pair}. When $r=1$, Hamidoune, Serra and Z\'emor \cite{ham-vosp+2} established Conjecture \ref{conj-crit-pair} additionally assuming $|A|,\,|C|\geq 5$, \ $|B|\geq 4$ and $p\geq 53$.

\textbf{Low Density.} If one is willing to  sacrifice all restraint with regard to the density assumptions, it is possible to achieve the maximal value $\alpha=1$. Along these lines, the rectification results of Bilu, Lev and Ruzsa from \cite[Theorem 3.1]{rectification}  combined with Theorem \ref{thm-3k4Z} ensure Conjecture \ref{conj-crit-pair} is true additionally assuming $|A\cup B|\leq \log_4 p$. These were later improved as a consequence of a result of  Lev \cite{lev-rect-threshold} (combined with \cite[Theorem 20.2]{Grynk-book}) to $|A\cup B|\leq \lceil\log_2 p\rceil$ (see Theorem \ref{thm-smallr}). When $A=B$, taking into account that $A+A$ has small sumset, alternative rectification arguments of Green and Ruzsa \cite{Green-Ruza-rect} combined with Theorem \ref{thm-3k4Z} ensure Conjecture \ref{conj-crit-pair} is true  assuming $A=B$ with $|A|< cp$, where $c=(1/96)^{108}$. For general sumsets $A+B$, the added assumption $r\leq cp-1.2$ suffices, where $c=1.4\cdot 10^{-63\,951}$ \cite[Theoreom 21.8]{Grynk-book} (note there is an error in the explicit statement of the effectively existential constant $c$ in \cite[Theoreom 21.8]{Grynk-book} owing to a typographic error when stating  the rectification threshold of Green and Ruzsa).  Finally,  for smaller $r$,  there is a similar result \cite[Theorem 21.7]{Grynk-book} showing the logarithmic bound $r+2\leq \frac{5}{16}\lceil\log_2 p\rceil$ suffices.

\textbf{Mid-Range Density.} In this range,  a balanced approach is taken aiming for tangible (rather than effectively existential) estimates for both the small doubling  and density constraints simultaneously. The original result of Freiman \cite{Freiman-vosp} \cite{Freiman-monograph} \cite{natbook} is the prototype here.  It shows Conjecture \ref{conj-crit-pair} true for $A=B$ assuming a flexible pair of  assumptions $r\leq \alpha |A|-3$ and $|A|\leq cp$, with the values of $c$ and $\alpha$ dependent on each other. Despite the variation allowed, the choice of assumptions $|2A|\leq 2.4|A|-3$ and $|A|\leq \frac{p}{35}$ fixed in   \cite[Theorem 2.11]{natbook} is now  emblematic with the result itself. Rodseth \cite{rodseth-2.4} improved Freiman's estimates to  $|2A|\leq 2.4|A|-3$ and $|A|\leq \frac{p}{10.7}$. A range of similar results based on these methods are presented  in \cite[Chapter 19]{Grynk-book} with the density assumptions of the form $|2A|\leq cp$ with $c\leq \frac12$. Candela, Serra and Spiegel showed
 $|2A|\leq 2.48|A|-7$ and $|A|<p/10^{10}$ \cite{pablo-serra-3k4} suffices for $A=B$. Lev and Shkredov \cite{lev-3k-4-highenrg} showed
 $|2A|< 2.59|A|-3$ and $101\leq |A|<0.0045p$ suffices for $A=B$, and that  $|A-A|<2.6|A|-3$ and $|A|<0.0045p$ suffices for $A=-B$. Lev and Serra \cite{lev-serra-2.7} showed
 $|2A|<2.7652|A|-3$ and $10\leq |A|< 1.25\times 10^{-6}p$ suffices for $A=B$. All these results are for $A=B$ or $A=-B$. For general sumsets $A+B$, there are results of this type given in \cite[Chapter 19]{Grynk-book} provided $|A|$ and $|B|$ are very close in size. For example, $|B|\leq |A|\leq \frac{4}{3}$, \ $|A+B|\leq |A|+1.05|B|-3$ and $|A+B|\leq 0.0044p$. Huicochea achieved similar results with more relaxed constraints on the relative sizes of $|A|$ and $|B|$ , e.g., $10^3|A|^{\frac23}\leq |B|\leq |A|$, \ $|A+B|<|A|+1.03|B|$ and $|A|<0.0045p$ \cite{huicochea-3k-4-distinct}, with improved bounds when $|A|/|B|$  is large.

\textbf{High Density.} Many results in the mid-range density range have a bias towards focussing on the bound $r\leq \alpha |B|-3$ while simultaneously having a ``reasonable'' density restriction. However, applications often  require working with high density sets. An alternative focus is to achieve as a high a density as possible while simultaneously having ``reasonable'' small doubling constraints.   The work of Candela, Gonz\'alez-S\'anchez and  Grynkiewicz  \cite{pablo-grynk-3k-4-A=B} is the example in this range. There, Conjecture \ref{conj-crit-pair} is established for $A=B$  when $|2A|\leq 2.136|A|-3$ and $|2A|\leq \frac34p$, with improved estimates for the small doubling assumption when $|A|$ is close to $\frac13p$.

\textbf{Optimal Density.} The goal here is to sacrifice restraint with $\alpha$ in order to achieve density constraints of the optimal conjectured order of magnitude   $|A+B|\leq p-O(r)$.
Per earlier discussion, results with relative constraints between $r$ and $p$ yield results with no unnecessary density restrictions (for very small values of $\alpha$). Thus the two results mentioned in the low density section, of the form $r\leq cp-1.2$ and $r+2\leq \frac{5}{16}\lceil \log_2 p\rceil$,  also yield  optimal density results for general sumsets $A+B$ for \emph{very} small $\alpha$. However, for $A=B$, the  gold standard goes to Serra and Z\'emor \cite{serra-3k-4-plunnecke} who showed Conjecture \ref{conj-crit-pair} to be true needing only the additional assumptions  $|2A|\leq 2.0001|A|$, \ $p>2^{94}$ and $|A|\geq 4$ (this condition ensures that either $|2A|\leq 3|A|-4$ or that the case $r\leq 0$ is applicable).

\medskip

The focus of this paper regarding  Conjecture \ref{conj-crit-pair} is on general sumsets $A+B$ in the high and optimal density ranges, where there  are currently \emph{no}  results having  tangible estimates for the small doubling constraint. Indeed, without constraints on the relative sizes of $|A|$ and $|B|$, there are no results of any sort having tangible estimates for the small doubling constraint for general sumsets $A+B$.
The high density result from \cite{pablo-grynk-3k-4-A=B} was achieved by means of the original exponential sum argument of Freiman (using Rodseth's improved handling of the exponential sum calculations) combined with a new usage of Vosper Duality to reduce the density restriction imposed by the original combinatorial reduction portion of the argument.
We will show (in Proposition \ref{lemma-four-case1}) how this Vosper Duality argument can be adapted to general sumsets $A+B$ and then combine it with a new adaptation of  exponential sum arguments to general sumsets $A+B$. This will give us a result (Theorem \ref{fouier-II-distinct}) with tangible estimates for the small doubling constant assuming the sizes of $|A|$ and $|B|$ are very close. Combining this with  isoperimetric reduction methods will then allow us to remove most density restrictions as well as the restrictions on the sizes  of $|A|$ and $|B|$, yielding the following result. Note Theorem \ref{thm-p-O(r)} will be derived as a simple consequence of  Theorem \ref{thm-r-O(p)-a}, whose statement is slightly more technical but more general.

\begin{theorem}\label{thm-p-O(r)}
Let $G=\Z/p\Z$ with $p\geq 2$ a prime, let $A,\, B\subseteq G$ be nonempty subsets with $A+B\neq G$,  and set $C=-\,G\setminus (A+B)$. Suppose
  $|A|\geq |B|$   and  \begin{align}\nn&|A+B|=|A|+|B|+r\leq \min\{|A|+1.0527|B|-3,\quad p-9(r+3)\}.\end{align}
Then there are arithmetic progressions $P_A,\,P_B,\,P_{C}\subseteq G$ all having  common difference  such that
\begin{align*} &X\subseteq P_X\;\und\; |P_X|\leq |X|+r+1\quad \mbox{ for all $X\in \{A,B,C\}$}.\end{align*}
\end{theorem}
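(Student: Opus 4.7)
The plan is to deduce Theorem \ref{thm-p-O(r)} as a corollary of the more general (and slightly more technical) Theorem \ref{thm-r-O(p)-a}, whose proof splits naturally into two stages: a Fourier-analytic core---yielding Theorem \ref{fouier-II-distinct} under the additional hypothesis that $|A|$ and $|B|$ are comparable in size---and an isoperimetric reduction that simultaneously removes the size-ratio restriction between $|A|$ and $|B|$ and rearranges the density constraint into the optimal-order form $|A+B|\leq p-9(r+3)$. Before either stage, I would reduce to the case $r\geq 2$ via Vosper's theorem (for $r=-1$) and the Hamidoune--Rodseth and Hamidoune--Serra--Z\'emor results (for $r\in\{0,1\}$) recalled in the introduction, so that henceforth $|B|\geq \alpha^{-1}(r+3)$ with $\alpha=0.0527$.

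The Fourier-analytic core (Theorem \ref{fouier-II-distinct}) generalizes the Freiman/R\o dseth exponential-sum method, as refined in the high-density $A=B$ setting of \cite{pablo-grynk-3k-4-A=B}, from the symmetric case to arbitrary sumsets. Writing $\widehat{X}(\xi)=\sum_{x\in X}e^{-2\pi i x\xi/p}$ for $X\in\{A,B,C\}$, the fact that $(A+B)\cap(-C)=\emptyset$ yields, via Fourier inversion evaluated at $0$, the identity
$$\sum_{\xi\neq 0}\widehat{A}(\xi)\,\widehat{B}(\xi)\,\widehat{C}(\xi)=-\,|A|\,|B|\,|C|.$$
The goal is then to show that one of $|\widehat{A}(\xi_0)|,\,|\widehat{B}(\xi_0)|,\,|\widehat{C}(\xi_0)|$ must be anomalously close to the trivial bound $|A|$, $|B|$ or $|C|$ at some nonzero frequency $\xi_0$; that spectral concentration rectifies each of $A$, $B$, $C$ into a short arithmetic progression whose common difference is determined by $\xi_0$. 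The Vosper-Duality reduction of Proposition \ref{lemma-four-case1}---the key novelty relative to prior $A=B$ treatments---is invoked whenever $|A+B|$ approaches $p$: in that regime $|C|$ becomes small, and one runs the exponential-sum argument instead on a dual triple such as $(A,C,B)$ or $(B,C,A)$, whose sumset is correspondingly smaller. This is what escapes the roughly $|A+B|\leq \tfrac{3}{4}p$ density ceiling that a direct Fourier estimate would otherwise impose.

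With Theorem \ref{fouier-II-distinct} in hand, the unbalanced regime $|A|\gg |B|$ is handled by isoperimetric reduction. I would apply Hamidoune-type atom/fragment methods from \cite[Chapters~8--10]{Grynk-book} to locate a subset $A'\subseteq A$ with $|A'|$ of order $|B|$ such that the pair $(A',B)$ inherits (perhaps with a mildly worsened constant) the small-doubling hypothesis. Theorem \ref{fouier-II-distinct} applied to $(A',B)$ yields an arithmetic-progression common difference $d$, after which the standard fragmentation/merging machinery lifts the progression structure from $A'$ back to $A$, and, via Theorem \ref{thm-3k4Z} in rectified form, from $B$ to both $A+B$ and $C=-\,G\setminus(A+B)$.

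The main obstacle is keeping the numerical constants tangible throughout. In the symmetric case $A=B$ the exponential sum $\sum_\xi |\widehat{A}(\xi)|^2\widehat{C}(\xi)$ benefits from the squaring of $\widehat{A}$, which both renders the summand favourably signed on the large frequencies and shaves certain error terms; when $A\neq B$ one must instead treat the genuine triple product $\sum_\xi \widehat{A}(\xi)\widehat{B}(\xi)\widehat{C}(\xi)$ by Cauchy--Schwarz on mixed sums, and controlling the resulting losses without inflating $\alpha$ past $0.0527$ is the delicate arithmetic point. Compounding this, the constant $9$ in $|A+B|\leq p-9(r+3)$ must absorb every density loss incurred during both the Vosper-Duality swap and the isoperimetric reduction (where each fragmentation step typically costs a Pl\"unnecke--Ruzsa factor of order $3$); making these constants explicit, rather than leaving them in the effectively existential form of prior work, will dictate the entire parameter budget of the argument.
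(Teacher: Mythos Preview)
Your high-level plan---deduce Theorem~\ref{thm-p-O(r)} from Theorem~\ref{thm-r-O(p)-a}, whose proof combines a Fourier core (Theorem~\ref{fouier-II-distinct}) with an isoperimetric reduction---matches the paper. But two of the three components are described in ways that would not actually work.

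First, your isoperimetric step has a genuine gap. You propose locating $A'\subseteq A$ with $|A'|$ of order $|B|$ and then applying Theorem~\ref{fouier-II-distinct} to $(A',B)$. Atoms in the Hamidoune sense are not subsets of $A$; they are external sets $X$ minimizing $|X+B|-|X|$. More importantly, even granting such an $A'$, Theorem~\ref{fouier-II-distinct} requires $\max\{|A'|,|B|\}\leq c\,p$ for an absolute constant $c\approx 0.21$, and nothing in the hypotheses of Theorem~\ref{thm-p-O(r)} bounds $|B|/p$. The paper's reduction is quite different: it takes a $9(r+3)$-atom $X$ of $B$, then a $9(r+3)$-atom $Y$ of $X$, and applies the Fourier result to $X+Y$. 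The point is that Corollary~\ref{cor-atombounds} forces $|X|,|Y|\leq 10r+O(\sqrt r)$, so the density condition for Theorem~\ref{fouier-II-distinct} becomes roughly $10r\lesssim 0.21\,p$, which is exactly the extra hypothesis in Theorem~\ref{thm-r-O(p)-a} (and which the small-doubling constant $0.0527$ is then tuned to make automatic under $|A+B|\leq p-9(r+3)$). The lifting back from $X$ to $B$ and then to $(A,B,C)$ is not ``standard fragmentation/merging'' but two applications of Huicochea's Theorem~\ref{thm-mario-apred}.

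Second, you mischaracterize Proposition~\ref{lemma-four-case1}. It is not a Fourier swap to a dual triple $(A,C,B)$; it is a purely combinatorial statement saying that if $A$ and $B$ contain large rectifying subsets $A',B'$ with $|A'|+2|B'|-4\geq |A+B|$, then the conclusion already follows. Its proof applies the integer $3k-4$ theorem iteratively to the dual sumsets $-A'+(A'+Y)^{\mathsf c}$ and $-B'+(X+B')^{\mathsf c}$ via Vosper duality. In the Fourier proof of Theorem~\ref{fouier-II-distinct} this proposition is used contrapositively: if the conclusion fails, all rectifying subsets are small, which via Lemma~\ref{lem-freiman-circle} forces $\min\{|S_A(d)|,|S_B(d)|\}$ small for every nonzero $d$, and that is fed into the identity $\sum_x S_A(x)S_B(x)\overline{S_{A+B}(x)}=|A||B|p$ together with Cauchy--Schwarz to reach a contradiction.
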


Any density assumption of the form $|A+B|\leq p-c(r+3)$, for some $c\geq 1$, can be viewed as a version of Conjecture \ref{conj-crit-pair} valid with an additional  $(1-\epsilon)$-density restriction on $A+B$. Indeed, for any $\epsilon>0$, there then exists some $\alpha>0$, namely $\alpha=\frac{2\epsilon}{c}$, such that Conjecture \ref{conj-crit-pair} holds under the additional  assumptions $$|A+B|\leq |A|+(1+\alpha)|B|-3=|A|+\big(1+\frac{2\epsilon}{c}\big)|B|-3\;\und\;|A+B|\leq (1-\epsilon)p.$$ This can be seen since $A+B\neq G$ and $|A|\geq |B|$ ensures $|B|\leq \frac{p}{2}$, while $|A+B|=|A|+|B|+r\leq |A|+(1+\alpha)|B|-3$ is equivalent to $r+3\leq \alpha|B|$. Thus  $|A+B|\leq (1-\epsilon)p=p-\frac{c\alpha}{2}p\leq p-c\alpha|B|\leq p-c(r+3)$, allowing the result of the form $|A+B|\leq p-c(r+3)$ to be applied.
As such, Theorem \ref{thm-p-O(r)} allows densities arbitrarily close to $1$ at the cost of an increasingly stronger small doubling hypothesis $\alpha\leq \frac29\epsilon$ (for $\epsilon\leq .23715$).

For those that would prefer the $-3$ be removed from the small sumset hypothesis in Theorem \ref{thm-p-O(r)}, this can be derived from Theorem \ref{thm-p-O(r)}, for sufficiently large $p$,  by imposing a slightly more restrictive small doubling hypothesis. For instance, if one assumes $|A|\geq |B|\geq 3+\delta$, where $\delta=1$ if $B$ is a translate of  $A$  and otherwise $\delta=0$,  and that \be\label{titat}|A+B|=|A|+|B|+r\leq \min\{|A|+1.05|B|,\quad p-9(r+3)\},\ee then Theorem \ref{thm-p-O(r)} can be applied except when $3>(.0027)|B|\geq (.0027)(.05)^{-1}r$, implying $r\leq 55$.
However, as mentioned in the description of results for low density, Conjecture \ref{conj-crit-pair} is known when $r+2\leq \frac{5}{16}\lceil \log_2 p\rceil$. Thus for $p> 2^{182}$, the cases $r\leq 55$ are known, meaning Conjecture \ref{conj-crit-pair} holds when additionally assuming $|B|\geq 3+\delta$, \eqref{titat} and $p>2^{182}$ (the hypothesis $|B|\geq 3+\delta$ is needed to ensure \eqref{titat} implies either $|A+B|\leq |A|+2|B|-4$ or that the mentioned  $r\leq 0$ case in Conjecture \ref{conj-crit-pair} is available) .

If one wishes to remove the $-3$ from the small sumset hypothesis in Theorem \ref{thm-p-O(r)} and have a result valid without an additional assumption that $p$ be large, this can also be derived from Theorem \ref{thm-p-O(r)} by imposing a more restrictive small doubling hypothesis.
For instance,
if one assumes $|A|\geq |B|\geq 3+\delta$ as before and that \be\label{titat-b}|A+B|=|A|+|B|+r\leq \min\{|A|+1.021|B|,\quad p-9(r+3)\},\ee then Theorem \ref{thm-p-O(r)} can be applied except when $3>(.0317)|B|\geq (.0317)(.021)^{-1}r$, implying $r\leq 1$. If $|B|\leq 4$, then \eqref{titat-b} implies $r\leq 0$, in which case  Conjecture \ref{conj-crit-pair} is known. As mentioned in the description of results for small $r$, Conjecture \ref{conj-crit-pair} is also known when $r=1$ provided $|B|\geq 5$, $|A+B|\leq p-5$ and $p\geq 53$. However, for $r=1$, we have $|B|\geq (.021)^{-1}r>47$ with $2\cdot 47+1\leq |A|+|B|+r=|A+B|\leq p-9(r+3)=p-36$, ensuring $p\geq 131$, so that all extra conditions needed for the case $r=1$ hold. This means Conjecture \ref{conj-crit-pair} holds when additionally assuming $|B|\geq 3+\delta$ and \eqref{titat-b}.

\medskip

Theorem \ref{thm-p-O(r)}   has a similar  order of magnitude $|A+B|\leq p-O(r)$ density restriction as that for the result of Serra and Z\'emor \cite{serra-3k-4-plunnecke}. However, Theorem \ref{thm-p-O(r)} is valid with a significantly better small doubling constant $0.0527$ as opposed to $0.0001$, allows for general sumsets $A+B$, and does not require an added restriction that $p$ be sufficiently large. Unfortunately, it does not achieve  optimal density like the result of Serra and Z\'emor. Using Proposition \ref{lemma-four-case1}, the methods of Lev and Shkredov \cite{lev-3k-4-highenrg} easily adapt to yield a flexible high density version of their result, stated in Theorem \ref{thm-lev-sh-fixed}.
 In the final portion of the paper, we  utilize Theorems \ref{thm-r-O(p)-a} and Theorem \ref{thm-lev-sh-fixed}, as well as several other recent advances \cite{huicochea-reduction-3k-4} \cite{lev-3k-4-highenrg} \cite{Grynk-book} \cite{hyperatoms}, in order to extend and improve the methods of Serra and Z\'emor \cite{serra-3k-4-plunnecke}, resulting in Theorem \ref{thm-ideal-density}. Like the result of Serra and Z\'emor \cite{serra-3k-4-plunnecke}, this gives a version of the $3k-4$ Theorem in $\Z/p\Z$  valid in the ideal density range. However, the result here is valid for general sumsets $A+B$ rather than only $A+A$, does not require $p$ be sufficiently large, and has a noticeably  improved small doubling constant $0.01$ versus $0.0001$.  As with Theorem \ref{thm-p-O(r)}, the $-3$ can also be removed by imposing a more stringent small doubling hypothesis, replacing the bound $|A+B|\leq |A|+1.01|B|-3$ by the bounds
$$|A+B|\leq |A|+1.004|B|\;\und\; |B|\geq 3+\delta,$$ where $\delta=1$ if $B$ is a translate of either $A$ or $-\,G\setminus (A+B)$ and otherwise $\delta=0$, or alternatively by replacing the bound $|A+B|\leq |A|+1.01|B|-3$ by the bounds $$|A+B|\leq |A|+1.009|B|,\quad |B|\geq 3+\delta\quad\;\und\;p>2^{89}.$$

\begin{theorem}\label{thm-ideal-density}
Let $G=\Z/p\Z$ with $p\geq 2$ prime, let $A,\, B\subseteq G$ be nonempty subsets with $A+B\neq G$, and set $C=-\,G\setminus (A+B)$. Suppose
  $|A|\geq |B|$   and \be\nn |A+B|=|A|+|B|+r\leq \min\{|A|+1.01|B|-3,\quad p-r-3\}.\ee
Then there are arithmetic progressions $P_A,\,P_B,\,P_{C}\subseteq G$ all having  common difference  such that
\begin{align*} &X\subseteq P_X\;\und\; |P_X|\leq |X|+r+1\quad \mbox{ for all $X\in \{A,B,C\}$}.\end{align*}
\end{theorem}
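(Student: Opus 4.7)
The plan is to establish Theorem \ref{thm-ideal-density} by combining the mid-density result Theorem \ref{thm-r-O(p)-a} with an isoperimetric and hyperatom analysis adapted from Serra and Z\'emor, supplemented by Theorem \ref{thm-lev-sh-fixed} to handle the intermediate ranges. Under the hypotheses, either $|A+B|\leq p-9(r+3)$ and Theorem \ref{thm-r-O(p)-a} applies directly (the doubling hypothesis $|A+B|\leq |A|+1.01|B|-3$ is much stronger than the $1.0527|B|-3$ bound allowed there), or $|A+B|>p-9(r+3)$ and hence $|C|<9(r+3)$. Combined with the density hypothesis $|A+B|\leq p-(r+3)$, this forces $r+3\leq |C|\leq 9(r+3)-1$, so in the remaining case the complement $C$ is small and of order exactly $r$, placing us in an inherently high-density regime where the dual formulation in terms of $C$ becomes the natural one.

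Next I would adapt the Serra--Z\'emor isoperimetric attack to the general setting $A+B$. Using the Huicochea reduction from \cite{huicochea-reduction-3k-4}, I pass from the sumset $A+B$ to a companion single-set configuration with comparable doubling; the fact that $|A|$ and $|B|$ need not be close is absorbed into a slightly worse effective constant, still within the $0.01$ budget. Then, applying the $k$-isoperimetric method at a level $k$ matched to $|C|$ via the hyperatom machinery of \cite{hyperatoms} as developed in \cite{Grynk-book}, I obtain a canonical hyperatom $H\leq G$. Since $G=\Z/p\Z$ has only trivial subgroups, either $H$ is trivial, giving strong arithmetic rigidity that directly produces the arithmetic progressions covering $A$, $B$ and $C$, or a nontrivial coset-type decomposition emerges which, after quotienting, reduces the density of the configuration to the range where Theorem \ref{thm-r-O(p)-a} applies, and the structure then pulls back to yield $P_A$, $P_B$ and $P_C$ of common difference.

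To bridge the middle regime where neither alternative of the hyperatom dichotomy is sharply realised, I would invoke Theorem \ref{thm-lev-sh-fixed} together with the high-energy input from \cite{lev-3k-4-highenrg}. These estimates supply the needed structural output whenever the hyperatom analysis produces a set with moderate additive energy but not yet an arithmetic progression structure, feeding back into the rectification lemmas cited in the introduction so that the three progressions are ultimately found with a common difference. Finally, to avoid the hypothesis that $p$ be sufficiently large that is used in \cite{serra-3k-4-plunnecke}, I would dispose of the residual small-$r$ cases using Vosper's Theorem, the Hamidoune--R\"odseth and Hamidoune--Serra--Z\'emor refinements, and the logarithmic rectification threshold $|A\cup B|\leq \lceil\log_2 p\rceil$ cited in the introduction, which collectively settle all $r$ below any fixed bound independently of $p$.

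The main obstacle is the tightness of the constant $1.01$: every step---the Huicochea reduction, the hyperatom decomposition, the high-energy dichotomy, and the final application of Theorem \ref{thm-r-O(p)-a}---erodes the effective small-doubling budget, and the argument succeeds only if the accumulated losses remain within the slack $0.01$, an order of magnitude smaller than the $0.0527$ available for Theorem \ref{thm-p-O(r)}. Controlling this cumulative loss quantitatively while avoiding any asymptotic hypothesis on $p$ is the key technical challenge, and it is where the improvement over the Serra--Z\'emor constant $0.0001$ must be extracted.
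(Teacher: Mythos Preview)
Your opening reduction is sound: under the hypotheses of Theorem \ref{thm-ideal-density} one has $p\geq 202r+603$, so the side condition in Theorem \ref{thm-r-O(p)-a} is automatic and that theorem does dispose of the range $|A+B|\leq p-9(r+3)$.  The paper does not make this split explicitly, but it is a legitimate first step.

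Beyond that, the proposal does not constitute a proof.  The ``hyperatom/quotienting'' paragraph is confused: in $\Z/p\Z$ there are no nontrivial subgroups, so no coset decomposition or quotient can ever reduce the density, and the reference \cite{hyperatoms} is used in the paper only for the scalar bound $\alpha_2(B)\leq r+3$ (Theorem \ref{thm-2atombound}), not for any structural dichotomy of the kind you describe.  Likewise, the Huicochea reference \cite{huicochea-reduction-3k-4} (Theorem \ref{thm-mario-apred}) does not ``pass from $A+B$ to a single-set configuration''; it propagates a short-progression conclusion from one member of an additive trio to the others, and only once such a conclusion has already been obtained for some set.  Your plan never explains where that first short-progression conclusion comes from in the residual range $r+3\leq |C|<9(r+3)$.

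The paper's actual mechanism is quite different and is the missing idea.  One takes an $(r+3)$-atom $X$ of $A$, so $|X|\approx 2r$ by Corollary \ref{cor-atombounds}, and uses the Petridis--Pl\"unnecke bound (Theorem \ref{thm-plunnecke-petridis}) to find $A'\subseteq A$ with $|A'+nX|\leq |A'|\big(1+\frac{|X|+r}{|A|}\big)^n$ for $n\leq 8$.  Since $|A|\geq 100(r+3)$ this ratio is tiny, forcing $|2X|$, $|4X|$, $|8X|$ to be only a bounded multiple of $r$; the key step (Claim C) applies Theorem \ref{thm-r-O(p)-a} to the auxiliary sumsets $A'+nX$, not to $A+B$ itself, which is why the density constraint $p-9(r'+3)$ can be met.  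One then iterates Theorem \ref{thm-lev-sh-fixed} on $X+X$, $2X+2X$, $4X+4X$ to force $\ell_d(2X)<p/2$ (Claim D), after which the integer $3k-4$ Theorem rectifies $2X$ and $4X$, and Theorem \ref{thm-mario-apred} transfers the resulting short-progression structure first to $A'$, then to $A$, then to $B$ and $C$.  None of this iterated-doubling architecture appears in your outline, and without it there is no route from the small-atom bounds to a progression covering $A'$.
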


\section{Preliminaries}

We overview the basic background and results needed for the proofs. Let $G$ be an abelian group and let $A,B\subseteq G$. For a subset $I\subseteq \Z$ and $d\in G$, we let
$$I\cdot d=\{xd:\; x\in I\}\subseteq G.$$
We let $\mathsf H(A)=\{x\in G:\; x+A=A\}\leq G$ denote the stabilizer of $A$, which is a subgroup of $G$.
%For $A,B\subseteq G$ and $x\in g$, we let $$\mathsf r_{A+B}(x)=|(x-B)\cap A|=|(x-A)\cap B|$$ denote the number of $(a,b)\in A\times B$ with $a+b=x$.

For $x\in \Z$, let $\exp(x)=e^{2\pi i x}\in \mathbb C,$ where $i$ is the square root of $-1$, which is a complex point contained on the unit circle whose value depends only on $x$ modulo $1$.
An open half-arc of the complex unit circle is a set of the form $C=\{\exp(x): \;x\in (u,u+\frac12)\}$ for some $u\in \R$.  Given a complex number $z\in \mathbb C$, we let $\overline z$ denote its complex conjugate.
For $p\geq 2$ prime, the value of $\exp(a/p)$  depends only on the value of $a$ modulo $p$, making $\exp(a/p)$ will defined for $a\in \Z/p\Z$.
Given a subset $A\subseteq \Z/p\Z$ and $x\in \Z$, we let $S_A(x)=\Summ{a\in A}\exp(ax/p)$. See \cite[Chapter 19]{Grynk-book} for basic results connecting the exponential sums $S_A(x)$ with sumsets. In particular, the following basic lemma of Freiman (see \cite[Lemma 19.1]{Grynk-book}) provides a crucial link between exponential sums and Additive Combinatorics.

%Via the isomorphism given by $\exp(\cdot)$, we identify the unit circle in the complex plane   with the torus $\mathbb T=\R/\Z$, which we view as $[0,1)$ with addition modulo $1$. A half-open arc of length $n\in (0,1]$ is a subset of the form $A=\exp\big([u,u+n)\big)=\{\exp(x):\; x\in [u,u+n)\}$ or $A=\exp\big((u,u+n]\big)=\{\exp(x):\; x\in (u,u+n]\}$ for some $u\in \Z$. We likewise define open and closed arcs using $(m,n)$ and $[m,n]$ in place of $[m,n)$. In all cases, we let $|A|=n$ denote the length of the arc, defined in the re-scaled sense just described. In particular, an arc of length $1/2$ covers half the circle.  For simplicity of notation, we will identify the half-open arc $A$ on the complex unit circle with the corresponding interval $[m,n)$ in $\mathbb T$, and likewise for open and closed arcs.
 %If  $S=(x_1,\ldots,x_N)$ is a finite sequence of terms  $x_i\in \mathbb T$ and $A\subseteq \mathbb T$ is an arc (in other words, an interval modulo $1$), then we define $S_A=(x_i:\;i\in I)$ to be the subsequence consisting of all $x_i$ contained in $A$, so $I=\{i\in [1,N]:\; x_i\in A\}$. We let $|S|=N$ denote the length of the tuple, so $|S_T|=|I|$ counts the number of entries in $S$ contained in $T$.

\begin{lemma}\label{lem-freiman-circle}
If $z_1,\ldots, z_N\in \mathbb C$ is a sequence of  points lying on the complex unit circle  such that every
open half-arc contains at most $n$ of the terms $z_i$ for $i\in [1,N]$, then $|\Sum{i=1}{N}z_i|\leq 2n-N$.
\end{lemma}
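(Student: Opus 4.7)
The plan is a Fourier/averaging argument exploiting the simple observation that two antipodal open half-arcs together cover all but two points of the unit circle, so an upper bound $f(\alpha)\le n$ on the count inside one half-arc forces a \emph{lower} bound $f(\alpha)\ge N-n$ on the count inside the opposite one.

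First I would normalize: writing $S:=\sum_{i=1}^N z_i$, multiplying each $z_i$ by a fixed unit $e^{-i\varphi}$ rotates $S$ without affecting the hypothesis (half-arcs are mapped to half-arcs), so I may assume $S$ is real and nonnegative. Writing $z_i=e^{i\theta_i}$ this gives $|S|=S=\sum_{i=1}^N \cos\theta_i$. For $\alpha\in\R$ let $H_\alpha=\{e^{i\theta}:\theta-\alpha\in(-\tfrac{\pi}{2},\tfrac{\pi}{2})\pmod{2\pi}\}$ denote the open half-arc centered at $e^{i\alpha}$, and set $f(\alpha):=|\{i\in[1,N]:z_i\in H_\alpha\}|$. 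The hypothesis reads $f(\alpha)\le n$ for every $\alpha$. The key symmetry is
\[
H_\alpha\cap H_{\alpha+\pi}=\emptyset \quad\text{and}\quad H_\alpha\cup H_{\alpha+\pi}=S^1\setminus\{e^{i(\alpha+\pi/2)},e^{i(\alpha-\pi/2)}\},
\]
so $f(\alpha)+f(\alpha+\pi)=N$ for every $\alpha$ outside the finite set $\{\theta_i\pm\pi/2\}$. Combined with $f(\alpha+\pi)\le n$, this yields $N-n\le f(\alpha)\le n$ for almost every $\alpha$.

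Next I would compute $\int_0^{2\pi}f(\alpha)\cos\alpha\,d\alpha$ in two ways. By Fubini,
\[
\int_0^{2\pi}f(\alpha)\cos\alpha\,d\alpha
=\sum_{i=1}^N\int_{\theta_i-\pi/2}^{\theta_i+\pi/2}\cos\alpha\,d\alpha
=\sum_{i=1}^N\bigl[\sin(\theta_i+\tfrac{\pi}{2})-\sin(\theta_i-\tfrac{\pi}{2})\bigr]
=2\sum_{i=1}^N\cos\theta_i=2|S|.
\]
On the other hand, splitting the interval of integration into $\{\cos\alpha\ge 0\}$ and $\{\cos\alpha\le 0\}$ and using, respectively, the a.e.\ upper bound $f(\alpha)\le n$ on the former and the a.e.\ lower bound $f(\alpha)\ge N-n$ on the latter (both weighted by the correct sign of $\cos\alpha$ so that each inequality preserves direction), I obtain
\[
\int_0^{2\pi}f(\alpha)\cos\alpha\,d\alpha\le n\!\!\int_{\{\cos\alpha\ge 0\}}\!\!\cos\alpha\,d\alpha+(N-n)\!\!\int_{\{\cos\alpha\le 0\}}\!\!\cos\alpha\,d\alpha=2n-2(N-n)=4n-2N.
\]
Combining the two evaluations gives $2|S|\le 4n-2N$, i.e.\ $|S|\le 2n-N$, as desired.

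The only real obstacle is locating the dual lower bound for $f(\alpha)$; everything else is bookkeeping (the exceptional $\alpha$'s form a finite, hence measure-zero, set, so do not affect the integral, and as a side benefit the hypothesis automatically forces $n\ge N/2$, so the conclusion is vacuously consistent). No further machinery is required, and the same argument of course works with open replaced by closed half-arcs, at the cost of trivial endpoint bookkeeping.
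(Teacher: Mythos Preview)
Your proof is correct. The paper itself does not give a proof of this lemma; it simply cites it as \cite[Lemma 19.1]{Grynk-book}, so there is no in-paper argument to compare against.

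For what it is worth, your averaging argument---expressing $2|S|$ as $\int_0^{2\pi} f(\alpha)\cos\alpha\,d\alpha$ and then bounding $f$ above by $n$ where $\cos\alpha\ge 0$ and below by $N-n$ where $\cos\alpha\le 0$---is essentially the standard textbook proof (as found in Nathanson and in Grynkiewicz's book). The key insight, that the antipodal half-arc bound converts the upper bound $f\le n$ into the lower bound $f\ge N-n$ almost everywhere, is exactly what is needed to upgrade the trivial estimate $|S|\le n$ to the sharp one $|S|\le 2n-N$; a naive pointwise argument at a single $\alpha$ only yields the weaker bound. Your handling of the measure-zero exceptional set is fine.
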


\subsection*{Basic Sumset Results}

When $G=\Z/p\Z$ with $p\geq 2$ prime, the basic lower bound for sumsets is the Cauchy-Davenport Theorem. See \cite[Theorem 6.2]{Grynk-book}.

\begin{theorem}[Cauchy-Davenport Theorem]
\label{thm-cdt} Let $G=\Z/p\Z$ with $p\geq 2$ prime and let $A,B\subseteq G$ be nonempty. Then $|A+B|\geq \min \{p,\,|A|+|B|-1\}$.
\end{theorem}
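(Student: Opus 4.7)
The plan is a classical induction on $|B|$ using Davenport's $e$-transform. The base case $|B|=1$ is trivial: $B=\{b\}$ gives $A+B=A+b$, so $|A+B|=|A|=|A|+|B|-1$.

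For the inductive step with $|B|\geq 2$, I would first dispose of the case where $\min\{p,|A|+|B|-1\}=p$. If $|A|+|B|>p$, then for any $g\in G$ the sets $A$ and $g-B$ are subsets of $G$ with $|A|+|g-B|>|G|=p$, so they must intersect, giving $g\in A+B$. Hence $A+B=G$ and we are done. Otherwise $|A|+|B|\leq p$, and by translating (which does not affect $|A+B|$) I may assume $0\in A\cap B$. The goal is to produce subsets $A',B'\subseteq G$ with $A'+B'\subseteq A+B$, \ $|A'|+|B'|=|A|+|B|$, and $1\leq |B'|<|B|$, so that the inductive hypothesis yields $|A+B|\geq |A'+B'|\geq \min\{p,|A|+|B|-1\}$.

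The construction is Davenport's transform: pick $a\in A$ and set $A':=A\cup(a+B)$ and $B':=B\cap(A-a)$. A quick inclusion-exclusion gives $|A'|=|A|+|B|-|B'|$, hence $|A'|+|B'|=|A|+|B|$. The inclusions $B'\subseteq B$ (giving $A+B'\subseteq A+B$) and $a+B'\subseteq A$ (giving $(a+B)+B'\subseteq A+B$) together yield $A'+B'\subseteq A+B$, and $0\in B'$ since $a\in A$ and $0\in B$.

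The decisive step, and the place where primality is essential, is to select $a\in A$ so that $a+B\not\subseteq A$, which is equivalent to $|B'|<|B|$. If no such $a$ existed, then $A+B\subseteq A$, hence $A+B=A$ (the reverse inclusion follows from $0\in B$), so $A$ would be invariant under translation by every element of the subgroup $\langle B\rangle\leq \Z/p\Z$. Since $p$ is prime and $B$ contains the nonzero element coming from $|B|\geq 2$ and $0\in B$, this subgroup is all of $\Z/p\Z$, forcing $A=G$ and thus $|A|+|B|\geq p+2$, contradicting the current case. The main (and only) obstacle in this argument is precisely this selection step: everything else is bookkeeping, and it is exactly here that the hypothesis ``$p$ prime'' is unavoidable, since the result fails in a general cyclic group $\Z/n\Z$.
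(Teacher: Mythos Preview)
Your proof is correct; this is the standard Davenport $e$-transform argument by induction on $|B|$, and every step checks out, including the crucial use of primality to force $\langle B\rangle=\Z/p\Z$ when no shift $a+B$ escapes $A$. The paper itself does not prove the Cauchy--Davenport Theorem but simply cites it from the literature (specifically \cite[Theorem 6.2]{Grynk-book}), so there is no in-paper argument to compare against; your self-contained proof is a welcome addition.
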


The generalization to more general abelian groups is Kneser's Theorem \cite[Theorem 6.1]{Grynk-book} \cite[Theorem 4.1]{natbook} \cite[Theorem 5.5]{taovu}.

\begin{theorem}[Kneser's Theorem]
\label{thm-kt} Let $G$ be an abelian group and let $A,B\subseteq G$ be finite and nonempty. Then $|A+B|\geq |H+A|+|H+B|-|H|$, where $H=\mathsf H(A+B)$.
\end{theorem}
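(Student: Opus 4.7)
The plan is to prove Kneser's Theorem by induction via the Dyson $e$-transform, organized as a minimal-counterexample argument. Let $H=\mathsf H(A+B)$, and suppose for contradiction that the inequality fails on some pair $(A,B)$ chosen to minimize $|A|+|B|$ among all failing pairs. I would first dispose of the degenerate cases. If $|A|=1$ or $|B|=1$, then $A+B$ is a translate of the other set and the desired bound follows at once from $H$-periodicity. If $A$ is contained in a single $H$-coset (so $|A+H|=|H|$), then for any fixed $a\in A$ the inclusion $a+B\subseteq A+B$ together with the $H$-periodicity of $A+B$ yields $A+B\supseteq (a+B)+H$, whence $|A+B|\geq|B+H|=|A+H|+|B+H|-|H|$, contradicting the choice of $(A,B)$; the analogous argument handles $B$ lying in a single $H$-coset.

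So both $A$ and $B$ meet at least two $H$-cosets. The next step is to produce a Dyson transform that strictly reduces $|B|$. Because $A$ is not contained in $b+H$ for every $b\in B$, I can pick $e\in G$ with $B\cap(A-e)$ nonempty and properly contained in $B$, then set
$$A_e=A\cup(B+e),\qquad B_e=B\cap(A-e).$$
Standard identities give $A_e+B_e\subseteq A+B$, \ $|A_e|+|B_e|=|A|+|B|$ (via $|A\cup(B+e)|+|A\cap(B+e)|=|A|+|B|$), and $|B_e|<|B|\leq|A|\leq|A_e|$. Since the pair $(A_e,B_e)$ is no larger in total cardinality and has strictly smaller second coordinate, I can apply the induction hypothesis to it, obtaining
$$|A_e+B_e|\geq|A_e+H'|+|B_e+H'|-|H'|,$$
where $H'=\mathsf H(A_e+B_e)$.

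The main obstacle, and the heart of the proof, is to transport this bound, stated in terms of $H'$, back to the desired bound in terms of $H$. The key observation is that $A_e+B_e\subseteq A+B$, so $H'$-periodicity of $A_e+B_e$, combined with the partition $|A_e|+|B_e|=|A|+|B|$, lets me compare $|A_e+H'|+|B_e+H'|$ to $|A+H'|+|B+H'|$. I would split into two subcases. If $H'\subseteq H$, then $A+H$-periodicity lets me enlarge both $A_e+H'$ and $B_e+H'$ to $H$-cosets while summing to $|A+H|+|B+H|$ minus an overlap bounded by $|H|$, yielding the inequality directly. If $H'$ is not contained in $H$, I consider the subgroup $H''=H+H'$; then $H''$ strictly enlarges $H$, and by replacing $A,B$ with their saturations modulo $H''$ and pushing through the quotient $G/H''$, the hypothesis of the minimal counterexample is contradicted in $G/H''$, where the sumset is smaller.

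I expect the subtle point to be the handling of the case $H'\not\subseteq H$, since this is where the induction could a priori fail to close. The standard way around this, which I would follow, is a double induction: on $|A|+|B|$ for the outer step, and on $|G/H|$ (or equivalently on the number of $H$-cosets hit by $A+B$) for the transfer step, so that passing to $G/H''$ with $H''\supsetneq H$ is a legitimate induction step rather than circular. With that organization in place, the Dyson transform and the subgroup quotient together force a contradiction with the minimality of $(A,B)$, completing the proof.
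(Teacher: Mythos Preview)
The paper does not prove Kneser's Theorem; it is quoted as a standard background result with references to \cite{Grynk-book}, \cite{natbook} and \cite{taovu}. So there is no in-paper argument to compare against, and your proposal must stand on its own.

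Your overall strategy---minimal counterexample plus Dyson $e$-transform---is the classical route and does lead to a proof, but two points in your write-up are genuine gaps rather than routine details. First, the well-ordering: you declare that the counterexample minimizes $|A|+|B|$, yet the transform satisfies $|A_e|+|B_e|=|A|+|B|$ exactly, so $(A_e,B_e)$ is not strictly smaller in your chosen order and you cannot invoke minimality against it. The textbook fix is to minimize $\min\{|A|,|B|\}$ (or use a lexicographic refinement), so that the strict drop $|B_e|<|B|$ is what actually drives the induction.

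Second, and more seriously, the transfer from $H'=\mathsf H(A_e+B_e)$ back to $H=\mathsf H(A+B)$ is the heart of the proof, and your case split does not close. In the case $H'\subseteq H$ you assert that periodicity lets you compare $|A_e+H'|+|B_e+H'|$ with $|A+H|+|B+H|$, but since $B_e\subseteq B$ the inequality on the $B$-summand goes the wrong way, and $H$-periodicity of $A+B$ alone does not repair it. In the case $H'\not\subseteq H$ you propose to induct on $|G/H|$, which need not be finite. The standard proofs avoid this trap by first using the $e$-transform to prove the equivalent statement that $|A+B|<|A|+|B|$ forces $\mathsf H(A+B)$ to be nontrivial, and then deducing the full inequality $|A+B|\geq |A+H|+|B+H|-|H|$ by passing to $G/H$, where the image sumset has trivial stabilizer and the ``nontrivial stabilizer'' conclusion becomes $|\overline A+\overline B|\geq|\overline A|+|\overline B|-1$. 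Your outline has the right ingredients but mislocates where the inductive leverage comes from.
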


If $G$ is a finite abelian group and $A\subseteq G$, then  \be\label{AAcomp}(A-A^{\mathsf c})^\mathsf c=\mathsf H(A).\ee Indeed, it is immediate that $0\notin A-A^\mathsf c$, and then, since $|A|+|A^\mathsf c|=|G|$, Kneser's Theorem implies that $A-A^\mathsf c=G\setminus H$, where $H=\mathsf H(A-A^\mathsf c)$. To see $H=\mathsf H(A)$, note that if $x\in A$ with $x+H\nsubseteq A$, then $x+H$ intersects both $A$ and $A^\mathsf c$ with $|(x+H)\setminus A|+|(x+H)\setminus A^\mathsf c|=|H|$, whence $|A-A^{\mathsf c}|\geq |H+A|+|H+A^\mathsf c|-|H|\geq (|A|+|A^\mathsf c|+|H|)-|H|=|G|$, contradicting that $0\notin A-A^\mathsf c$. Thus $\mathsf H(A)\leq H=\mathsf H(A-A^\mathsf c)$ with the reverse inclusion trivial.

%The following is  the Ruzsa Triangular inequality for sumsets \cite[Theorem 1.81.]{ruzsa-alfred-book}

%\begin{theorem}[Ruzsa Triangle Inequality] \label{thm-ruzsa-tri}Let $G$ be an abelian group and let $A,B,C\subseteq G$ be finite subsets. Then \begin{align*}\frac{|B-C|}{|A|}&\leq \Big(\frac{|A-B|}{|A|}\Big)\Big(\frac{|A-C|}{|A|}\Big).\end{align*}
%\end{theorem}

%\begin{theorem}[Ruzsa Pl\"unnecke Bounds]
%\label{thm-plunnecke-ruzsa} Let $G$ be an abelian group, let  $A,\,B_1,\ldots, B_n\subseteq G$ be finite, nonempty  subsets, where $n\geq 1$, and let  $|A+B_i|=\alpha_i|A|$ for all $i\in [1,n]$. Then there is a nonempty subset $X\subseteq A$ such that  $|X+B_1+\ldots+B_n|\leq \alpha_1\cdots\alpha_n|X|$.
%\end{theorem}

The following are Petridis' version of the Ruzsa-Pl\"unnecke Bounds \cite[Theorem 1.6.1]{ruzsa-alfred-book}. Theorem \ref{thm-plunnecke-petridis}.1 is \cite[Theorem 1.5]{petridis-plunnecke} and Theorem \ref{thm-plunnecke-petridis}.2 is \cite[Theorem 3.1]{petridis-plunnecke}.
%, and Theorem \ref{thm-plunnecke-petridis}.3 follows from combining Theorem \ref{thm-plunnecke-petridis}.2 with the Ruzsa Triangle Inequality (Theorem \ref{thm-ruzsa-tri}).
In all cases, the explicit definition of $A'$ (as given below) is not found in the statement of the theorems from \cite{petridis-plunnecke} but is how $A'$ is defined in the proof of these theorems.

\begin{theorem}[Petridis' Ruzsa-Pl\"unnecke Bounds] \label{thm-plunnecke-petridis}
Let $G$ be an abelian group, let  $A,\,B\subseteq G$ be finite, nonempty  subsets and let $A'\subseteq A$ be a nonempty subset attaining the minimum $$\alpha:=\min\Big\{\frac{|A'+B|}{|A'|}:\; \emptyset\neq A'\subseteq A\Big\}\leq \frac{|A+B|}{|A|}.$$ Then the following all hold.
\begin{itemize}
\item[1.] $|C+A'+B|\leq \alpha|C+A'|$ for all finite subsets $C\subseteq G$.
\item[2.] $|A'+nB|\leq \alpha^{n}|A'|\leq\alpha^n|A|$ for all $n\geq 0$.
%\item[3.] $|nB-mB|\leq \alpha^{m+n}|A'|\leq \alpha^{m+n}|A|$ for all $m,n\geq 0$.
\end{itemize}
\end{theorem}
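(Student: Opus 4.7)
The plan is to prove part 1 by induction on $|C|$ via Petridis' covering argument, and then derive part 2 by iterating part 1 on $n$. The crucial feature exploited throughout is the minimality of $\alpha$: by definition, $|A''+B|\geq \alpha|A''|$ for every nonempty $A''\subseteq A$, and in particular for every nonempty $A''\subseteq A'$.

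For part 1, the base case $|C|=1$, say $C=\{x\}$, is immediate: $|C+A'+B|=|A'+B|=\alpha|A'|=\alpha|C+A'|$ by the defining property of $A'$. For the inductive step, write $C=C'\cup\{x\}$ with $x\notin C'$ and $|C'|=|C|-1$, and introduce the key auxiliary set
\be\nn A''=\{a\in A':\; x+a\in C'+A'\}\subseteq A'.\ee
Then $(x+A')\cap (C'+A')=x+A''$, which gives the exact identity $|C+A'|=|C'+A'|+|A'|-|A''|$. Translating this inclusion by $B$ yields $x+A''+B\subseteq (C'+A'+B)\cap (x+A'+B)$, so
\be\nn |(C'+A'+B)\cap(x+A'+B)|\geq |A''+B|.\ee
Inclusion–exclusion then gives
\be\nn |C+A'+B|\leq |C'+A'+B|+|A'+B|-|A''+B|\leq \alpha|C'+A'|+\alpha|A'|-\alpha|A''|=\alpha|C+A'|,\ee
applying the induction hypothesis to $|C'+A'+B|$, the definition of $\alpha$ and $A'$ to $|A'+B|$, and the minimality of $\alpha$ to bound $|A''+B|\geq \alpha|A''|$ (trivial when $A''=\emptyset$).

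For part 2, proceed by induction on $n$. The case $n=0$ is trivial ($A'+0\cdot B=A'$) and $n=1$ is the defining property of $A'$. For the inductive step, apply part 1 with the choice $C=nB$, which is a finite subset of $G$:
\be\nn |A'+(n+1)B|=|(nB)+A'+B|\leq \alpha|(nB)+A'|=\alpha|A'+nB|\leq \alpha\cdot \alpha^n|A'|=\alpha^{n+1}|A'|,\ee
using the inductive hypothesis at the last step, and the final inequality $\alpha^n|A'|\leq \alpha^n|A|$ follows from $A'\subseteq A$.

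The main obstacle is the combinatorial identity in part 1: one must locate the subset $A''\subseteq A'$ whose translate $x+A''$ describes precisely the overlap $(x+A')\cap(C'+A')$, so that translating by $B$ preserves the containment relation and the counting identities align perfectly with the bookkeeping required by inclusion–exclusion. Once this set is in hand, the minimality of $\alpha$ forces the inductive inequality to close with no loss, which is exactly the non-trivial strength of Petridis' approach over the classical Plünnecke graph-theoretic argument.
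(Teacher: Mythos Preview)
Your proof is correct and is precisely Petridis' original covering argument; the paper does not give its own proof but simply cites \cite[Theorems 1.5 and 3.1]{petridis-plunnecke}, so your write-up faithfully reproduces the intended argument.
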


%The following is a special case of [Lemma 8.2, with $T=\emptyset$ and $t=1$].

%\begin{lemma}
%\label{lemma-energy}
%Let $G$ be an abelian group and let $A,\,B\subseteq G$ be finite, nonempty subsets. If $\mathsf r_{A-B}(x)\leq k$ for all $x\in G$, then $|A+B|\geq \frac{|A||B|}{k}$.
%\end{lemma}

\subsection*{Additive Isomorphism}

For $d\in G$, we let $$\mbox{$\ell_d(A)$ be the minimal length of an arithmetic progression with difference $d$ containing $A$}.$$ Note $\ell_d(A)=\infty$ if no such progression exists.
%We set $$\ell(A)=\min\{\ell_d(A):\;d\in G\setminus\{0\}\}$$ for the minimal length of an arithmetic progression containing $A$ with some difference (note $\ell(A)=\infty$ if no such progression exists).
%
%Define $$\mathsf c_d(A)=|A+\{0,d\}|-|A|.$$ If we decompose $A=A_1\cup\ldots\cup A_\ell$ as a disjoint union of maximal arithmetic progressions $A_i$ with difference $d$, then $\mathsf c_d(A)$ counts how many of these progressions are not entire $\la d\ra$-cosets. From this perspective, it is clear that \be\label{compliment-comp}\mathsf c_d(A)=\mathsf c_d(A^\mathsf c).\ee
Let $G$ and $G'$ be abelian groups and let $A,B\subseteq G$ be finite and nonempty.
A \textbf{Freiman homomorphism} is a map $\varphi: A+B\rightarrow G'$ for which there are coordinate maps $\varphi_A:A\rightarrow G'$ and $\varphi_B:B\rightarrow G'$ such that $\varphi(a+b)=\varphi_A(a)+\varphi_B(b)$ for all $a\in A$ and $b\in B$. An injective Freiman homomorphism $\varphi:A+B\rightarrow G'$ defines a \textbf{Freiman isomorphism} $\varphi:A+B \rightarrow A'+B'\subseteq G'$, where $A'=\varphi_A(A)$ and $B'=\varphi_B(B)$, in which case we say the sumsets $A+B\subseteq G$ and $A'+B'\subseteq G'$ are \textbf{isomorphic}, denoted $A+B\cong A'+B'$. There is often no loss of generality to replace $A$ and $B$ with appropriate translates so that $0\in A\cap B$. Likewise for $A'$ and $B'$.  If $0\in A\cap B$ and $\varphi_A(0)=\varphi_B(0)=0$, we call $\varphi$ a \textbf{normalized} Freiman homomorphism. If $A+B\cong A'+B'$ and $0\in A\cap B$, then replacing $A'$ and $B'$ by appropriate translates  with  $0\in A'\cap B'$ (namely $-\psi_A(0)+A'$ and $-\psi_B(0)+B'$), there is a normalized Freiman isomorphism $\psi:A+B\rightarrow A'+B'$. Since any normalized Freiman homomorphism must have its coordinate maps agree on any element common to their domains, this ensures that $A,\,B\subseteq G$ are translates of each other in $G$ if and only if $A',B'\subseteq G'$ are translate of each other in $G'$, for isomorphic sumsets $A+B\cong A'+B'$.
Any group homomorphism $\varphi:G\rightarrow G'$ restricts to a Freiman homomorphism of $A+B$ which is an isomorphism when the restriction $\varphi\mid_{A+B}$ is injective, and is normalized when $0\in A\cap B$. Every sumset $A'+B'\subseteq G'$ with $0\in A'\cap B'$ has a universal ambient group $G$ containing an isomorphic sumset $A+B$ for which any normalized Freiman homomorphism $\varphi:A+B\rightarrow A'+B'$ can be extended to a group homomorphism $\overline \varphi:G\rightarrow G'$. See \cite[Chapter 20]{Grynk-book} \cite[Chapter 5.5]{taovu} for a fuller discussion of the  basic theory of Freiman homomorphisms, including the details mentioned above (the latter reference only treats $A=B$).

An example of when a well-behaved isomorphism exists is as follows.
We say that the sumset $A'+B'\subseteq G'$ \textbf{rectifies} if
 there is some $d\in G'$ such that \be\label{rect-threshold}\ell_d(A')+\ell_d(B')\leq \ord(d)+1.\ee  In such case,  $A'\subseteq a_0+[0,M-1]\cdot d$ and $B'\subseteq b_0+[0,N-1]\cdot d$ for some $a_0,b_0\in G'$ and  $M,N\geq 1$ with $M+N\leq \ord(d)+1$, and then the maps $\varphi_A:\Z\rightarrow G'$ and $\varphi_B:\Z\rightarrow G'$ given by $\varphi_A(x)=a_0+xd$ and $\varphi_B(y)= b_0+yd$ induce a Freiman isomorphism $\varphi: A+B\subseteq \Z\rightarrow A'+B'$, where $A=\varphi_A^{-1}(A')\cap [0,n-1]\subseteq \Z$ and $B=\varphi_B^{-1}(B')\cap [0,n-1]\subseteq \Z$ with $n=\ord(d)$.

\subsection*{Vosper Duality and Additive Trios} We continue with the rather crucial notion of Vosper Duality (in basic form, this can be traced to \cite{vosper}), which we will afterwards formulate in the more abstract terms of  additive trios (introduced in \cite{trio-origin}).
 If $(\{x\}\cup A)+B\neq A+B$ for all $x\in A^\mathsf c$, then we say that $A$ is \textbf{saturated} in the sumset $A+B$. The following basic observation will be crucial. Its proof is straightforward. See  \cite[Lemma 7.2]{Grynk-book}.

\begin{proposition}[Vosper Duality]\label{lemma-dual}
Let $G$ be an abelian group and let $A,\,B\subseteq G$ be subsets. Then $$-(A+B)^\mathsf c+B\subseteq -A^\mathsf c$$ with equality holding if and only if $A$ is saturated in $A+B$. When this is the case with $|A+B|=|A|+|B|+r$ and $G$ finite, then $|-(A+B)^\mathsf c+B|=|(A+B)^\mathsf c|+|B|+r$ with $-(A+B)^\mathsf c$ saturated in $-(A+B)^\mathsf c+B$.
\end{proposition}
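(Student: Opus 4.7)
The proof naturally splits into three parts: the inclusion, its equality condition, and the final size/saturation claim under the equality hypothesis.

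\smallskip

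For the inclusion $-(A+B)^\mathsf c+B\subseteq -A^\mathsf c$, I would argue by contradiction: take $z\in -(A+B)^\mathsf c$ and $b\in B$, and suppose $z+b\notin -A^\mathsf c$, i.e.\ $-(z+b)\in A$. Then $-z=-(z+b)+b\in A+B$, contradicting $-z\in(A+B)^\mathsf c$. This step is essentially the definition of the sumset and requires no hypothesis on $G$ or on $A,B$.

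\smallskip

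For the equality condition, I would work out the contrapositive of each direction as an elementary translation of quantifiers. Equality fails exactly when there exists $w\in -A^\mathsf c$ with $w\notin -(A+B)^\mathsf c+B$. Writing $x=-w\in A^\mathsf c$, the latter condition reads $x+b\notin (A+B)^\mathsf c$ for every $b\in B$, i.e.\ $x+B\subseteq A+B$, which is the same as $(\{x\}\cup A)+B=A+B$ for some $x\in A^\mathsf c$ — precisely the failure of $A$ being saturated in $A+B$. The reverse direction runs identically: any $x\in A^\mathsf c$ witnessing non-saturation produces $-x\in -A^\mathsf c\setminus(-(A+B)^\mathsf c+B)$.

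\smallskip

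For the finite case with $|A+B|=|A|+|B|+r$, equality and the inclusion already give $-(A+B)^\mathsf c+B=-A^\mathsf c$, hence $|-(A+B)^\mathsf c+B|=|A^\mathsf c|=|G|-|A|$. On the other hand $|(A+B)^\mathsf c|+|B|+r=(|G|-|A|-|B|-r)+|B|+r=|G|-|A|$, matching the claimed cardinality. Finally, to deduce that $A':=-(A+B)^\mathsf c$ is saturated in $A'+B=-A^\mathsf c$, I would apply the equivalence proved in the first two parts to the pair $(A',B)$: saturation of $A'$ is equivalent to $-(A'+B)^\mathsf c+B=-(A')^\mathsf c$. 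But $A'+B=-A^\mathsf c=(-A)^\mathsf c$, so $-(A'+B)^\mathsf c=A$; and $(A')^\mathsf c=-(A+B)$, so $-(A')^\mathsf c=A+B$. The required identity therefore collapses to $A+B=A+B$, which is trivial. No step poses a genuine obstacle — the whole proposition is a formal manipulation of complementation and negation, and the main care is only in keeping the signs straight when rewriting $-X^\mathsf c=(-X)^\mathsf c$.
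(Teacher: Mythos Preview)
Your proof is correct and complete; each of the three parts is handled by the natural elementary argument. The paper itself does not give a proof of this proposition, merely noting that it is straightforward and citing \cite[Lemma 7.2]{Grynk-book}, so your approach is precisely the kind of direct verification the author had in mind.
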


An \textbf{additive trio} in a finite abelian group $G$  is a triple $\mathcal T=(A,B,C)$ with $A,\,B,\,C\subseteq G$ \emph{nonempty} subsets satisfying $A+B+C\neq G$. Thus $z\notin A+B+C$ for some $z\in G$. In such case,  \be\label{inclusion}A\subseteq z-(B+C)^\mathsf c,\; B\subseteq z-(A+C)^\mathsf c\;\und\; C\subseteq z-(A+B)^\mathsf c,\quad\mbox{ for any $z\in (A+B+C)^\mathsf c$}.\ee  We say that $A$ is \textbf{saturated} in the trio $\mathcal T$ if $(A\cup \{x\})+B+C=G$ for every $x\in A^\mathsf c$, with $B$ and $C$ being saturated defined analogously. The trio $\mathcal T$ is \textbf{saturated} if $A$, $B$ and $C$ are each saturated in $\mathcal T$. The set $z-(B+C)^\mathsf c$ is precisely the set of those elements $x\in G$ for which $z\notin (A\cup \{x\})+B+C$. From this observation, it readily follows that $A$ is saturated in $\mathcal T$ if and only if $A=z-(B+C)^\mathsf c$ for all $z\in (A+B+C)^\mathsf c$. Likewise, equality holding in the other inclusions in \eqref{inclusion} are equivalent to $B$ or $C$ being saturated in $\mathcal T$.
As a consequence, if $A$ is saturated in $\mathcal T$, then \eqref{AAcomp} implies $A+B+C=G\setminus (z+H)$ with $H=\mathsf H(A)=\mathsf H(B+C)$. 
%Thus, if $A$ is saturated in $\mathcal T$ and  $G=\Z/p\Z$ with $p$ prime, then  there is a unique element $z\in (A+B+C)^\mathsf c$, and likewise if  $B$ or  $C$ are saturated in $\mathcal T$.
For an additive trio, we define $$r=\mathsf r\,(\mathcal T)=|G|-|A|-|B|-|C|.$$
When $G=\Z/p\Z$, the condition $A+B+C\neq G$ together with  the Cauchy-Davenport Theorem (Theorem \ref{thm-cdt}) ensures $|B+C|\geq |B|+|C|-1$, in which case \eqref{inclusion} implies $|A|\leq |G|-|B+C|\leq |G|-|B|-|C|+1$, meaning $r=\mathsf r\,(\mathcal T)\geq -1$. Note $(A,B,-(A+B)^\mathsf c)$ is always an additive trio with $$\mathsf r\big(A,B,-(A+B)^\mathsf c\big)=r\quad\mbox{ where $|A+B|=|A|+|B|+r$},$$so long as $A$ and $B$ are nonempty with $A+B\neq G$, as exhibited by the element $z=0$.
 %We then define $$\mu_\tau(A)=|A|/\big(\mathsf r(\tau)+3\big),\quad \mu_\tau(B)=|B|/\big(\mathsf r(\tau)+3\big)\quad\und\quad \mu_\tau(C)=|C|/\big(\mathsf r(\tau)+3\big).$$

If $\mathcal T=(A,B,C)$ is not saturated, we can successively saturate the sets by replacing $C$ by $C'=z-(A+B)^\mathsf c$, for any $z\in (A+B+C)^\mathsf c$, then replace $B$ with the maximal set $B'$ such that $B\subseteq B'$ and  $A+B'=A+B$. Note $B'$ exists since $A+B$ is finite.   In view of $C'=z-(A+B)^\mathsf c$, $A+B$ is the maximal subset such that $A+B+C'=(A+B)+(z-(A+B)^\mathsf c)=G\setminus (z+H)$, where $H=\mathsf H(A+B)$, which makes $B'\subseteq G$ the maximal subset with $A+B'+C'\neq G$, while $C'=z-(A+B)^\mathsf c=z-(A+B')^\mathsf c$.  It follows that $B'$ and $C'$ are saturated in the trio $(A,B',C')$. Finally, replacing $A$ with the maximal subset $A'$ such that
$A\subseteq A'$ and $A'+B'=A+B'=A+B$, we likewise obtain a saturated trio $\mathcal T'=(A',B',C')$ with $A\subseteq A'$, $B\subseteq B'$ and $C\subseteq C'$ (the latter in view of \eqref{inclusion}), which we refer to as a successive saturation of $\mathcal T$. Such a process can be done using any $z\in (A+B+C)^\mathsf c$ and any ordering of the sets $A$, $B$ and $C$.

The hypotheses of Conjecture \ref{conj-crit-pair} ensure that $\mathcal T=(A,B,C)$ is an additive trio in $G=\Z/p\Z$ with
\be\label{trio-form}r=\mathsf r(\mathcal T)\geq -1\;\und\;|X|\geq r+3+\delta_X\quad\mbox{ for every $X\in \{A,B,C\}$},\ee where $A$, $B$, $C$, $r$ and $\delta_X$ are all as defined in Conjecture \ref{conj-crit-pair}.
Conversely, given any additive trio $\mathcal T=(A,B,C)$ with $\mathsf r(A,B,C)\leq r$, we can pass to a saturated trio $(A',B',C')$ as described above and translate to w.l.o.g. assume $0\notin A'+B'+C'$. Then $A'+B'=-(C')^\mathsf c$, \ $A'+C'=-(B')^\mathsf c$, $B'+C'=-(A')^\mathsf c$ and $\mathsf r(A',B',C')\leq r-\Delta$, where $\Delta=(|A'|-|A|)+(|B'|-|B|)+(|C'|-|C|)$.  Moreover any arithmetic progression $P_{X}$ containing $X'\in \{A',B',C'\}$ with $|P_X|\leq |X'|+r-\Delta+1$ also contains the corresponding $X\in \{A,B,C\}$ with $|P_X|\leq |X|+r+1$.
Thus the hypotheses in \eqref{trio-form} are the more symmetric  equivalent formulation for the hypotheses in Conjecture \ref{conj-crit-pair}.

\subsection*{Isoperimetric Machinery}

The Ruzsa-Pl\"unnecke bounds involve multiplicative expansion, considering a subset $X\subseteq A$ that minimizes $\frac{|X+B|}{|X|}$. The isoperimetric method instead considers the additive expansion $|X+B|-|X|$. The basic setup is as follows. Let $G$ be an abelian group and let $B\subseteq G$ be a finite, nonempty subset. For an integer $k\geq 1$, we say that $B$ is \textbf{$k$-separable} if there is a finite subset $X\subseteq G$ with $|X|\geq k$ and $|(X+B)^\mathsf c|\geq k$. When this is the case, we define
\be\label{kappa-k}\kappa_k(B)=\min\{|X+B|-|X|:\; X\subseteq G,\; |X|\geq k,\; |(X+B)^\mathsf c|\geq k\}.\ee
A set $X\subseteq G$ with $|X|, \, |(X+B)^\mathsf c|\geq k$ attaining the minimum in \eqref{kappa-k} is called a \textbf{$k$-fragment} of $B$, and a minimal cardinality $k$-fragment is called a \textbf{$k$-atom}. We let $\alpha_k(B)$ denote the cardinality of a $k$-atom of $B$. By its definition, we have \be\label{iso-bound}|Y+B|\geq \min\{|G|-k+1,\;|Y|+\kappa_k(B)\}\ee for any subset $Y\subseteq G$ with $|Y|\geq k$.
If $X$ is a $k$-atom for $B$, then (by \cite[Lemma 21.1]{Grynk-book}) \be\label{HX-HXB}\mathsf H(X)=\mathsf H(X+B).\ee Letting $H=\mathsf H(X)$ and $k'=\lceil k/|H|\rceil$, it then follows (by \cite[Proposition 21.5]{Grynk-book}) that \be\label{modatoms}\mbox{$\overline B$ is $k'$-separable with $\overline X$ a $k'$-atom for $\overline B$},\ee where $\overline x$ denotes the image of $x\in G$ modulo $H$.
The fundamental property of atoms (originally due to Hamidoune) is the following (see \cite[Theorem 21.1]{Grynk-book}).

\begin{theorem}[Fundamental Theorem for Atoms]\label{atom-fund}Let $G$ be an abelian group, let $k\geq 1$, let $B\subseteq G$ be a finite, nonempty subset, and suppose $B$ is $k$-separable. Let $X$ be a $k$-atom of $B$ and let $F$ be a $k$-fragment of $B$. If $|X\cap F|\geq k$, then $X\subseteq F$. In particular, two distinct $k$-atoms intersect in  at most $k-1$ elements.
\end{theorem}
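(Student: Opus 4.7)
The strategy is to combine the submodular inequality for sumsets with the minimality of a $k$-atom. From the basic containments $(X\cap F)+B\subseteq (X+B)\cap (F+B)$ and $(X\cup F)+B=(X+B)\cup (F+B)$, together with $|X\cap F|+|X\cup F|=|X|+|F|$ and the fact that $X$ and $F$ are both $k$-fragments, one obtains the sub-additivity
$$\bigl(|(X\cap F)+B|-|X\cap F|\bigr)+\bigl(|(X\cup F)+B|-|X\cup F|\bigr)\leq 2\kappa_k(B).$$

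First I would verify that $X\cap F$ is $k$-separable: it has at least $k$ elements by hypothesis, and $((X\cap F)+B)^{\mathsf c}\supseteq (X+B)^{\mathsf c}$ has at least $k$ elements since $X$ is a $k$-fragment. Suppose for contradiction that $X\not\subseteq F$, so $X\cap F\subsetneq X$ and $|X\cap F|<|X|$. Because $X$ is a $k$-atom (a minimum-cardinality $k$-fragment), $X\cap F$ cannot itself be a $k$-fragment, whence the strict inequality $|(X\cap F)+B|-|X\cap F|>\kappa_k(B)$ must hold. Substituting this into the sub-additivity above forces $|(X\cup F)+B|-|X\cup F|<\kappa_k(B)$. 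Since $|X\cup F|\geq |X|\geq k$, the isoperimetric bound \eqref{iso-bound} then requires $|((X\cup F)+B)^{\mathsf c}|<k$; otherwise $X\cup F$ would be $k$-separable and contradict the strict inequality.

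The main obstacle is closing out this residual case $|((X\cup F)+B)^{\mathsf c}|<k$, and I would handle it via Vosper duality. Any $k$-fragment is saturated in its sumset: adjoining an element $x\notin X$ with $(X\cup\{x\})+B=X+B$ would produce a $k$-separable set of excess $\kappa_k(B)-1$, violating the definition of $\kappa_k(B)$, and the same reasoning applies to $F$. By Proposition \ref{lemma-dual} the duals $X^{\ast}:=-(X+B)^{\mathsf c}$ and $F^{\ast}:=-(F+B)^{\mathsf c}$ therefore satisfy $X^{\ast}+B=-X^{\mathsf c}$ and $F^{\ast}+B=-F^{\mathsf c}$, and a direct count confirms that both are $k$-fragments of $B$, with $|X^{\ast}\cap F^{\ast}|=|((X\cup F)+B)^{\mathsf c}|$ and $|(X^{\ast}\cup F^{\ast})+B|=|G|-|X\cap F|$. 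Applying the sub-additivity identity to the pair $(X^{\ast},F^{\ast})$ and using the residual hypothesis $|X^{\ast}\cap F^{\ast}|<k$ transfers the obstruction to the dual side; then combining the resulting inequalities with the already-established excess of $X\cap F$ produces, after unwinding the duality, a $k$-fragment of cardinality strictly less than $|X|$, contradicting that $X$ is a $k$-atom.

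The ``in particular'' assertion follows at once: if two distinct $k$-atoms $X_1$ and $X_2$ shared at least $k$ elements, then the main statement applied in turn with $(X,F)=(X_1,X_2)$ and $(X,F)=(X_2,X_1)$ would yield $X_1\subseteq X_2\subseteq X_1$, forcing $X_1=X_2$.
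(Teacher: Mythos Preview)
The paper does not prove this theorem; it simply cites \cite[Theorem 21.1]{Grynk-book}. So there is no ``paper's own proof'' to compare against, and I am evaluating your argument on its merits.

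Your submodularity setup is correct and standard: the inequality
\[
\partial(X\cap F)+\partial(X\cup F)\le \partial(X)+\partial(F)=2\kappa_k(B),
\]
where $\partial(Y):=|Y+B|-|Y|$, together with the $k$-separability of $X\cap F$ and the minimality of $|X|$, disposes of the case $|((X\cup F)+B)^{\mathsf c}|\ge k$ exactly as you describe. The observation that every $k$-fragment is saturated is also correct and is what makes the duals $X^*,F^*$ genuine $k$-fragments with $X^*+B=-X^{\mathsf c}$ and $F^*+B=-F^{\mathsf c}$.

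The problem is your handling of the residual case $|((X\cup F)+B)^{\mathsf c}|<k$ (which, note, can only occur for finite $G$). You correctly compute $|X^*\cap F^*|=|((X\cup F)+B)^{\mathsf c}|<k$ and $|((X^*\cup F^*)+B)^{\mathsf c}|=|X\cap F|\ge k$, so $X^*\cup F^*$ is $k$-separable. But your final sentence---``combining the resulting inequalities with the already-established excess of $X\cap F$ produces, after unwinding the duality, a $k$-fragment of cardinality strictly less than $|X|$''---does not identify \emph{which} set this is, and I cannot reconstruct one from what you have written. The natural candidates fail: $X^*\cap F^*$ has fewer than $k$ elements, so it is not $k$-separable; and one can check directly (using $X^*+B=-X^{\mathsf c}$, $F^*+B=-F^{\mathsf c}$) that
\[
\partial(X^*\cup F^*)=|(X+B)\cap(F+B)|-|X\cap F|\ge \partial(X\cap F)>\kappa_k(B),
\]
so $X^*\cup F^*$ is $k$-separable but \emph{not} a $k$-fragment. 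Neither set yields the contradiction you claim, and the submodularity inequality for $(X^*,F^*)$ alone does not force one. You need to supply a concrete set and verify both its $k$-separability and that its excess equals $\kappa_k(B)$; as written, this step is a genuine gap.

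The ``in particular'' deduction at the end is fine once the main statement is established.
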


The following is  a useful upper bound for the size of a $2$-atom \cite[Theorem 8]{hyperatoms}. Note, if $\la B-B\ra=H$ is a proper, finite subgroup of $G$, then the $2$-atoms of $B$ are precisely the $H$-cosets in $G$, so the hypotheses of Theorem \ref{thm-2atombound} ensure $\la B-B\ra$ is either infinite or equal to $G$. This ensures that the case $\la B-B\ra=G$ implies the general case stated in Theorem \ref{thm-2atombound}.

\begin{theorem}\label{thm-2atombound}
Let $G$ be an abelian group and let $B\subseteq G$ be a finite $2$-separable set with $r:=\kappa_2(B)-|B|$. Suppose $B$ has a $2$-atom that is not a coset. Then $\alpha_2(B)\leq r+3$.
\end{theorem}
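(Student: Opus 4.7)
The plan is a contradiction argument. Let $X$ be a $2$-atom of $B$ that is not a coset, write $k = |X| = \alpha_2(B)$, and assume for contradiction that $k \geq r+4$. The goal is to produce a $2$-fragment $Y$ of $B$ with $|Y| < k$ and $|Y \cap X| \geq 2$; the Fundamental Theorem for Atoms (Theorem \ref{atom-fund}) will then force $X \subseteq Y$, contradicting $|Y| < k$.

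The natural candidate is $Y := X \cap (X+d)$, paired with $Z := X \cup (X+d)$, for an appropriately chosen $d \notin \mathsf H(X)$. Combining the identity $|Y|+|Z|=2|X|$ with the inclusions $Y+B \subseteq (X+B) \cap ((X+B)+d)$ and $Z+B = (X+B) \cup ((X+B)+d)$ yields
\[
|Y+B| + |Z+B| \;\leq\; 2|X+B| \;=\; 2(|X|+|B|+r).
\]
Provided both $Y$ and $Z$ are $2$-separable, the lower bound $|W+B|\geq |W|+\kappa_2(B) = |W|+|B|+r$ applied to $W \in \{Y,Z\}$ forces equality throughout. In particular $Y$ becomes a $2$-fragment, and the condition $d \notin \mathsf H(X)$ ensures $|Y| < |X|$, completing the contradiction. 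The $2$-separability of $Y$ reduces to $|Y|\geq 2$, since $(Y+B)^{\mathsf c}\supseteq (X+B)^{\mathsf c}$ already has size $\geq 2$; that of $Z$ requires $|(Z+B)^{\mathsf c}| = |(X+B)^{\mathsf c}\cap ((X+B)^{\mathsf c}+d)|\geq 2$.

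The substantive step is choosing $d$ to guarantee these conditions. A natural choice is $d = b'-b$ with $b,b'\in B$ in distinct $\mathsf H(X)$-cosets, for which the elementary sumset bound $|X\cap (X+b'-b)| \geq 2|X|-|X+B|=|X|-|B|-r$ yields $|Y|\geq 2$ as soon as $|X|\geq |B|+r+2$. The principal obstacle is handling the residual cases. If $|X| < |B|+r+2$ (which, under our hypothesis $|X|\geq r+4$, forces $|B|\geq 3$), a refined energy/pigeonhole argument on the sum $\sum_{b\in B}|X\cap (X+b-b_0)|$ is needed to locate a suitable $b$. If $|X+B|$ is so close to $|G|$ that $|(Z+B)^{\mathsf c}|\geq 2$ fails directly, Vosper duality (Proposition \ref{lemma-dual}) intervenes: since $2$-atoms are saturated (adding any $y\notin X$ with $y+B\subseteq X+B$ would produce a $2$-separable $X'=X\cup\{y\}$ with $|X'+B|-|X'|=\kappa_2(B)-1$, contradicting the definition of $\kappa_2(B)$), the set $C:= -(X+B)^{\mathsf c}$ is itself a $2$-fragment with $|C+B|=|C|+|B|+r$ and $|C|\geq |X|$ (else $C$ would beat $X$ as a fragment). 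The whole argument can then be re-run with $C$ in place of $X$, where the complementary sumset $-X$ is now small and the analog of the separability condition for $Z$ becomes automatic from $|X|\geq 2$. Finally, a nontrivial stabilizer $\mathsf H(X)$ is handled via the quotient reduction \eqref{modatoms}, with care taken since a $2$-atom with nontrivial stabilizer descends to a $1$-atom of $\overline{B}$ in $G/\mathsf H(X)$.
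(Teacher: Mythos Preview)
Your plan contains a fatal circularity. You aim to find $d\notin\mathsf H(X)$ with $|Y|=|X\cap(X+d)|\geq 2$, but this is \emph{impossible}: the translate $X+d$ is itself a $2$-atom of $B$ (translation preserves both $|X+B|-|X|$ and the separability conditions), so the Fundamental Theorem for Atoms applied to the two distinct $2$-atoms $X$ and $X+d$ forces $|X\cap(X+d)|\leq 1$ whenever $d\notin\mathsf H(X)$. Thus your candidate $Y$ can never have $|Y|\geq 2$, and the intersection--union argument never gets off the ground. Your ``elementary sumset bound'' $|X\cap(X+b'-b)|\geq |X|-|B|-r$ is correct, but combined with the hard ceiling $|X\cap(X+d)|\leq 1$ it only yields $|X|\leq |B|+r+1$, not $|X|\leq r+3$. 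The ``residual cases'' you defer are therefore not technicalities to be cleaned up later---they are the entire content of the theorem, and the proposed mechanism cannot reach them.

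For comparison, the paper does not supply its own proof but cites Hamidoune's hyperatoms paper \cite{hyperatoms}. A workable route (visible in the paper's proof of Theorem~\ref{thm-katom-bound} specialized to $k=2$) is to switch roles: take a $2$-atom $Y$ of $X$ (not of $B$), pick $d\in Y\setminus\{0\}$, and combine $|X\cup(X+d)|\leq |Y+X|\leq |Y|+|X|+r$ with the FTA bound $|X\cap(X+d)|\leq 1$ to get $|X|\leq r+1+\alpha_2(X)$. The remaining work---showing $\alpha_2(X)\leq 2$ when $X$ is a $2$-atom that is not a coset---is genuinely nontrivial and is where Hamidoune's hyperatom machinery enters. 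Your sketch does not engage with this step at all.
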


When $G=\Z/p\Z$ with $|B|$ not extremely large, we can improve the bound  in Theorem \ref{thm-2atombound}.

\begin{theorem}\label{iso3}
 Let $G=\Z/p\Z$ with $p\geq 2$ prime, and let $B\subseteq G$  with $|B|\geq 2$. Suppose $B$ is $2$-separable and let  $r\geq \kappa_2(B)-|B|$ be an integer.
\begin{itemize}
%\item[1.]  $\mathsf c_d(B)\leq r+\alpha_2(B)$ for some  nonzero $d\in G$.
%\item[2.] If $|B|<p-\binom{r+4}{2}$, then $\alpha_2(B)=2$.
\item[1.] If $|B|\leq p-3r-4,$ then  $\alpha_2(B)\leq 2+\sqrt{2r+2}$.
\item[2.] If $|B|\leq p-3r-5$ and $r\geq 1$, then $\alpha_2(B)\leq \frac12+\sqrt{2r+\frac{17}{4}}$.
\end{itemize}
\end{theorem}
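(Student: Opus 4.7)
Let $X$ be a $2$-atom of $B$ with $|X| = k = \alpha_2(B)$. I may assume $k \geq 2$, else the bounds are trivial. Since $G = \Z/p\Z$ has prime order and $|X| < p$, the stabilizer $\mathsf{H}(X)$ equals $\{0\}$; in particular $X$ is not a coset, so Theorem \ref{thm-2atombound} gives the preliminary bound $k \leq r+3$. My plan is to refine this via the interaction between $X$ and its Vosper dual, calibrated against the density hypothesis.

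First I would argue that $X$ is saturated in $X+B$: any proper extension $X' \supsetneq X$ with $X'+B = X+B$ would have $|X'+B| - |X'| < \kappa_2(B)$ while still satisfying $|(X'+B)^\mathsf{c}| = |(X+B)^\mathsf{c}| \geq 2$, contradicting the definition of $\kappa_2(B)$. Proposition \ref{lemma-dual} then yields that $Y := -(X+B)^\mathsf{c}$ satisfies $Y+B = -X^\mathsf{c}$; writing $r' := \kappa_2(B) - |B| \leq r$, the set $Y$ has $|Y| = p - k - |B| - r'$ and $|Y+B| - |Y| = \kappa_2(B)$, so $Y$ is itself a $2$-fragment. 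The density hypothesis combined with $k \leq r+3$ and $r' \leq r$ then gives $|Y| \geq k - 2$ in case (i), and the sharper $|Y| \geq k - 1$ in case (ii).

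Next I would establish the key trichotomy. For each $c \in G$, the translate $X + c$ is a $2$-atom of $B$, distinct from all other such translates because $\mathsf{H}(X) = \{0\}$. Applying the Fundamental Theorem for Atoms (Theorem \ref{atom-fund}) to the $2$-atom $X + c$ and the $2$-fragment $Y$ yields
$$|(X + c) \cap Y| \in \{0,\,1,\,k\},$$
with the value $k$ occurring precisely when $X + c \subseteq Y$. Setting $N_k := |\{c \in G : X + c \subseteq Y\}|$ and $N_1 := |\{c : |(X + c) \cap Y| = 1\}|$, the double count $\sum_{c\in G} |(X + c) \cap Y| = k|Y|$ (each $y\in G$ lies in exactly $k$ translates of $X$) combines with the trichotomy to give $N_1 + k N_k = k|Y|$. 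Moreover, the $N_k$ distinct $2$-atoms $X + c \subseteq Y$ pairwise intersect in at most one element, so Bonferroni yields $N_k \cdot k - \binom{N_k}{2} \leq |Y|$.

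The hard part will be the final extraction of the stated quadratic bounds from these three ingredients: the identity $N_1 + k N_k = k|Y|$, the packing inequality $N_k(2k + 1 - N_k) \leq 2|Y|$, and the density lower bound on $|Y|$. The hypothesis is calibrated exactly so that $|Y| \geq k - 2$ (resp.\ $\geq k - 1$) is the threshold at which one can force $N_k \geq 1$ via an argument on the representation function of $X + X + B$ (equivalently $N_k = p - |X + X + B|$); once this is in hand, the packing inequality becomes active. Combining this with the counting identity and careful bookkeeping of the slack terms $r - r' \geq 0$ and $r + 3 - k \geq 0$ should then produce the bound $(k - 2)^2 \leq 2r + 2$ in case (i) and the sharper $\binom{k}{2} \leq r + 2$, equivalent to $k \leq \tfrac{1}{2} + \sqrt{2r + \tfrac{17}{4}}$, in case (ii).
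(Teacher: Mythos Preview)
The paper itself does not give a self-contained proof of this statement: it cites \cite[Theorems 21.5 and 21.6]{Grynk-book} and only explains how to tweak the constants for Part 2. So you are attempting something the paper does not, and your setup is largely sound: the saturation of $X$, the Vosper dual $Y=-(X+B)^{\mathsf c}$, the identity $(k-1)N_k = k|Y|-|Y-X|$, the trichotomy from Theorem~\ref{atom-fund}, and the packing bound $N_k k-\binom{N_k}{2}\le |Y|$ are all correct. You also correctly observe that once $|Y|\ge 2$ the atom minimality upgrades your density bound to $|Y|\ge k$.

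The gap is in the endgame, which you flag as ``the hard part'' but then do not actually carry out. Two specific problems. First, your mechanism for forcing $N_k\ge 1$ does not work: you have $N_k=p-|2X+B|$, but nothing in your ingredients bounds $|2X+B|$ from above. The density hypothesis $|B|\le p-3r-4$ only controls $|Y|$; when $|B|$ is near this threshold, $|Y|$ is of order $r$ while $p$ can be arbitrarily large, and there is no reason $2X+B\neq G$. Second, even granting $N_k\ge 1$, the packing inequality with $N_k=1$ yields only $|Y|\ge k$, which you already had; to extract a bound quadratic in $k$ you would need $N_k$ itself to be of order $k$, and no such lower bound is available from your setup.

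The argument in the book proceeds along related but different lines: rather than working with the Vosper dual $Y$ of $(X,B)$, one takes a $2$-atom $Z$ of $X$ (so two layers of atoms). The fundamental theorem makes both $X$ and $Z$ Sidon sets, so $|X+Z|$ has two competing Bonferroni lower bounds of the form $mk-\binom{m}{2}$ and $mk-\binom{k}{2}$ (with $m=|Z|$), while the atom property gives the upper bound $|X+Z|\le k+m+r$. The density hypothesis enters to guarantee that $X$ is $2$-separable with the correct parameters and to pin down $m$ relative to $k$; combining these inequalities is what produces $(k-2)^2\le 2r+2$ and $\binom{k}{2}\le r+2$ in the two cases. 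Your dual-fragment framework does not give you control over a second Sidon set, which is why the quadratic term never materializes.
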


\begin{proof}
%Part 1 is  \cite[Lemma 21.6]{Grynk-book}, Part 2 is  \cite{serra-zemor-2atomresult} \cite[Theorem 21.5]{Grynk-book}, and
Part 1 is \cite[Theorem 21.6]{Grynk-book}. Note $r\geq r':=\kappa_2(B)-|B|\geq -1$ follows from the Cauchy-Davenport Theorem (Theorem \ref{thm-cdt}).  Since $r\geq 1$, the bound given in Part 2 is always at least $3$. As a result, if $r'\leq 0$, then
Part 1 (applied with $r=r'$) yields the desired bound for Part 2. This allows us to assume $r'=r$. Then
the proof of Part 1 given in  \cite[Theorem 21.5]{Grynk-book} also derives Part 2. Note---in the notation of \cite[Theorem 21.5]{Grynk-book}, where their  $r$ equals our $r+1$---that the assumptions $|B|\leq p-3r-1$, \ $|X|\geq 5$ and $r\geq 3$ derived at the beginning of the proof get replaced by $|B|\leq p-3r-2$, \ $|X|\geq 4$ and $r\geq 2$, leading with bare minimal modification in the calculations to the conclusion $|X|\leq \frac12+\sqrt{2r+\frac94}$ stated at the end of the proof, which replacing $r$ by $r+1$ gives the bound stated here.
\end{proof}

Per the arguments in \cite[Lemma 2.3]{serra-3k-4-plunnecke}, we can obtain the following upper bound for the size of a $k$-atom in terms of the size of a $2$-atom. The result in \cite[Lemma 2.3]{serra-3k-4-plunnecke} was only given for $G=\Z/p\Z$ with $p$ prime.

\begin{theorem}\label{thm-katom-bound}
Let $G$ be a finite abelian group, let $k\geq 2$, and let $B\subseteq G$ be a $k$-separable subset with $r=\kappa_k(B)-|B|$ and $\la B-B\ra=G$. Suppose $B$ has a $k$-atom $X$ that is not a coset. Then $$\alpha_k(B)=|X|\leq \lceil k/|H|\rceil \cdot |H|-|H|+r+|H|\alpha_2(\overline X)\leq k-1+r+|H|\alpha_2(\overline X),$$ where $H=\mathsf H(X)$ and $\overline X\subseteq G/H$ denotes the image of $X$ modulo $H$.
\end{theorem}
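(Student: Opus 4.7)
The approach is a reduction to the case of trivial stabilizer in the quotient $G/H$, followed by an adaptation of the covering/deletion argument of \cite[Lemma 2.3]{serra-3k-4-plunnecke}. The second inequality in the statement is immediate: $\lceil k/|H|\rceil\cdot|H|\leq k+|H|-1$ gives $k'|H|-|H|\leq k-1$, where $k':=\lceil k/|H|\rceil$. Only the first inequality requires genuine work.

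For the reduction, set $H:=\mathsf{H}(X)$ and write $\overline{\,\cdot\,}$ for reduction modulo $H$. By \eqref{HX-HXB}, $H$ also stabilizes $X+B$, so both $|X|$ and $|X+B|$ are divisible by $|H|$ and
$$\kappa_k(B)=|X+B|-|X|=|H|\bigl(|\overline X+\overline B|-|\overline X|\bigr)=|H|\,\kappa_{k'}(\overline B).$$
By \eqref{modatoms}, $\overline X$ is a $k'$-atom of $\overline B$. The hypothesis that $X$ is not a coset yields $|\overline X|\geq 2$ and $\mathsf{H}(\overline X)=\{0\}$ in $G/H$, while $\langle\overline B-\overline B\rangle=G/H$ follows from $\langle B-B\rangle=G$. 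Setting $\overline r:=\kappa_{k'}(\overline B)-|\overline B|$ and using the elementary bound $|\overline B|\geq |B|/|H|$ gives $|H|\,\overline r\leq r$. Therefore it suffices to establish the base-case bound
$$|\overline X|\;\leq\; k'-1+\overline r+\alpha_2(\overline X)$$
in $G/H$: multiplying through by $|H|$ produces exactly the first inequality claimed.

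For the base case we may assume $H=\{0\}$, so that $X$ is a $k$-atom of $B$ with trivial stabilizer, $|X|\geq 2$, and the goal is to show $|X|\leq k-1+r+\alpha_2(X)$. Since $|X|\geq 2$ and $|(X+B)^{\mathsf c}|\geq k\geq 2$, the set $X$ is $2$-separable as an expanding set on $G$, so a $2$-atom $Y$ of $X$ exists with $|Y|=\alpha_2(X)$. Following \cite[Lemma 2.3]{serra-3k-4-plunnecke}, one uses the Fundamental Theorem for Atoms (Theorem \ref{atom-fund}) to select $|Y|-1$ elements $x_1,\ldots,x_{|Y|-1}\in X$ in an overlap-controlled manner so that $W:=X\setminus\{x_1,\ldots,x_{|Y|-1}\}$ satisfies $|W+B|\leq |X+B|-|Y|+1$. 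If $|W|\geq k$, then $W$ is itself a $k$-fragment of $B$, whence $|W+B|\geq|W|+|B|+r$; combining this with the upper bound on $|W+B|$ and the equality $|X+B|=|X|+|B|+r$ yields $|X|\leq k-1+r+\alpha_2(X)$. The case $|W|<k$ gives $|X|\leq k+\alpha_2(X)-2\leq k-1+r+\alpha_2(X)$, using $r\geq -1$ (which holds by Kneser's Theorem applied with $\mathsf{H}(X+B)=\{0\}$).

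The main obstacle is the covering/deletion step in the base case, where one must use the atom-intersection property of Theorem \ref{atom-fund} to arrange the removed elements $x_1,\ldots,x_{|Y|-1}$ so that each carries at least one ``private'' contribution to $|X+B|$ that disappears upon its removal, producing the crucial bound $|W+B|\leq |X+B|-|Y|+1$ despite the minimality of $X$ as a $k$-atom. The quotient reduction is comparatively routine, relying only on \eqref{HX-HXB}, \eqref{modatoms}, and the divisibility of $|X|$ and $|X+B|$ by $|H|$.
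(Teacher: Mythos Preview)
Your reduction to the quotient $G/H$ is essentially correct and matches the paper's approach. One small gap: you implicitly need $k'=\lceil k/|H|\rceil\geq 2$ for the base case to apply, and this requires showing $|H|<k$. The paper does this by noting that if $|H|\geq k$, then an $H$-coset contained in $X$ is already a $k$-fragment of $B$ (via Kneser's Theorem and $\mathsf H(X+B)=H$), forcing $X$ to be that coset, contrary to hypothesis.

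The real problem is your base case. Suppose $H$ is trivial and you remove $|Y|-1$ elements to form $W$ with $|W|=|X|-|Y|+1$. In the case $|W|\geq k$, your two displayed inequalities give
\[
|W|+|B|+r\;\leq\;|W+B|\;\leq\;|X+B|-|Y|+1=|X|+|B|+r-|Y|+1,
\]
which simplifies to $|X|-|Y|+1\leq |X|-|Y|+1$, a tautology; it does not yield $|X|\leq k-1+r+\alpha_2(X)$. Worse, the deletion bound $|W+B|\leq |X+B|-|Y|+1$ is never justified, and in fact minimality of the $k$-atom $X$ forces $|W+B|-|W|>\kappa_k(B)$ whenever $|W|\geq k$, so the two inequalities together would immediately contradict $|W|\geq k$. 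That would force $|W|<k$, i.e.\ $|X|\leq k+\alpha_2(X)-2$, a bound independent of $r$ that is simply false in general. Notice too that $r$ never enters your argument except through the trivial $r\geq -1$; this is a strong sign that the mechanism is wrong.

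The paper's base-case argument is quite different and uses two ingredients you omit. First, it bounds $\kappa_2(X)$ using $B$ as a test set: since $|B|\geq 2$ and $|(X+B)^{\mathsf c}|\geq k\geq 2$, one gets $\kappa_2(X)\leq |X+B|-|B|=|X|+r$; this is where $r$ enters. Second, it exploits the atom-intersection property between \emph{translates} of $X$: pick any nonzero $d$ in a $2$-atom $Y$ of $X$ (translated so $0\in Y$); then $X$ and $d+X$ are both $k$-atoms of $B$, so Theorem~\ref{atom-fund} gives $|X\cap(d+X)|\leq k-1$. Combining via inclusion--exclusion,
\[
2|X|-(k-1)\;\leq\;|X\cup(d+X)|=|\{0,d\}+X|\;\leq\;|Y+X|=|Y|+\kappa_2(X)\;\leq\;|Y|+|X|+r,
\]
which rearranges to $|X|\leq k-1+r+\alpha_2(X)$. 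No deletion from $X$ is involved; the Fundamental Theorem is applied to the pair of $k$-atoms $X$ and $d+X$, not to subsets of $X$.
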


\begin{proof}
Let $X$ be a $k$-atom of $B$, let $H=\mathsf H(X)$ and let $\overline x\in G/H$ for $x\in G$ denote the image of $x$ modulo $H$. By hypothesis,  $|X+B|=|X|+|B|+r\leq |G|-k$. Note $|X|\geq k\geq 2$ ensures that $G$ is nontrivial, which combined with the hypothesis $\la B-B\ra=G$ forces $|B|\geq 2$. Since $k\geq 2$ and $|B|\geq 2$, it follows that $X$ is $2$-separable with $$\kappa_2(X)\leq |X+B|-|B|=|X|+r.$$ Let $Y$ be a $2$-atom of $X$ translated so that $0\in Y$. Since $|Y|\geq 2$, let $d\in Y$ be any nonzero element. Thus \be\label{peasoup}|X\cup (d+X)|=|\{0,d\}+X|\leq |Y+X|=\kappa_2(X)+|Y|\leq |X|+|Y|+r.\ee Since $X$ and $d+X$ are both $k$-atoms of $B$, it follows from Theorem \ref{atom-fund} that either $d+X=X$ or $|(d+X)\cap X|\leq k-1$.

If $H=\mathsf H(X)$ is trivial,  the latter case must hold (as $d+X=X$ with $d\neq 0$ ensures that $d\in H$ with $H$ nontrivial).  Thus  $|X\cup (d+X)|=2|X|-|X\cap (d+X)|\geq 2|X|-k+1$, which combined with \eqref{peasoup} yields the desired bound $\alpha_k(B)=|X|\leq k-1+r+|Y|=k-1+r+\alpha_2(X)=k-1+r+|H|\alpha_2(\overline X)$. Therefore we may instead assume $H$ is nontrivial.

 By \eqref{HX-HXB}, we have $\mathsf H(X+B)=\mathsf H(X)=H$. Let $k'=\lceil k/|H|\rceil$. By Kneser's Theorem (Theorem \ref{thm-kt}), $|G|-k-|H|\geq |X+B|-|X|\geq |H+B|-|H|$. As result, if $|H|\geq k$, then $H$ is a $k$-fragment of $B$ with $|X+B|-|X|= |H+B|-|H|$, and then, since $|X|\geq |H|$ and $\mathsf H(X)=H$ with $X$ a $k$-atom of $B$, it follows that $X$ is an $H$-coset, contrary to hypothesis. Therefore we instead conclude that $|H|<k$, whence $k'\geq 2$.  By \eqref{modatoms}, $\overline B$ is $k'$-separable in $G/H$ with $\overline X$ a $k'$-atom that is not a coset (since $k'\geq 2$ and $H=\mathsf H(X)$) and  $$\alpha_k(B)=|X|=|H+X|=|H||\overline X|=|H|\alpha_{k'}(\overline B).$$ Thus, since $\mathsf H(\overline X)$ is trivial with $k'\geq 2$, we can apply the argument of the previous paragraph to $\overline B$ using the $k'$-atom $\overline X$ to conclude that $\alpha_{k'}(\overline B)\leq k'-1+r'+\alpha_2(\overline X)$, where $r'=\frac{|X+B|}{|H|}-|\overline X|-|\overline B|$. Consequently, $r'\cdot |H|\leq |X+B|-|X|-|B|=r$ and \begin{align}\nn\alpha_k(B)&=|H|\alpha_{k'}(\overline B)\leq  \big(k'-1+r'+\alpha_2(\overline X)\big)|H|
=\lceil k/|H|\rceil\cdot |H|-|H|+r+|H|\alpha_2(\overline X),\end{align}
as desired.\end{proof}

\begin{corollary}\label{cor-atombounds}
Let $G=\Z/p\Z$ with $p\geq 2$ prime,  let $k\geq 2$, and let $B\subseteq G$ be a $k$-separable subset with $r\geq \kappa_k(B)-|B|$ and $|B|\geq 2$.
\begin{itemize}
\item[1.] $\alpha_k(B)\leq k+2r+2$
\item[2.] If $k\leq p-5r-6$, then $\alpha_k(B)\leq k+r+1+\sqrt{2r+2}$
\item[3.] If $k\leq p-5r-7$ and $r\geq 1$, then $\alpha_k(B)\leq k+r-\frac12+\sqrt{2r+\frac{17}{4}}$
%\item[4.] If $k\leq p-\frac{r^2+9r+16}{2}-\sqrt{2(r+1)}$, then $\alpha_k(B)\leq k+r+1$.
\end{itemize}
\end{corollary}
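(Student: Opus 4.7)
The plan is to combine the reduction in Theorem~\ref{thm-katom-bound} (which expresses $\alpha_k(B)$ in terms of $\alpha_2$ of a $k$-atom $X$) with the three bounds on $\alpha_2$ supplied by Theorems~\ref{thm-2atombound} and~\ref{iso3}. The key simplification in the ambient prime-order cyclic group $G=\Z/p\Z$ is that Theorem~\ref{thm-katom-bound} collapses to a very clean inequality: the only subgroups of $G$ are $\{0\}$ and $G$, and for any $k$-atom $X$ the hypotheses $|X|\ge k\ge 2$ and $|(X+B)^{\mathsf c}|\ge k\ge 1$ force $X\neq \emptyset,G$, so $\mathsf H(X)=\{0\}$; since $|X|\ge 2$ rules out a singleton, $X$ is not a coset. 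In addition $|B|\ge 2$ forces $\la B-B\ra=G$. Thus Theorem~\ref{thm-katom-bound} applies with $|H|=1$ and $\overline X=X$, yielding the master inequality
$$\alpha_k(B)=|X|\le k-1+r+\alpha_2(X).$$

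It remains to bound $\alpha_2(X)$. Using $B$ as witness (from $|B|,|(X+B)^{\mathsf c}|\ge 2$), the set $X$ is itself $2$-separable with $\kappa_2(X)\le |X+B|-|B|\le |X|+r$; set $r':=\kappa_2(X)-|X|\le r$, noting $r'\ge -1$ by Cauchy--Davenport. For Part~1, Theorem~\ref{thm-2atombound} directly yields $\alpha_2(X)\le r'+3\le r+3$, giving $\alpha_k(B)\le k+2r+2$ unconditionally. This serves both as Part~1 and as an \emph{a priori} upper bound on $|X|$ that will fuel the bootstrap for Parts~2 and~3.

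For Part~2, the hypothesis $k\le p-5r-6$ together with the Part~1 bound gives $|X|\le k+2r+2\le p-3r-4$, which meets the density hypothesis of Theorem~\ref{iso3}.1 (applied to $X$ with the Corollary's $r$). That theorem then gives $\alpha_2(X)\le 2+\sqrt{2r+2}$, and substitution into the master inequality produces $\alpha_k(B)\le k+r+1+\sqrt{2r+2}$. For Part~3, the same bootstrap with $k\le p-5r-7$ yields $|X|\le p-3r-5$, the density hypothesis of Theorem~\ref{iso3}.2; since $r\ge 1$, the theorem applies directly (its proof absorbs the sub-case $r'\le 0$ internally, using that the bound $\tfrac12+\sqrt{2r+\tfrac{17}{4}}$ is at least $3$ whenever $r\ge 1$) and delivers $\alpha_2(X)\le \tfrac12+\sqrt{2r+\tfrac{17}{4}}$, whence $\alpha_k(B)\le k+r-\tfrac12+\sqrt{2r+\tfrac{17}{4}}$.

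The whole argument is essentially a mechanical chain of applications once the master inequality is in hand. The only real point of care is the bootstrap: Part~1 must be established first unconditionally and then used to certify that $|X|$ lies in the density ranges $p-3r-4$ (for Part~2) and $p-3r-5$ (for Part~3) where Theorem~\ref{iso3} is available. This is the main (mild) obstacle, but it is handled automatically once one notices that the gap between the trivial bound $|X|\le p-k$ and what Theorem~\ref{iso3} requires is exactly covered by the Part~1 estimate together with the assumed density hypotheses on $k$.
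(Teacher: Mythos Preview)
Your proof is correct and follows essentially the same approach as the paper's: apply Theorem~\ref{thm-katom-bound} (which in $\Z/p\Z$ collapses to $\alpha_k(B)\le k-1+r+\alpha_2(X)$ for a $k$-atom $X$), then bound $\alpha_2(X)$ via Theorem~\ref{thm-2atombound} for Part~1 and via Theorem~\ref{iso3} for Parts~2 and~3, using the Part~1 bound on $|X|$ as a bootstrap to verify the density hypotheses of Theorem~\ref{iso3}. Your write-up is in fact slightly cleaner than the paper's, which contains minor typos (writing ``$2$-atom of $B$'' where ``$k$-atom of $B$'' is intended, and ``$\kappa_2(B)$'' where ``$\kappa_k(B)$'' is meant).
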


\begin{proof}Since $B$ is $k$-separable with $k\geq 2$, it is also $2$-separable. Let $X$ be a $2$-atom of $B$, which cannot be a coset and must have $\mathsf H(X)$ trivial, both since $G=\Z/pZ$ has no proper, nontrivial subgroups. Thus  Theorem \ref{thm-katom-bound} yields \be\label{tickytack}\alpha_k(B)\leq k-1+r+\alpha_2(X).\ee Since $k\geq 2$ and $|B|\geq 2$, it follows that $X$ is $2$-separable with  $\kappa_2(X)-|X|\leq |X+B|-|B|-|X|=\kappa_2(B)-|B|\leq r$. Likewise, any $2$-atom of $X$ cannot be a coset. Thus Theorem \ref{thm-2atombound} implies $\alpha_2(X)\leq r+3$, which combined with \eqref{tickytack} yields Part 1. As a result, $|X|\leq k+2r+2$. Consequently, if $k\leq p-5r-6$, then $|X|\leq k+2r+2\leq p-3r-4$, in which case Theorem \ref{iso3}.1 implies that $\alpha_2(X)\leq 2+\sqrt{2r+2}$, which combined with \eqref{tickytack} yields Part 2.
Likewise, if $k\leq p-5r-7$ and $r\geq 1$, then $|X|\leq k+2r+2\leq p-3r-5$, in which case Theorem \ref{iso3}.2 implies that $\alpha_2(X)\leq \frac12+\sqrt{2r+\frac{17}{4}}$, which combined with \eqref{tickytack} yields Part 3.
%If $k\leq p-\frac{r^2+9r+16}{2}-\sqrt{2(r+1)}$, then $k\leq p-5r-6$, so Part 2 ensures $|X|\leq k+r+1+\sqrt{2(r+1)}\leq p-\binom{r+4}{2}-1$. Thus Theorem \ref{iso3}.2 implies that $\alpha_2(X)=2$, which combined with \eqref{tickytack} yields Part 4.
\end{proof}

\subsection*{Results Regarding the $3k-4$ Theorem}

The following proposition shows how the conclusion of Conjecture \ref{conj-crit-pair} is equivalent to various statements regarding a sumset $X+Y$ being isomorphic to a sumset of subsets of the integers.

\begin{proposition}
\label{prop-3k4-equiv}
 Let $G=\Z/p\Z$ with $p\geq 2$ prime, let $\mathcal T=(A,B,C)$ be an additive trio from $G$ with $r(\mathcal T)=r$ and  $$|X|\geq r+3+\delta_X\quad\mbox{ for all $X\in \{A,B,C\}$},$$ where $\delta_X=1$ if $X\in \{A,B,C\}$ is a translate of one of the other two sets from $A$, $B$ and $C$, and otherwise $\delta_X=0$.  Then the following are equivalent.
\begin{itemize}
\item[1.] There is some nonzero $d\in G$ such that $\ell_d(X)\leq |X| +r+1$ for all $X\in \{A,B,C\}$.
\item[2.] There is some nonzero $d\in G$ such that $\ell_d(X)+\ell_d(Y)\leq p+1$ for every $(X,Y)\in \{(A,B), (B,C), (C,A)\}$.
\item[3.] There is some nonzero $d\in G$  such that $\ell_d(X)+\ell_d(Y)\leq p+1$ for some $(X,Y)\in \{(A,B), (B,C), (C,A)\}$.
\item[4.] The sumset $X+Y$ is  isomorphic to a sumset of subsets from $\Z$ for every $(X,Y)\in \{(A,B), (B,C), (C,A)\}$.
\item[5.] The sumset $X+Y$ is  isomorphic to a sumset of subsets from $\Z$ for some $(X,Y)\in \{(A,B), (B,C), (C,A)\}$.
\end{itemize}
In case Part 1 holds, then the $d$ for Part 2 may be taken to be the $d$ from  Part 1.
If Part 3 holds and either $\ell_d(X)\leq 2|X|-2$ or $d\in G$ is chosen such that  $\ell_d(X)+\ell_d(Y)$ is minimal, then Parts 1, 2 and 3 all hold using this $d$.
\end{proposition}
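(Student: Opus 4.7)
The plan is to establish the cycle $1 \Rightarrow 2 \Rightarrow 3 \Rightarrow 5 \Rightarrow 1$, together with $2 \Rightarrow 4 \Rightarrow 5$, which closes the equivalence. The implication $1 \Rightarrow 2$ is a direct length calculation: for any pair $(X,Y) \in \{(A,B),(B,C),(C,A)\}$ with third set $Z$, summing the bounds from Part 1 yields $\ell_d(X) + \ell_d(Y) \leq |X| + |Y| + 2r + 2 = (p - r - |Z|) + 2r + 2$, and the hypothesis $|Z| \geq r + 3$ reduces this to $p - 1 \leq p + 1$; this also confirms the parenthetical remark that the $d$ of Part 1 may serve as the $d$ of Part 2. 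The implication $2 \Rightarrow 3$ is trivial. For $3 \Rightarrow 5$, applied to the specific pair of Part 3, the rectification criterion recalled in the preliminaries ($\ell_d(X) + \ell_d(Y) \leq \ord(d) + 1 = p + 1$) directly exhibits $X + Y$ as isomorphic to an integer sumset; applying the same criterion to each pair handles $2 \Rightarrow 4$, while $4 \Rightarrow 5$ is trivial.

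The main work is $5 \Rightarrow 1$. Assume, without loss of generality on the pair and with $|A| \geq |B|$, that $A + B \cong A' + B' \subseteq \Z$. The plan is to apply Theorem \ref{thm-3k4Z} to $A' + B'$ and then rectify back. Set $r_0 := |A + B| - |A| - |B|$. The trio inequality $|A + B| \leq p - |C|$ gives $r_0 \leq r$, and a short case split on whether $A$ is a translate of $B$ (which forces both $\delta_B = 1$ and $\delta = 1$ in the $\Z$-version of $3k-4$) combined with $|B| \geq r + 3 + \delta_B$ verifies the hypothesis $|A' + B'| \leq |A'| + 2|B'| - 3 - \delta$ of Theorem \ref{thm-3k4Z}. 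That theorem then produces APs $P_{A'}, P_{B'} \subseteq \Z$ of common difference $d'$ with lengths at most $|A| + r + 1$ and $|B| + r + 1$, together with an AP $P_{A' + B'} \subseteq A' + B'$ of difference $d'$ and $|P_{A'+B'}| \geq |A| + |B| - 1$.

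To transfer back to $G$, note that $|P_{A'}| + |P_{B'}| \leq p - 1 < \ord(d) + 1$ for any nonzero $d \in G$, so the universal ambient group framework for $A' + B'$ from the preliminaries extends the Freiman isomorphism $\varphi^{-1}$ and yields a $d \in G$ corresponding to $d'$, under which $P_{A'}, P_{B'}, P_{A' + B'}$ map to APs $P_A, P_B \subseteq G$ and $P_{A+B} \subseteq A + B$, all of common difference $d$, with the required length bounds and $|P_{A+B}| \geq |A| + |B| - 1$. For the third AP, pick $z \in (A + B + C)^\mathsf c$ (which exists since $\mathcal T$ is an additive trio); then $C \subseteq z - (A+B)^\mathsf c \subseteq z - (P_{A+B})^\mathsf c$. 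Since $\ord(d) = p$, the complement in $\Z/p\Z$ of an AP with difference $d$ is again an AP of difference $d$, of length $p - |P_{A+B}| \leq |C| + r + 1$, delivering the required $P_C$.

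For the refinement concerning the specific $d$ of Part 3: rectifying via $d$ and applying Theorem \ref{thm-3k4Z} in $\Z$ gives a common difference of the form $d \cdot d''$ in $G$ for some positive integer $d''$. The equality $\ell_{d \cdot d''}(X) = (\ell_d(X) - 1)/d'' + 1$, forced by the minimality of $\ell_d(X)$, then yields $d'' = 1$ under either stated condition: if $\ell_d(X) \leq 2|X| - 2$, any $d'' \geq 2$ gives $\ell_{d \cdot d''}(X) < |X|$, which is impossible; if $d$ was chosen to minimize $\ell_d(X) + \ell_d(Y)$, any $d'' \geq 2$ strictly reduces this sum. The main obstacle I anticipate is the careful bookkeeping of $\delta$ between the trio and integer formulations and the clean invocation of the universal ambient group (rather than an ad hoc construction) to transfer the common difference from $\Z$ to $G$.
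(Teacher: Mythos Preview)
Your proposal is correct and follows essentially the same route as the paper: the same cycle of implications, the same length arithmetic for $1\Rightarrow 2$, rectification for $2\Rightarrow 4$ and $3\Rightarrow 5$, and for $5\Rightarrow 1$ the same strategy of applying Theorem~\ref{thm-3k4Z} to the integer model and pushing the resulting progressions back to $G$ via a group homomorphism, with $P_C$ obtained as the complement of $P_{A+B}$. Two small points of precision to tighten: the sentence invoking the universal ambient group is slightly garbled---the bound $|P_{A'}|+|P_{B'}|\leq p-1$ is not what makes the extension work; rather, one needs (as the paper does, citing \cite[Theorem~20.2]{Grynk-book}) that the small-doubling hypothesis forces the universal ambient group of $A'+B'$ to be $\Z$, after which the normalized Freiman isomorphism extends to a group homomorphism $\Z\to G$ sending $1\mapsto d$; and in the refinement, your displayed identity should be an inequality $\ell_{d\cdot d''}(X)\leq (\ell_d(X)-1)/d''+1$ (the paper phrases this instead via a pigeonhole argument giving consecutive elements, hence $\gcd=1$), though your conclusions from it are correct.
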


\begin{proof}
$1.\Rightarrow 2.$  Let $(X,Y)\in \{(A,B), (B,C), (C,A)\}$  be arbitrary and let $Z$ be remaining set from the trio.   Then Part 1 and the definition of $\mathsf r(\mathcal T)$ imply that  $\ell_d(X)+\ell_d(Y)\leq |X|+|Y|+2r+2=p-|Z|+r+2\leq p-1$, yielding 2 (with the same $d$). The implication $2.\Rightarrow 3.$ is trivial.  The implications $2.\Rightarrow 4.$ and $3.\Rightarrow 5.$ follow from the discussion in the Additive Isomorphism Subsection. The implication $4.\Rightarrow 5.$ is trivial. To show the equivalence of Parts 1 through 5, it remains to show $5.\Rightarrow 1.$

Suppose w.l.o.g. that $A+B\cong A'+B'$ for some $A',B'\subseteq \Z$.
Let $\delta=1$ if $A'$ and $B'$ are translates of each other and otherwise let $\delta=0$. As noted in the Additive Isomorphism Subsection, we have $\delta=1$ if and only if $A$ and $B$ are translates of each other, which is equivalent to $\delta_A=\delta_B=1$ by definition of $\delta_X$.
By an appropriate choice of translations (of $A$, $B$, $C$, $A'$ and $B'$), we can w.l.o.g. assume $0\in A\cap B$ and $0\in A'\cap B'$ with $0$ taken to $0$ by the isomorphism $\varphi:A'+B'\rightarrow A+B$, and that $0\notin A+B+C$, whence \eqref{inclusion} and our hypotheses imply \begin{align}\label{Cset}&C\subseteq -(A+B)^\mathsf c,\quad\und\\ \label{wintin}&|A+B|\leq p-|C|=|A|+|B|+r\leq |A|+|B|-3+\min\{|A|-\delta_A,|B|-\delta_B\}.\end{align} By replacing $\Z$ by $\la A'-A'+B'-B'\ra$, we can w.l.o.g. assume $\gcd(A'-A'+B'-B')=1$. Since $\delta=1$ if and only if $\delta_A=\delta_B=1$, \eqref{wintin} allows us to apply Theorem \cite[Theorem 20.2]{Grynk-book} to $A'+B'$ to conclude that the universal ambient group of $A'+B'$ is $\Z$. Thus, replacing $A'+B'$ by the isomorphic sumset given in the definition of a universal ambient group (this amounts to applying an appropriate automorphism of $\Z$ to  $A'$ and $B'$ and then translating as need be), it follows that the normalized Freiman isomorphism $\varphi:A'+B'\rightarrow A+B$ extends to a group homomorphism $\varphi :\Z\rightarrow G$, say with $\varphi(1)=d\in G$. Since $\delta=1$ if and only if $\delta_A=\delta_B=1$, \eqref{wintin} also allows us to apply Theorem \ref{thm-3k4Z} to $A'+B'$ to    find  arithmetic progressions $P'_{A}$, $P'_{B}$ and $P'_{A+B}$ with difference $1$ such that $A'\subseteq P'_{A}$ and $|P'_{A}|\leq |A|+r+1$, \ $B'\subseteq P'_B$ and $|P'_B|\leq |B|+r+1$, and $P'_{A+B}\subseteq A'+B'$ and $|P'_{A+B}|\geq |A|+|B|-1$.
 Let $P_X=\varphi(P'_X)$ for $X\in \{A,B,A+C\}$. Then $A\subseteq P_A$ and $B\subseteq P_B$ with $|P_A|\leq |A|+r+1$ and $|P_B|\leq |B|+r+1$ while $P_{A+B}\subseteq A+B$ with $|P_{A+B}|\geq |A|+|B|-1$. Set $P_C=-(P_{A+B})^\mathsf c$. Then $P_{A+B}\subseteq A+B$ implies $C\subseteq -(A+B)^\mathsf c\subseteq -(P_{A+B})^\mathsf c=P_C$ (in view of \eqref{Cset}) with $|P_C|=p-|P_{A+B}|\leq p+1-|A|-|B|=|C|+r+1$ (by definition of $\mathsf r(\mathcal T)$). The progressions $P_A$, \ $P_B$ and $P_C$ each have difference $\varphi(1)=d$ and show that $\ell_d(X)\leq |X|+r+1$ for every $X\in \{A,B,C\}$, yielding 1.

 Finally, it remains to show the implication $3.\Rightarrow 1.$ allows the $d$ from Part 3 to be used in Part 1, assuming the additional constraints. To this end, suppose w.l.o.g. that $\ell_d(A)+\ell_d(B)\leq p+1$. Let $P_A,\,P_B\subseteq G$ be minimal length arithmetic progressions with difference $d$ that contain $A$ and $B$, respectively. Per the discussion from the Additive Isomorphism Subsection, we can thus translate $A$ and $B$ (so that the initial terms in $P_A$ and $P_B$ are $0$) and find subsets $A',B'\subseteq [0,p-1]\subseteq \Z$ such that $A'+B'\cong A+B$ via the homomorphism $\varphi:\Z\rightarrow G$ given by $\varphi(1)=d\in G$. If $\ell_d(X)\leq 2|X|-2$, the Pigeonhole Principle ensures that there is a pair of consecutive terms from $P_A$ contained in $A$, in which case there is a pair of consecutive terms in $A'$ with difference $1$. Thus $\gcd(A'-A')=1$, implying $\gcd(A'-A'+B'-B')=1$. On the other hand, if $\gcd(A'-A'+B'-B')=n\geq 2$, then $\ell_{nd}(A)+\ell_{nd}(B)<\ell_d(A)+\ell_d(B)$ (since $|A|,|B|\geq r+3\geq 2$). As such, regardless of whether $d\in G$ has $\ell_d(X)\leq 2|X|-2$ or $\ell_d(A)+\ell_d(B)$ minimal, we conclude in both cases that $\gcd(A'-A'+B'-B')=1$. The proof of the implication $5.\Rightarrow 1.$ now shows that Part 1 holds using $d$.
\end{proof}

We will also need the following small $r$ version of Conjecture \ref{conj-crit-pair}, which follows by combining the main result from   \cite{lev-rect-threshold} with Theorem \ref{thm-3k4Z} and using  \cite[Theorem 20.2]{Grynk-book} to ensure the Freiman isomorphism given by  \cite{lev-rect-threshold} is well-behaved.

\begin{theorem}\label{thm-smallr}
Let $G=\Z/p\Z$ with $p\geq 2$ a prime, let $A,\,B\subseteq G$ be nonempty subsets with $A+B\neq G$. Set $C=-(A+B)^\mathsf c$. Suppose $|A|\geq |B|$, \  $|A\cup B|\leq \lceil\log_2p\rceil$ and  $$|A+B|=|A|+|B|+r\leq |A|+2|B|-3-\delta,$$ where $\delta=1$ for if $A$ and $B$ are translates of each other and otherwise $\delta=0$. Then there are arithmetic progressions $P_A,\,P_B,\,P_{C}\subseteq G$ all  having  common difference  such that $X\subseteq P_X$ and $|P_X|\leq |X|+r+1$ for all $X\in \{A,B,C\}$.
\end{theorem}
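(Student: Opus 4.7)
The plan is to reduce Theorem \ref{thm-smallr} to its integer analogue (Theorem \ref{thm-3k4Z}) by producing a Freiman isomorphism $\varphi:A+B\to A'+B'$ with $A',B'\subseteq\Z$, and then invoking the implication $5\Rightarrow 1$ of Proposition \ref{prop-3k4-equiv} to transfer the integer conclusion back to $G$.

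The first step is to apply the main result of \cite{lev-rect-threshold} to the set $A\cup B\subseteq G$. The hypothesis $|A\cup B|\leq \lceil\log_2 p\rceil$ is precisely Lev's rectification threshold, and his theorem produces a nonzero $d\in G$ such that $A\cup B$, and therefore each of $A$ and $B$ individually, lies in an arithmetic progression with difference $d$ whose length is small enough that $\ell_d(A)+\ell_d(B)\leq p+1=\ord(d)+1$. As explained in the Additive Isomorphism Subsection, this rectification condition delivers a Freiman isomorphism $\varphi:A+B\to A'+B'$ with $A',B'\subseteq\Z$, so Part 5 of Proposition \ref{prop-3k4-equiv} holds for the additive trio $\mathcal T=(A,B,C)$.

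The second step is to verify the size hypotheses of Proposition \ref{prop-3k4-equiv}, namely $|X|\geq r+3+\delta_X$ for each $X\in\{A,B,C\}$. The bound $|A+B|\leq |A|+2|B|-3-\delta$ rearranges to $|B|\geq r+3+\delta$, whence $|A|\geq|B|\geq r+3$; and since $|A|,|B|\leq|A\cup B|\leq\lceil\log_2 p\rceil$, the quantity $|C|=p-|A|-|B|-r$ is extremely large compared to $r+3$, covering the $|C|$ bound for any moderately sized $p$. The subtlety of the proposition's $\delta_X$ (which flags $X$ being a translate of another member of the trio) is handled similarly: $A$ or $B$ being a translate of $C$ would force $|A|=|C|$ or $|B|=|C|$, which again collides with $|A|,|B|\leq\lceil\log_2 p\rceil$ while $|C|\approx p$, impossible except for a handful of tiny primes. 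Those residual small-$p$ cases reduce to $r\leq 1$ and are covered by Vosper's Theorem and \cite{ham-vosp+2}, as recorded in the Small $r$ portion of the introduction.

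With Part 5 of Proposition \ref{prop-3k4-equiv} and its size hypotheses in hand, the implication $5\Rightarrow 1$ yields the desired common-difference progressions $P_A$, $P_B$, $P_C$ in $G$ with the required length bounds. Internally, that implication normalizes $A'$ and $B'$ so $0\in A'\cap B'$ and $\gcd(A'-A'+B'-B')=1$, invokes \cite[Theorem 20.2]{Grynk-book} to identify the universal ambient group of $A'+B'$ with $\Z$ so that $\varphi$ extends to a group homomorphism $\overline\varphi:\Z\to G$, applies Theorem \ref{thm-3k4Z} to $A'+B'$ to manufacture integer progressions $P'_A$, $P'_B$, $P'_{A+B}$, and pushes them forward through $\overline\varphi$ before setting $P_C=-(P_{A+B})^\mathsf c$. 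Compatibility of the $\delta$ in Theorem \ref{thm-3k4Z} with our $\delta$ is guaranteed by the fact, also recorded in the Additive Isomorphism Subsection, that translation-equivalence of summands is preserved under normalized Freiman isomorphism. The only genuine obstacle in this strategy is the black-box invocation of Lev's quantitative rectification bound; everything else amounts to matching hypotheses and bookkeeping.
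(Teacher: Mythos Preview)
Your approach is essentially the paper's: apply Lev's rectification result to obtain a Freiman isomorphism $A+B\cong A'+B'\subseteq\Z$, use \cite[Theorem~20.2]{Grynk-book} to realize it via a group homomorphism $\Z\to G$, apply Theorem~\ref{thm-3k4Z} to $A'+B'$, and push the resulting progressions forward (with $P_C=-(P_{A+B})^{\mathsf c}$). The paper does exactly this, phrasing the last step as ``the same argument as for Proposition~\ref{prop-3k4-equiv}'' rather than citing the proposition itself.

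The one wrinkle in your write-up is the detour through verifying the $\delta_X$ hypotheses of Proposition~\ref{prop-3k4-equiv}. That proposition requires $|X|\geq r+3+\delta_X$ for all $X\in\{A,B,C\}$, where $\delta_X$ also flags translates involving $C$; your treatment of the case ``$B$ is a translate of $C$'' via size considerations and then small-$p$ fallback to \cite{ham-vosp+2} is shaky, since that $r=1$ result needs $p\geq 53$ and does not cover the tiny primes you are trying to mop up. The paper sidesteps this entirely: rather than invoking Proposition~\ref{prop-3k4-equiv} as a black box, it reproduces the $5\Rightarrow 1$ argument directly, and that argument (which you correctly summarize in your final paragraph) only needs the hypothesis $|A+B|\leq |A|+2|B|-3-\delta$ with $\delta$ exactly as in Theorem~\ref{thm-smallr}, not the stronger $\delta_X$ conditions. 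So drop the second paragraph of your proposal and keep the direct argument you already sketched; then your proof and the paper's coincide.
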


\begin{proof}
Since $|A\cup B|\leq \lceil\log_2|A|\rceil$, it follows from  \cite{lev-rect-threshold} (see also \cite[Theorem 20.4]{Grynk-book}) that $A+B$ is isomorphic to sumset of subsets of the integers, say $A+B\cong A'+B'$ with $A',\,B'\subseteq \Z$. Then, as in the proof of Proposition \ref{prop-3k4-equiv}, we can apply \cite[Theorem 20.2]{Grynk-book} to conclude that the universal ambient group of $A'+B'$ is $\Z$, allowing us to w.l.o.g. assume (translating sets appropriately) that there is a Freiman isomorphism $\varphi:A'+B'\rightarrow A+B$ that extends to a group homomorphism $\varphi:\Z\rightarrow G$, say with $\varphi(1)=d\in G$. Applying Theorem \ref{thm-3k4Z} to $A'+B'$ and applying $\varphi$ to the resulting progressions then yields the needed arithmetic progressions with difference $d$ using the same argument as for  Proposition \ref{prop-3k4-equiv}.
\end{proof}

The following is a very useful result of Huicochea \cite{huicochea-reduction-3k-4} that shows that, under some additional constraints, the weaker conclusion that one of the sets $X\in \{A,B,C\}$ is contained in a short progression  is also equivalent to the statements given in Proposition \ref{prop-3k4-equiv}. This means it is often sufficient to cover only one of the three sets in an additive trio by a short length arithmetic progression in order to establish the desired conclusion in Conjecture \ref{conj-crit-pair}.

\begin{theorem}\label{thm-mario-apred}
 Let $G=\Z/p\Z$ with $p\geq 2$ prime, let $(A,B,C)$ be an additive trio from $G$, and let $r,\,h\in\Z$ be integers.% and  $$|X|\geq r+3+\delta_X\quad\mbox{ for all $X\in \{A,B,C\}$},$$ where $\delta_X=1$ if $X$ is a translate of one of the other two sets from $\{A,B,C\}$ and otherwise $\delta_X=0$.
\begin{itemize}
\item[1.] If $\ell_d(A)\leq |A|+h$ for some  $d\in G\setminus \{0\}$, \ $r(A,B,C)\leq r$, \  $|A|\geq r+3+h$ and $|B|\geq r+3+2h$ with strict inequality in one of these estimates,  and $|C|\geq r+3$, then  $\ell_d(A)\leq |A|+r+1$, \ $\ell_d(B)\leq |B|+r+1$ and $\ell_d(C)\leq |C|+r+1$.
\item[2.] If $\ell_d(A)\leq |A|+h$ for some $d\in G\setminus\{0\}$, \ $r(A,B,C)\leq h-1$, \ $h+2\leq |A|\leq \max\{|B|,|C|\}$, \ $|B|,|C|\geq h+3$ and $35h+10\leq p$, then $\ell_d(B)\leq |B|+h$ and $\ell_d(C)\leq |C|+h$.
\end{itemize}
\end{theorem}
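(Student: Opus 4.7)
The plan is to use the hypothesis $\ell_d(A)\leq |A|+h$ to rectify the sumset $A+B$, and then invoke the integer $3k$--$4$ Theorem (Theorem \ref{thm-3k4Z}) together with Proposition \ref{prop-3k4-equiv}. Translate so the initial term of the length-$\ell_d(A)$ progression with difference $d$ containing $A$ is $0$; the group homomorphism $\varphi:\Z\to G$ with $\varphi(1)=d$ is then injective on $[0,p-1]$, and $A':=\varphi^{-1}(A)\cap[0,|A|+h-1]$ is an integer subset with $|A'|=|A|$ and $\diam(A')\leq |A|+h-1$. Once we have $\ell_d(A)+\ell_d(B)\leq p+1$, the Additive Isomorphism Subsection produces a Freiman isomorphism $A+B\cong A'+B'$ with $B'\subseteq\Z$, Theorem \ref{thm-3k4Z} applied to $A'+B'$ yields integer progressions of difference $1$ covering $A'$ and $B'$ and contained in $A'+B'$, and pushing these forward by $\varphi$ gives the required $d$-progressions in $G$, with the covering of $C=-(A+B)^{\mathsf c}$ coming either from Proposition \ref{prop-3k4-equiv} or by complementation.

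The crux, for Part 1, is thus to force $\ell_d(B)\leq p+1-\ell_d(A)$. I would argue by contradiction. If $B$ cannot be covered by a $d$-progression of length $\leq p-|A|-h+1$, then the $d$-shifts of $B$ spread across $G$. Since $A\subseteq P_A$ implies $A+B\subseteq P_A+B$, the Cauchy--Davenport Theorem (Theorem \ref{thm-cdt}) gives $|P_A+B|\geq |A|+|B|+h-1$, while $|A+B|\leq |A|+|B|+r$; hence $(P_A\setminus A)+B$ must contribute at least $h-r-1$ elements outside $A+B$. Combining this defect count with iterated Pl\"unnecke--Petridis bounds (Theorem \ref{thm-plunnecke-petridis}) for the ratio $|A+B|/|A|$ applied to subsets of $P_A$, and exploiting the strict inequality in one of $|A|\geq r+3+h$ or $|B|\geq r+3+2h$ to close the counting margin, one forces $B$ itself to cluster along a short $d$-progression of the required length, completing the rectification.

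For Part 2 the scheme is parallel, with $r$ replaced by $h-1$ so that the integer progressions output by Theorem \ref{thm-3k4Z} automatically satisfy the claimed length bound $|X|+h$. The density hypothesis $p\geq 35h+10$ is what guarantees that the sets are small enough relative to $p$ for the isoperimetric machinery (Theorem \ref{iso3} and Corollary \ref{cor-atombounds}) to supply effective $k$-atom size estimates and that wrap-around obstructions to rectification do not arise; the asymmetric role of $A$---allowed to be the smallest of the three sets, subject only to $|A|\geq h+2$---is compatible with the rectification step because the length of $P_A$ is what governs the injectivity window of $\varphi$. The main obstacle, in both parts, is precisely the rectification step---transferring the near-AP structure of $A$ onto $B$ from only the small-sumset and cardinality hypotheses---which is where the technical weight of Huicochea's argument \cite{huicochea-reduction-3k-4} resides.
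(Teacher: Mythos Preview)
Your high-level framework---reduce to the rectification condition $\ell_d(A)+\ell_d(B)\leq p+1$ and then invoke Proposition~\ref{prop-3k4-equiv} (specifically the implication $3.\Rightarrow 1.$, using $\ell_d(A)\leq |A|+h\leq 2|A|-2$)---is exactly what the paper describes. The paper itself does not give a self-contained argument: it cites Huicochea \cite{huicochea-reduction-3k-4} directly, noting only that Proposition~\ref{prop-3k4-equiv} replaces his Corollary~2.3 and that the $h=0$ case (where $A$ is itself a progression) needs separate handling via a direct Cauchy--Davenport argument on $A+B$ and $A+C$.

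Where your proposal has a genuine gap is in the step you yourself flag as the crux: forcing $\ell_d(B)\leq p+1-\ell_d(A)$. Your sketch invokes Pl\"unnecke--Petridis bounds on subsets of $P_A$, but Theorem~\ref{thm-plunnecke-petridis} controls the \emph{size} of iterated sumsets $|A'+nB|$, not their geometric structure; there is no mechanism in that inequality that forces $B$ to sit inside a short $d$-progression. The defect count $|(P_A+B)\setminus(A+B)|\geq h-r-1$ is correct but does not by itself localize $B$. Huicochea's argument instead works more directly with the trio inclusions \eqref{inclusion}: from $B+C\subseteq z-A^{\mathsf c}$ and $A^{\mathsf c}=P_A^{\mathsf c}\cup(P_A\setminus A)$, one gets that $B+C$ lies in a $d$-progression of length $p-|P_A|$ union at most $h$ stray points, and then a careful count (using the size hypotheses on $|A|,|B|,|C|$ and the strict inequality) shows enough of $B+C$ lands in the progression part to bound $\ell_d(B)$ and $\ell_d(C)$.

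For Part~2, your appeal to the isoperimetric machinery (Theorem~\ref{iso3}, Corollary~\ref{cor-atombounds}) is misplaced: the paper treats Part~2 as a direct restatement of \cite[Theorem~1.3]{huicochea-reduction-3k-4}, and the hypothesis $p\geq 35h+10$ arises from Huicochea's explicit rectification and counting estimates, not from atom-size bounds. The isoperimetric tools in this paper are used elsewhere (to construct auxiliary atoms in the proofs of Theorems~\ref{thm-r-O(p)-a} and~\ref{thm-ideal-density}), not inside this reduction lemma.
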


\begin{proof}
1. The proof  is identical to that given for \cite[Theorem 1.2]{huicochea-reduction-3k-4}. One has only to use   Proposition \ref{prop-3k4-equiv} rather than \cite[Corollary 2.3]{huicochea-reduction-3k-4} and modify the calculations slightly to account for $h$ (the result in \cite{huicochea-reduction-3k-4} is only stated for $h=r+1$). Note in the derivation of (16) in \cite{huicochea-reduction-3k-4}, the cited equation (8) should be (11), and that Proposition \ref{prop-3k4-equiv} is not available for the case $h=0$, when $A$ is itself an arithmetic progression, if two of the  three sets $A$, $B$ and $C$ have size $r+3$ and are translates of each other. This case should be verified separately.  However,  if the desired conclusion fails when $A$ is an arithmetic progression, then either $|A+B|\geq |A|+|B|+r+1$ or $|A+C|\geq |A|+|C|+r+1$ can be derived (since $A+B\neq G$ and $A+C\neq G$), leading to a contradiction with $\mathsf r(A,B,C)\leq r$ via \eqref{inclusion}. Also, when using the implication $3.\Rightarrow 1.$ in Proposition \ref{prop-3k4-equiv}, the hypothesis $|A|\geq r+3+h$ ensures $\ell_d(A)\leq |A|+h\leq 2|A|-r-3\leq 2|A|-2$.

2. This is \cite[Theorem 1.3]{huicochea-reduction-3k-4} (with the notation altered via the substitution $h=r+1$).
\end{proof}

%The following is a special case of the mid-range density result \cite[Theorem 19.3]{Grynk-book} applied with $\beta=1$,  $A=B$, $\alpha=0.433$ and $\alpha=0.4303$.

%\begin{theorem}\label{thm-mid-density}
%Let $G=\Z/p\Z$ with $p\geq 2$ a prime, let $A\subseteq G$ be a nonempty subset with $2A\neq G$,  and set $C=-\,G\setminus (2A)$. Suppose $|2A|=2|A|+r$ and either
%  \begin{align}\label{hitite}&|2A|\leq \min\{2.433|A|-3,\; 0.0957\,p\}\;\mbox{ or }\;|2A|=2|A|+r\leq \min\{2.403|A|-3,\; 0.2025\,p\}.\end{align}
%Then there are arithmetic progressions $P_A,\,P_{C}\subseteq G$ both having  common difference  such that $X\subseteq P_X$ and $|P_X|\leq |X|+r+1$ for all $X\in \{A,C\}$.
%\end{theorem}

We conclude the section with the details for the construction from \cite{pablo-serra-3k4}.

\begin{proposition}\label{prop-pablo-eg-works}
Let  $n=4r+11$ with $r\geq 1$, let $G=\Z/n\Z$ and let   $A=[0,r+1]\cup \{r+3,2r+6\}\subseteq G$. Then the minimal length of an arithmetic progression containing $A$ is $2r+7$.
\end{proposition}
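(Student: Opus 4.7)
My plan is to exhibit the arithmetic progression $[0, 2r+6]$ (of length $2r+7$ and difference $d=1$) as one containing $A$, and then prove that every arithmetic progression with a different difference has length at least $2r+7$. Since $0, 1 \in A$, any AP containing $A$ must have its difference $d$ coprime to $n$ (otherwise $A$ could not fit in a single coset of $d\Z/n\Z$), so I may restrict to invertible $d$ and exploit $\ell_d(A) = \ell_1(d^{-1}A)$. For $d=1$, I directly inspect the cyclic complement $A^{\mathsf c}$, which breaks into three runs $\{r+2\}$, $[r+4, 2r+5]$, and $[2r+7, n-1]$ of lengths $1$, $r+2$, and $2r+4$ respectively; the longest has length $2r+4$, so $\ell_1(A) = n - (2r+4) = 2r+7$.

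The heart of the argument is a rigidity analysis based on the long sub-progression $[0, r+1] \subseteq A$. Suppose $A$ sits in an AP of length $\ell \leq 2r+6$ and difference $d$. Writing each $k \in [0, r+1]$ as $a + i_k d$ with $i_k \in [0, \ell-1]$ and setting $e := d^{-1} \pmod n$, the increments $i_{k+1} - i_k$ lie in $[-(\ell-1), \ell-1]$ and are all congruent to $e$ modulo $n$. Since $2 \ell \leq 4r+12 < 4r+13 = n + 2$, the two candidates $e$ and $e - n$ cannot both lie in this range, so the step is uniquely determined; reversing the AP (which replaces $d$ by $-d$ but yields the same set) if necessary, I may assume all steps equal $+e$. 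Then $(r+1)e \leq \ell - 1 \leq 2r+5$, forcing $e \leq 2 + \frac{3}{r+1}$, which leaves $e \in \{1, 2\}$ when $r \geq 3$, together with the additional possibility $e = 3$ when $r \in \{1, 2\}$.

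The remaining work is a short case check. The case $e = 1$ collapses back to $\ell_1(A) = 2r+7 > 2r+6$, a contradiction. For $e = 2$ (so $d = 2r+6 = 2^{-1}$), the containment of $r + 3 \in A$ forces its index $j_1$ to satisfy $j_1 \equiv i_0 + 2(r+3) \pmod{n}$; since $i_0 \leq \ell - 2r - 3 \leq 3$, the wrap-around branch $j_1 = i_0 + 2r + 6 - n$ would be negative, so only $j_1 = i_0 + 2r+6 \leq \ell - 1$ is possible, forcing $\ell \geq 2r+7$. The main obstacle, the residual case $e = 3$, has to be handled by hand: when $r = 1$ the element $3$ is not invertible modulo $15$, so no such $d$ exists; when $r = 2$ the step bound $(r+1)e = 9 \leq \ell - 1 \leq 9$ pins down $\ell = 10$ and $i_0 = 0$, making the AP equal to the explicit set $\{0, 13, 7, 1, 14, 8, 2, 15, 9, 3\} \subseteq \Z/19\Z$, which visibly omits $5 = r+3 \in A$. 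Combining the three cases proves that no AP of length $\leq 2r+6$ contains $A$, so the minimum length is exactly $2r+7$.
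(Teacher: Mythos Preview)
Your proof is correct and takes a genuinely different route from the paper's. The paper proves the equivalent complementary statement—that the longest arithmetic progression contained in $A^{\mathsf c}$ has length $2r+4$—by a case analysis on the difference $d\in[1,\frac{n-1}{2}]$ of that progression, splitting into the ranges $d=1$, $d\in[2,r+2]$, $d=r+3$, $d\in[r+4,2r+4]$, and $d=2r+5$, each handled by a separate ad hoc argument (integer embedding, shifted copies of $X=[0,r+1]\cup\{r+3\}$, etc.).

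Your argument instead works directly with progressions containing $A$ and exploits the long block $[0,r+1]\subseteq A$: the positions of these $r+2$ consecutive integers inside a length-$\ell$ progression force the \emph{inverse} difference $e=d^{-1}$ to satisfy $(r+1)e\le \ell-1\le 2r+5$, collapsing the entire problem to $e\in\{1,2\}$ (and $e=3$ for $r\le 2$). This is tighter and more conceptual than the paper's five-range split; the paper's approach has the mild advantage of being uniform in $r$ with no small-$r$ residuals, but your treatment of those residuals is short and clean. Both approaches ultimately hinge on the same structural feature—that $A$ contains a long interval—but you leverage it through the position map rather than through complementary gaps.
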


\begin{proof}
Note this is equivalent to showing that the maximal length of an arithmetic progression contained in \be\label{integers}A^\mathsf c=\{r+2\}\cup [r+4,2r+5]\cup [2r+7,4r+10]\ee is $2r+4=n-(2r+7)$. To this end,
let $P\subseteq A^\mathsf c$ be a maximal length arithmetic progression, say with difference $d\in [1,\frac{n-1}{2}]$. If $d=1$, then $|P|=\max\{1,r+2,2r+4\}=2r+4$ (since $r\geq -1$), as desired.

If $d\in [2,r+2]$, then $4r+10+d=n-1+d<n+r+2$, so any maximal length progression $P$ with difference $d$ contained in \eqref{integers} (considered as a subset of $\Z/n\Z$) must also  be such a maximal length progression contained in \eqref{integers} when considered instead as a subset of $\Z$. When $d=2$, to maximize its length, $P$ must then contain both $2r+5$ and $2r+7$, ensuring $4r+10\notin P$, and thus $P$ can be no larger than all odd numbers contained in $[r+2,4r+9]\subseteq \Z$. Thus  $|P|\leq \frac{3r+9}{2}\leq 2r+4$ (since $r\geq 1$), as desired.
If $d\in [3,r+2]$, then $|P|$ is no larger than the largest arithmetic progression with difference $d$ contained in $[r+2,4r+10]\subseteq \Z$. Thus  $|P|\leq \lceil\frac{3r+9}{d}\rceil\leq \lceil\frac{3r+9}{3}\rceil=r+3\leq 2r+4$ (since $r\geq -1$), as desired.

If $n=r+3=\frac{n+1}{4}$, then the argument for $d\leq r+2$ again yields the desired bound unless $4r+10\in P$. Assuming this is so, we have  $4r+10+d=r+2\in P$, followed by $r+2+d=2r+5\in P$, followed by $2r+5+d=3r+8\in P$. But then, since $3r+8+d=4r+11=0\notin A^\mathsf c$, we conclude that $3r+8$ is the final term in $P$. The term preceding $4r+10\in P$ must be $4r+10-d=3r+7\in P$, preceded by $3r+7-d=2r+4\in P$. Then, since $2r+4-d=r+1\notin A^\mathsf c$, it follows that $2r+4$ is the first term in $P$, whence $P=\{2r+4,3r+7,4r+10,r+2,2r+5,3r+8\}$ with $|P|=6\leq 2r+4$ (in view of $r\geq 1$), as desired.

If $d=\frac{n-1}{2}=2r+5$, then $P$ must be a translate of a set of the form $[1,\ell]\cup \big(\frac{n-1}{2}+[1,\ell]\big)$ or of the form $[1,\ell]\cup \big(\frac{n-1}{2}+[2,\ell]\big)$, where $\ell \geq 1$. In the former case, $A^\mathsf c$ must contain two disjoint intervals of length $\ell$, whence \eqref{integers} ensures $\ell\leq r+2$ and $|P|=2\ell\leq 2r+4$, as desired. In the later case,  $A^\mathsf c$ must contain two disjoint intervals, one of length $\ell$ and one of length $\ell-1$. Thus \eqref{integers} ensures $\ell-1\leq r+2$. If this inequality is strict, then $|P|=2\ell-1\leq 2r+3$, as desired. Otherwise, $\ell-1=r+2\geq 2$, which is only possible if $P=\big(\frac{n+1}{2}+[r+3,2r+5]\big)\cup [r+4,2r+5]=[r+4,2r+5]\cup [3r+9,4r+11]$. However this yields the contradiction $0\in P\subseteq A^\mathsf c$. It remains to consider $d\in [r+4,2r+4]$.

Let $$X=[0,r+1]\cup \{r+3\}.$$ Now $X$ is disjoint
from $A^\mathsf c$ and $(-2d+X)\cap (-d+X)=\emptyset$, the latter holding else $d\in X-X\subseteq n+[-(r+3),r+3]= [3r+8,n+r+3]$, which contradicts that $d\in [r+4,2r+4]$. Since $d\in [r+4,2r+4]$, we have
$-d+X\subseteq  [2r+7,4r+10]\subseteq A^\mathsf c$. Since $X$ is disjoint
from $A^\mathsf c$, it follows that any element from $-d+X$ contained in $P$ must be the last term in the progression $P$, ensuring at most one of the terms from $-d+X$ lies in $P$. As a result, $|P|\leq |A^\mathsf c|-|X|+1=2r+5$. If this inequality is strict, the desired bound on $|P|$ follows. Assuming otherwise, we conclude that $$P=A^\mathsf c\setminus (-d+X')$$ for some $X'\subseteq X$ with $|X'|=|X|-1=r+2$. Moreover, the unique term from the set $(-d+X)\setminus (-d+X')\subseteq [2r+7, 4r+10]$ must be the last term in $P$. This ensures that $-2d+X'$ is disjoint from $A^\mathsf c$.
Otherwise, any term $x\in (-2d+X')\cap A^\mathsf c$ must be contained in $P$ (since $P=A^\mathsf c\setminus (-d+X')$ with $-2d+X'$ and $-d+X'$ disjoint), and then $x+d\in P$ follows (as $x$ is not the last term in $P$), contradicting that $x+d\in -d+X'$ is disjoint from $P$ by definition. Since $r\geq 1$, it follows that either $-2d+X'\subseteq -2d+([0,r+1]\cup \{r+3\})$ contains two consecutive terms or else equals a three term arithmetic progression with difference $2$. Either way, it follows  from \eqref{integers}  that the only way $-2d+X'$ can be disjoint from $A^\mathsf c$ is if $-2d+X'\subseteq [0,r+3]$. However, since $X=[0,r+1]\cup \{r+3\}$ with $|X'|=|X|-1$, this is only possible if $2d\in [-2,1]\mod n$, contradicting  that $2d\in [2r+8,8r+8]\subseteq [2,n-3]$.
\end{proof}

\section{Density $1-\epsilon$}

A main contribution of \cite{pablo-grynk-3k-4-A=B} was an argument using Vosper duality to allow the \emph{combinatorial} portion of Freiman's original argument \cite{Freiman-vosp} \cite{Freiman-monograph} \cite{natbook} \cite{rodseth-2.4} to work at higher densities for $A+A$. Getting such arguments to also work for $A+B$, which we will accomplish in Proposition \ref{lemma-four-case1},  is the goal of the first part of this section. We begin with a simple consequence of the Cauchy-Davenport Theorem.

\begin{lemma}\label{lemma-cdtcor}
Let $p$ be a prime and let $X,\,Z\subseteq \Z/p\Z$ be subsets. Suppose $X$ and  $\bigcap_{x\in X}(x+Z)$ are nonempty and $Z\subset \Z/p\Z$ is proper. Then $$|\bigcap_{x\in X}(x+Z)|\leq |Z|-|X|+1.$$
\end{lemma}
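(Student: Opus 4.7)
The plan is to translate the intersection condition into a sumset containment and then apply the Cauchy--Davenport Theorem. Set $Y = \bigcap_{x \in X}(x+Z)$, which is nonempty by hypothesis. Observe that $y \in Y$ if and only if $y - x \in Z$ for every $x \in X$, which is simply to say that $Y - X \subseteq Z$.

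Since $Z$ is a proper subset of $\Z/p\Z$, we have $Y - X \subseteq Z \subsetneq \Z/p\Z$. The Cauchy--Davenport Theorem (Theorem \ref{thm-cdt}) applied to the nonempty sets $Y$ and $-X$ then yields
\[
|Z| \geq |Y - X| \geq \min\{p,\;|Y| + |X| - 1\}.
\]
Since $|Z| < p$, the minimum must be attained by $|Y| + |X| - 1$, and rearranging gives $|Y| \leq |Z| - |X| + 1$, as desired.

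There is no real obstacle here; the only thing to be careful about is that one needs $Z$ proper in order to rule out the case where Cauchy--Davenport saturates at $p$, and one needs both $X$ and $Y$ nonempty in order to invoke Cauchy--Davenport at all, both of which are given by the hypotheses.
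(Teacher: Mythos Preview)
Your proof is correct and essentially identical to the paper's: both set $Y=\bigcap_{x\in X}(x+Z)$, observe that $Y-X\subseteq Z$, and then apply the Cauchy--Davenport Theorem to conclude $|Y|+|X|-1\leq |Y-X|\leq |Z|$.
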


\begin{proof}
Let $Y=\bigcap_{x\in X}(x+Z)$. Then $X$ and $Y$ are nonempty by hypothesis.
Observe that $Y-X\subseteq Z$, for if $x\in X$ and $y\in Y=\bigcap_{x\in X}(x+Z)$, then $y=x+z$ for some $z\in Z$, whence $y-x=x+z-x=z\in Z$. Thus, since $Z\subset \Z/p\Z$ is a proper subset and $X$ and $Y$ are nonempty, we can apply the Cauchy-Davenport Theorem (Theorem \ref{thm-cdt}) to $Y-X$ to conclude $|X|+|Y|-1\leq |Y-X|\leq |Z|$, yielding the desired upper bound for $|Y|=|\bigcap_{x\in X}(x+Z)|$.
\end{proof}

\begin{proposition}
\label{lemma-four-case1}
Let $p$ be prime, let $A,\,B \subseteq \Z/p\Z$ be nonempty subsets with  $$|A+B|=|A|+|B|+r\leq \frac34(p+1)\quad\und\quad p\geq 4r+9,$$ and set $C=-(A+B)^\mathsf c$. Suppose there exist subsets $A'\subseteq A$ and $B'\subseteq B$  or $A'\subseteq B$ and $B'\subseteq A$ such that  $A'+B'$ rectifies, $|B'|\leq |A'|$ and  \be\label{rectify-large-bound-sym-b}|A'|+2|B'|-4\geq |A+B|.\ee  Then there are arithmetic progressions $P_A,\,P_B,\,P_{C}\subseteq G$ all having  common difference  such that $X\subseteq P_X$ and $|P_X|\leq |X|+r+1$ for all $X\in \{A,B,C\}$
\end{proposition}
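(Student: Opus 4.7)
The plan is to use the rectification of $A'+B'$ to identify a common difference $d\in\Z/p\Z\setminus\{0\}$, bootstrap this structure to cover all of $A$, $B$, and $C$, and finally apply Theorem \ref{thm-3k4Z} to the now-rectified sumset $A+B$. We may assume without loss of generality that $A'\subseteq A$ and $B'\subseteq B$ (the other case is handled by swapping the roles of $A$ and $B$). Since $A'+B'$ rectifies and $|A'+B'|\leq |A+B|\leq |A'|+2|B'|-4$, the isomorphic integer sumset satisfies the hypothesis of Theorem \ref{thm-3k4Z} (the bound $-4$ is at most $-3-\delta$ for $\delta\in\{0,1\}$). Pulling back the resulting integer progressions through the Freiman isomorphism yields arithmetic progressions of a common difference $d\in \Z/p\Z$: short progressions $P_{A'}\supseteq A'$ and $P_{B'}\supseteq B'$, plus a long progression $P\subseteq A'+B'\subseteq A+B$ of length at least $|A'|+|B'|-1$. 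By applying the automorphism $x\mapsto d^{-1}x$ I may assume $d=1$, so these are genuine intervals in $\Z/p\Z$. Taking cyclic complements shows that $C=-(A+B)^{\mathsf c}$ is already contained in an interval of length at most $p-|A'|-|B'|+1$.

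Next I extract size bounds on $A\setminus A'$ and $B\setminus B'$. Since $|A+B|<p$, both $A+B'$ and $A'+B$ are proper subsets of $\Z/p\Z$, so the Cauchy-Davenport Theorem combined with $|A+B'|,|A'+B|\leq |A+B|\leq |A'|+2|B'|-4$ yields $|A\setminus A'|\leq |B'|-3$ and $|B\setminus B'|\leq |B'|-3$. Writing $r'=|A'+B'|-|A'|-|B'|\geq -1$, the ``budget'' for elements of $A+B$ lying outside $A'+B'$ is at most $|A+B|-|A'+B'|\leq |B'|-4-r'$, which is extremely tight. The main combinatorial step is then to show that every $a\in A\setminus A'$ must lie close enough to $P_{A'}$ that $A$ itself is contained in a short interval of difference $1$: for each such $a$ whose shift $a+B'\subseteq A+B$ lies sufficiently outside the interval containing $A'+B'$, Cauchy-Davenport applied to the sumset of the collection of such ``far'' elements with $B'$ shows the new contribution quickly exceeds the budget, forcing all of $A$ (and symmetrically all of $B$) into short intervals with $\ell_1(A)+\ell_1(B)\leq p+1$. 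The conditions $|A+B|\leq \frac{3}{4}(p+1)$ and $p\geq 4r+9$ ensure $|C|\geq\frac{p-3}{4}$ and a favorable cyclic geometry: the arc complementary to $P$ has room to cleanly distinguish the ``interior'' (overlap with $A'+B'$) and ``exterior'' (disjoint from $A'+B'$) cases of $a+B'$.

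I expect this bootstrapping to be the main obstacle, as it requires carefully balancing interior-versus-exterior contributions of $A\setminus A'$ and $B\setminus B'$ against the tight budget, very likely invoking Vosper duality (Proposition \ref{lemma-dual}) applied to $C$ in the trio $(A,B,C)$, together with the fact that $C$ is already confined to a short arithmetic progression. Once $\ell_1(A)+\ell_1(B)\leq p+1$ is established, $A+B$ rectifies; its integer isomorph has size $|A+B|\leq |A|+2|B|-4$, so Theorem \ref{thm-3k4Z} applies directly, yielding progressions $P_A,P_B,P_{A+B}$ of common difference in $\Z$. Pulling these back through the Freiman isomorphism and setting $P_C=-(P_{A+B})^{\mathsf c}$ produces progressions of common difference $d$ in $\Z/p\Z$ that cover $A$, $B$, and $C$ with $|P_X|\leq |X|+r+1$, thereby yielding the conclusion via Proposition \ref{prop-3k4-equiv}.
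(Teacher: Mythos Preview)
Your outline correctly identifies the overall architecture---rectify $A'+B'$, extract a common difference $d$ via Theorem \ref{thm-3k4Z}, bootstrap to all of $A$ and $B$, then finish by rectifying $A+B$ itself---and this matches the paper's strategy. You also correctly anticipate that Vosper duality is central. However, the proposal leaves the decisive step unexecuted, and the mechanism you sketch for it (a ``budget'' argument forcing each $a\in A\setminus A'$ to lie near $P_{A'}$) is not the one that actually works, nor is it clear it can be made to work in this generality.

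The paper's argument differs from yours in two essential ways. First, it takes $(A',B')$ with $|A'|+|B'|$ \emph{maximal} among all pairs satisfying the hypotheses; this maximality is what drives the contradiction, not a direct proximity argument. Second, rather than tracking where individual elements of $A\setminus A'$ land, the paper applies the $3k-4$ Theorem a second time---to the \emph{dual} sumset $-A'+(A'+Y)^{\mathsf c}$ (or $-B'+(X+B')^{\mathsf c}$), where $\{X,Y\}=\{A,B\}$. By Vosper duality this sumset lies inside $Y^{\mathsf c}$; once one verifies the $3k-4$ hypotheses for it (this is where the bounds $|A+B|\le\frac34(p+1)$ and $p\ge 4r+9$ are genuinely used, via a chain of inequalities the paper labels Steps A--D), the long progression produced lies in $Y^{\mathsf c}$, forcing $\ell_d(Y)$ to be small. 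One then checks $\ell_d(A')+\ell_d(Y)\le p+1$, so $A'+Y$ rectifies with a strictly larger pair, contradicting maximality unless $A'=X$ and $B'=Y$.

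Your budget heuristic shows that $a+B'$ must overlap $A'+B'$ substantially for each $a\in A\setminus A'$, but this alone does not pin down $\ell_d(A)$: overlapping $A'+B'$ does not mean $a$ lies in or near $P_{A'}$, since $A'+B'$ can be spread over an interval of length up to $|A'|+|B'|+r'$, which may be much larger than $|P_{A'}|$. The paper circumvents this by never trying to locate individual elements; it bounds $\ell_d(Y)$ globally via the dual $3k-4$ application. If you want to salvage your approach, you would need a sharp structural statement about how overlap with $A'+B'$ constrains $a$ modulo $d$, and it is not evident how to get that without essentially reproducing the dual argument.
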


\begin{proof} We can w.l.o.g. assume $|B|\leq |A|$ and that $|A'|+|B'|$ is maximal among all subsets $A'$ and $B'$ satisfying the hypothesis of the proposition. Since $A'+B'$ rectifies, let $g\in \Z/p\Z$ be a nonzero difference with $\ell_g(A')+\ell_g(B')\leq p+1$  minimal.
 Let $$\{X,\,Y\}=\{A,\,B\}\quad\mbox{ with } \quad A'\subseteq X \und B'\subseteq Y.$$
Note $|B'|\leq |A'|$ and $A'\subseteq X$ ensure $|A'|\leq |X|\leq |A|$ and $|B'|\leq |A'|\leq |X|$, which combined with $B'\subseteq Y$ gives us $|B'|\leq \min\{|X|,|Y|\}=|B|$. In summary,  $$|A'|\leq |A|\;\und\;|B'|\leq |B|.$$
By hypothesis, $|X|\leq |A|\leq |A+B|\leq |A'|+2|B'|-4\leq |X|+2|B'|-4$, ensuring $|A'|\geq |B'|\geq 2$. Combined with the fact that $A'+B'$ rectifies, we find that $p+1\geq \ell_g(A')+\ell_g(B')\geq |A'|+|B'|\geq 4$, ensuring $p\geq 3$ is odd. If $p=3$, then, since  $A'+B'$ rectifies, it follows that $|A'|=|B'|=2$, whence $|A+B|\leq |A'|+2|B'|-4=2<p$. For $p>3$, we also have $|A+B|\leq \frac34(p+1)<p$. Thus $A+B\neq G$ in all cases.
Now $|A|+|B|+r=|A+B|\leq |A'|+2|B'|-4\leq |A|+2|B|-4$, ensuring \be\label{goup}|A|\geq |B|\geq r+4\quad\und\quad |A+B|=|A|+|B|+r\geq 3r+8.\ee
Set $$|A'+B'|=|A'|+|B'|+r'.$$
Translating $A$ and $B$ appropriately, we can assume
\be\label{ABC-intervals-sym} A'\subseteq [0,m]\cdot g, \quad B'\subseteq [0,n]\cdot g\quad\und\quad A'+B'\subseteq [0,m+n]\cdot g\ee with $0,\,mg\in A'$, \ $0,\,ng\in B'$, and  $m+n\leq p-1$. Thus $\ell_g(A')=m+1$ and $\ell_g(B')=n+1$.  Since $A'+B'$ rectifies, it follows that the map $\varphi:\Z/p\Z\rightarrow [0,p-1]\subseteq \Z$,
defined by $\varphi(sg)=s$ for $s\in [0,p-1]$, gives a Freiman isomorphism of $A'+B'$ with the integer sumset $\varphi(A')+\varphi(B')\subseteq \Z$. Observe that
\be\label{godat}{\gcd}\big(\varphi(A')-\varphi(A')+\varphi(B')-\varphi(B')\big)=1,\ee since if $\varphi(A')+\varphi(B')$ were contained in an arithmetic progression with difference $d\geq 2$, then this would also be the case for $\varphi(A')$ and $\varphi(B')$, and then $\ell_{dg}(A')+\ell_{dg}(B')<\ell_g(A')+\ell_g(B')$ would follow in view of $|A'|\geq |B'|\geq 2$, contradicting the minimality of $\ell_g(A')+\ell_g(B')$ for $g$.

In view of \eqref{rectify-large-bound-sym-b} and  $|B'|\leq |A'|$, we have $|A'+B'|\leq |A+B|\leq |A'|+|B'|+\min\{|A'|,\,|B'|\}-4$. Thus, in view of \eqref{godat}, we can apply the $3k-4$ Theorem (Theorem \ref{thm-3k4Z}) to the isomorphic sumset $\varphi(A')+\varphi(B')$. Then, letting $P_{A}=[0,m]\cdot g$, \ $P_{B}=[0,n]\cdot g$ and $P_{A+B}\subseteq A'+B'$ be the resulting arithmetic progressions with common difference $g$, we conclude that $|P_{A+B}|\geq |A'|+|B'|-1$, \be |P_A\setminus A'|\leq r'+1\quad \und  \quad |P_B\setminus B'|\leq r'+1.\label{rolling-a-sym-a}\ee  If $A'=X$ and $B'=Y$, then the original sumset $A+B=X+Y$ rectifies, $r'=r$ and the proposition follows with the progressions $P_A$, $P_B$ and $-(P_{A+B})^\mathsf c$ just defined (interchanging $P_A$ and $P_B$ if $X=B$). Therefore we can assume otherwise: \be\label{AorBnontempty-sym-a}\quad Y\setminus B'\neq \emptyset\quad\mbox{ or } \quad X\setminus A'\neq \emptyset.\ee

 %Let $$I=\overline{P_{A}+P_{B}}=[m+n+1,p-1]_g,$$ and
 Let $\Delta\geq 0$ be the integer such that   \be\nn |A+B|= |A'+B'|+\Delta,\ee in which case
 \be\label{r'-mainbound-sym} r'= |A\setminus A'|
 +|B\setminus B'|+r-\Delta.\ee
Since $|A'|+|B'|+r'=|A'+B'|=|A+B|-\Delta$,  it follows
 from
 \eqref{rectify-large-bound-sym-b} and $|B'|\leq |A'|$ that
 \be\label{1st-r'} r'\leq |B'|-4-\Delta\quad\und\quad r'\leq |A'|-4-\Delta.\ee
 Averaging both bounds above  together with the bound \eqref{r'-mainbound-sym}, and recalling that $|A+B|=|A|+|B|+r$, we obtain  \be\label{r'-special-sym} r'\leq \frac13|A+B|-\frac83-\Delta.\ee

\subsection*{Step A} $|-A'+(A'+Y)^\mathsf c|\leq |(A'+Y)^\mathsf c|+2|A'|-4$.

\begin{proof}
If Step A fails, then it follows from  $p-|A+B|=|(A+B)^\mathsf c|=|(X+Y)^\mathsf c|\leq |(A'+Y)^\mathsf c|$ and Proposition \ref{lemma-dual} that  $$p-|A+B|+2|A'|-3\leq |(A'+Y)^\mathsf c|+2|A'|-3\leq |-A'+(A'+Y)^\mathsf c|\leq |Y^\mathsf c|=p-|Y|.$$ Thus \eqref{rectify-large-bound-sym-b}, $|B'|\leq |Y|$ and $|B'|\leq |A'|$ imply
$$|Y|+2|A'|-3\leq |A+B|\leq |A'|+2|B'|-4\leq |Y|+2|A'|-4,$$ which is not possible.
\end{proof}

\subsection*{Step B} $|-B'+(X+B')^\mathsf c|\leq |(X+B')^\mathsf c|+2|B'|-4$.

\begin{proof}
If Step B fails, then from $p-|A+B|=|(A+B)^\mathsf c|=|(X+Y)^\mathsf c|\leq |(X+B')^\mathsf c|$ and Proposition  \ref{lemma-dual} it follows that  $$p-|A+B|+2|B'|-3\leq |(X+B')^\mathsf c|+2|B'|-3\leq |-B'+(X+B')^\mathsf c|\leq |X^\mathsf c|=p-|X|.$$ Thus \eqref{rectify-large-bound-sym-b} and $|A'|\leq |X|$ imply
$$|X|+2|B'|-3\leq |A+B|\leq |A'|+2|B'|-4\leq |X|+2|B'|-4,$$ which is not possible.
\end{proof}

\subsection*{Step C} If $|A'|+|Y|\geq |B'|+|X|$, then $|-A'+(A'+Y)^\mathsf c|\leq |A'|+2|(A'+Y)^\mathsf c|-3$.

\begin{proof}  Note $2p-2|A+B|=2|(A+B)^\mathsf c|=2|(X+Y)^\mathsf c|\leq 2|(A'+Y)^\mathsf c|$, while Proposition \ref{lemma-dual} implies $|-A'+(A'+Y)^\mathsf c|\leq |Y^\mathsf c|$. As a result, if Step C fails, we obtain  $$|A'|+2p-2|A+B|-2\leq |A'|+2|(A'+Y)^\mathsf c|-2\leq |-A'+(A'+Y)^\mathsf c|\leq |Y^\mathsf c|=p-|Y|.$$ Collecting terms in the above inequality, multiplying by $2$, and using the hypothesis of Step C, we obtain
\be 2p\leq 4|A+B|-2|Y|-2|A'|+4\leq 4|A+B|-|X|-|Y|-|A'|-|B'|+4.\nn\ee
Using the identities $|X|+|Y|=|A|+|B|=|A+B|-r$ and $|A'|+|B'|=|A'+B'|-r'=|A+B|-r'-\Delta$ in the above estimate, and then applying the estimate \eqref{r'-special-sym},  yields \be\label{larkm}2p\leq 2|A+B|+r+r'+\Delta+4
\leq \frac{7}{3}|A+B|+r+\frac43.\ee

By hypothesis, $|A+B|\leq \frac{3p+3}{4}$ and $p\geq 4r+9$. Since $p\geq 3$  is odd, strict inequality in the first estimate together with \eqref{larkm} yields
$\frac{3p+1}{4}\geq |A+B|\geq \frac{6p-3r-4}{7}$, implying $p\leq 4r+7$, which is contrary to hypothesis. Therefore we can instead assume  equality holds:  $|A+B|= \frac{3p+3}{4}$. As a result, $p\equiv 3\mod 4$, in which case the hypothesis $p\geq 4r+9$ improves to $p\geq 4r+11$. In such case, \eqref{larkm} yields $\frac{3p+3}{4}= |A+B|\geq \frac{6p-3r-4}{7}$, implying $p\leq 4r+11$ (recall $p\equiv 3\mod 4$). Thus $$p=4r+11\quad\und\quad |A+B|=\frac34(p+1)=3(r+3)=3r+9.$$
%If $\Delta\geq 1$, then we can improve the estimate in \eqref{r'-special-sym} by one, resulting in an improvement in the bound in \eqref{larkm} by one, implying $8r+22=2p\leq  7(r+3)+4+\frac13$, contradicting that $p\geq 4r+11$. Therefore $\Delta=0$, meaning $A'+B'=A+B$.
Since \eqref{goup} implies $3r+9=|A+B|=|A|+|B|+r$ with $|A|\geq |B|\geq r+4$, it follows that   $|B|=r+4$, $|A|=r+5$ and $|A|+2|B|-4=|A+B|$. Hence, since $|A+B|\leq |A'|+2|B'|-4\leq |A|+2|B|-4$, it follows that  $|A'|=|A|$ and $|B'|=|B|$. But then  $A'=X$ and $B'=Y$, contradicting \eqref{AorBnontempty-sym-a}.
\end{proof}

\subsection*{Step D} If $|B'|+|X|\geq |A'|+|Y|$, then $|-B'+(X+B')^\mathsf c|\leq |B'|+2|(X+B')^\mathsf c|-3$.

\begin{proof}  Note $2p-2|A+B|=2|(A+B)^\mathsf c|=2|(X+Y)^\mathsf c|\leq 2|(X+B')^\mathsf c|$, while Proposition \ref{lemma-dual} implies $|-B'+(X+B')^\mathsf c|\leq |X^\mathsf c|$. As a result, if Step D fails, we obtain
$$|B'|+2p-2|A+B|-2\leq |B'|+2|(X+B')^\mathsf c|-2\leq |-B'+(X+B')^\mathsf c|\leq |X^\mathsf c|=p-|X|.$$ Collecting terms in the above inequality, multiplying by $2$, and using the hypothesis of Step D, we obtain
\be 2p\leq 4|A+B|-2|X|-2|B'|+4\leq 4|A+B|-|X|-|Y|-|A'|-|B'|+4,\nn\ee
and we now obtain the same contradiction as at the end of Step C.
\end{proof}

By our application of the $3k-4$ Theorem (Theorem \ref{thm-3k4Z}) to $\varphi(A')+\varphi(B')$, we know that $A'+B'$ contains an arithmetic progression $P_{A+B}$ with difference $g$ and length $|P_{A+B}|\geq |A'|+|B'|-1$, which implies \be\label{A'B'short}\ell_g\big((A'+B')^\mathsf c\big)\leq p-|A'|-|B'|+1.\ee  From \eqref{rolling-a-sym-a} and  \eqref{1st-r'}, we obtain  \be\label{A'length}\ell_g(-A')=\ell_g(A')\leq |P_A|\leq |A'|+r'+1\leq |A'|+|B'|-3.\ee Combined with \eqref{A'B'short}, it follows that  $\ell_g(-A')+\ell_g\big((A'+B')^\mathsf c\big)\leq p-2$, ensuring $-A'+(A'+B')^\mathsf c$ rectifies via the difference $g$. Since $(A'+Y)^\mathsf c\subseteq (A'+B')^\mathsf c$, it follows that $-A'+(A'+Y)^\mathsf c$ also rectifies via the difference $g$.

From \eqref{rolling-a-sym-a} and  \eqref{1st-r'}, we obtain  \be\label{B'length}\ell_g(-B')=\ell_g(B')\leq |P_B|\leq |B'|+r'+1\leq |A'|+|B'|-3.\ee Combined with \eqref{A'B'short}, it follows that  $\ell_g(-B')+\ell_g\big((A'+B')^\mathsf c\big)\leq p-2$, ensuring $-B'+(A'+B')^\mathsf c$ rectifies via the difference $g$. Since $(X+B')^\mathsf c\subseteq (A'+B')^\mathsf c$, it follows that $-B'+(X+B')^\mathsf c$ also rectifies via the difference $g$.

By our application of the $3k-4$ Theorem (Theorem \ref{thm-3k4Z}) to $\varphi(A')+\varphi(B')$, we know $\varphi(A')$ is contained in the arithmetic progression $\varphi(P_A)=[0,m]$ with difference $1$ and length $|P_A|\leq |A'|+r'+1$, with the latter inequality by \eqref{rolling-a-sym-a}. Moreover, \eqref{1st-r'} ensures that  $r'+1\leq |B'|-3\leq |A'|-3$, in which case  $\varphi(A')$ contains at least $2$ consecutive elements. Hence \be\label{gcd-a}{\gcd}\big(\varphi(A')-\varphi(A')\big)=1.\ee
Likewise, by our application of the $3k-4$ Theorem (Theorem \ref{thm-3k4Z}) to $\varphi(A')+\varphi(B')$, we know $\varphi(B')$ is contained in the arithmetic progression $\varphi(P_B)=[0,n]$ with difference $1$ and length $|P_B|\leq |B'|+r'+1$, with the latter inequality by \eqref{rolling-a-sym-a}. Moreover, \eqref{1st-r'} ensures that  $r'+1\leq |B'|-3$, in which case  $\varphi(B')$ contains at least $2$ consecutive elements. Hence \be\label{gcd-b}{\gcd}\big(\varphi(B')-\varphi(B')\big)=1.\ee
We divide the remainder of the proof  into two short, and nearly identical, subcases.

\subsection*{CASE 1:} $|A'|+|Y|>|B'|+|X|$.

\smallskip

Since $-A'+(A'+Y)^\mathsf c$ rectifies via the difference $g$, we see that  it is isomorphic to the sumset $\varphi(m-A')+\varphi(-x+(A'+Y)^\mathsf c)$, where $x\in \Z/p\Z$ is the first term in the minimal length arithmetic progression with difference $g$ containing $(A'+Y)^\mathsf c$. Hence, in view of  \eqref{gcd-a}, Step A, the case hypothesis and Step C, we can apply the $3k-4$ Theorem (Theorem \ref{thm-3k4Z}) to the isomorphic sumset $\varphi(m-A')+\varphi(x+(A'+Y)^\mathsf c)$ and thereby conclude that there is an arithmetic progression $P\subseteq -A'+(A'+Y)^\mathsf c$ with difference $g$ and length $|P|\geq |A'|+|(A'+Y)^\mathsf c|-1\geq |A'|+|(X+Y)^\mathsf c|-1=p-|A+B|+|A'|-1$. Consequently, since Proposition \ref{lemma-dual} ensures that $P\subseteq -A'+(A'+Y)^\mathsf c\subseteq Y^\mathsf c$, it follows that $$\ell_g(Y)\leq |A+B|-|A'|+1.$$ Combined with \eqref{A'length}, we find that
\be\label{rectifygo}\ell_g(A')+\ell_g(Y)\leq |A+B|+r'+2.\ee

If  $A'+Y$ does not rectify,  then \eqref{rectifygo} and \eqref{r'-special-sym} imply $p\leq |A+B|+r'\leq \frac43|A+B|-\frac83$, whence $$|A+B|\geq \frac34p+2,$$ contrary to hypothesis.  Therefore  we instead conclude that $A'+Y$ rectifies. Consequently, $A'+Y'$ also rectifies for any subset $Y'$ with $B'\subseteq Y'\subseteq Y$. As a result, if  $|B'|<|Y|$, then there is subset $Y'$ with $B\subseteq Y'\subseteq Y$ and $|Y'|=|B'|+1$ such that $A'+Y'$ rectifies. If $|B'|<|A'|$, then $|Y'|=|B'|+1\leq |A'|$ and $|A'|+2|Y'|-4\geq |A'|+2|B'|-2\geq |A+B|$, which would contradict the maximality of $|A'|+|B'|$. If $|B'|=|A'|$, then $|Y'|=|B'|+1> |A'|$ and $|Y'|+2|A'|-4=|A'|+2|B'|-3\geq |A+B|$, also contradicting the maximality of $|A'|+|B'|$.
We are left to conclude that $|B'|=|Y|$. But now  the case hypothesis yields  $|A'|+|Y|>|B'|+|X|=|Y|+|X|$, yielding the contradiction $|A'|>|X|$ (as $A'\subseteq X$).

\subsection*{CASE 2:} $|B'|+|X|\geq |A'|+|Y|$.

\smallskip

This subcase is nearly identical with the last, though there are a few small differences.
Since $-B'+(X+B')^\mathsf c$ rectifies via the difference $g$, it is isomorphic to $\varphi(n-B')+\varphi(-y+(X+B')^\mathsf c)$, where $y\in \Z/p\Z$ is the first term in the minimal length arithmetic progression with difference $g$ containing $(X+B')^\mathsf c$. Hence, in view of  \eqref{gcd-b}, Step B, the case hypothesis and Step D, we can apply the $3k-4$ Theorem (Theorem \ref{thm-3k4Z}) to the isomorphic sumset $\varphi(n-B')+\varphi(y+(X+B')^\mathsf c)$ and thereby conclude that there is an arithmetic progression $P\subseteq -B'+(X+B')^\mathsf c$ with difference $g$ and length $|P|\geq |B'|+|(X+B')^\mathsf c|-1\geq |B'|+|(X+Y)^\mathsf c|-1=p-|A+B|+|B'|-1$. Consequently, since Proposition \ref{lemma-dual} ensures that $P\subseteq -B'+(X+B')^\mathsf c\subseteq X^\mathsf c$, it follows that $$\ell_g(X)\leq |A+B|-|B'|+1.$$ Combined with \eqref{B'length}, we find that
\be\label{rectifygoII}\ell_g(B')+\ell_g(X)\leq |A+B|+r'+2.\ee

If  $X+B'$ does not rectify,  then \eqref{rectifygoII} and \eqref{r'-special-sym} imply $p\leq |A+B|+r'\leq \frac43|A+B|-\frac83$, whence $|A+B|\geq \frac34p+2$, contrary to hypothesis.  Therefore  we instead conclude that $X+B'$ rectifies, in which case $X+B'$ contradicts the maximality of $|A'|+|B'|$ unless $A'=X$. However, $A'=X$ implies $|A'|=|X|$, which combined with the case hypothesis yields $|B'|\geq |Y|$. Since $B'\subseteq Y$, this in turn forces $B'=Y$. But now $A'=X$ and $B'=Y$, contradicting \eqref{AorBnontempty-sym-a}, which completes CASE 2, and the proof as well.
\end{proof}

With Proposition  \ref{lemma-four-case1} in hand, the method of Freiman \cite{Freiman-vosp} \cite{Freiman-monograph} \cite{natbook} \cite{rodseth-2.4} (used for $A+A$) can now be pushed to higher densities for $A+B$, as was done in \cite{pablo-grynk-3k-4-A=B} for $A+A$. However, to deal with general sumsets $A+B$ rather than $A+A$, we will need to use a new variation for how the exponential sums are estimated in the argument. For following the numerical calculations, both here and later, the reader may wish to use an algebraic computation system since they can be very tedious to do by hand.

\begin{theorem}\label{fouier-II-distinct}
Let $G=\Z/p\Z$ with $p\geq 2$ a prime, let $\beta\in [0.731,1]$ and $\alpha\in (0, 0.212]$ be real numbers, let $A,\, B\subseteq G$ be nonempty subsets, and set $C=-(A+B)^\mathsf c$. Suppose   $$|A+B|=|A|+|B|+r\leq |A|+(1+\alpha)|B|-3\quad\und\quad \beta|A|\leq |B|\leq |A|\leq \min\{c_1,c_\beta\}p,$$ where
$c_1=\frac{5-18\alpha-24\alpha^2-8\alpha^3}{14-9\alpha-24\alpha^2-8\alpha^3}$ and $c_{\beta}=\frac{-4-(1+12\alpha)\beta+3(5+\alpha-4\alpha^2)\beta^{2}+
(-1+3\alpha-4\alpha^3)\beta^{3}-4(1+\alpha)^3\beta^{4}}{-4+11\beta^{2}+11\beta^{3}
-4\beta^{5}-3\alpha\beta
(4-5\beta^{2}+4\beta^{4})-12\alpha^2(\beta^{2}+\beta^{5})-4\alpha^3(\beta^{3}
+\beta^{5})}.
$
Then there are arithmetic progressions $P_A,\,P_B,\,P_{C}\subseteq G$ all having  common difference  such that $X\subseteq P_X$ and $|P_X|\leq |X|+r+1$ for all $X\in \{A,B,C\}$.\end{theorem}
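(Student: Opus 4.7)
The strategy combines an exponential sum (Fourier-analytic) argument in the spirit of Freiman and Rodseth with the Vosper-duality-enhanced reduction given in Proposition~\ref{lemma-four-case1}. The plan is to extract from $A$ and $B$ subsets $A'$ and $B'$ that are jointly supported on short arithmetic progressions in $\Z/p\Z$ of a common difference, so that $A'+B'$ rectifies, and whose sizes satisfy $|A'|+2|B'|-4\geq |A+B|$ and $|B'|\leq |A'|$; Proposition~\ref{lemma-four-case1} then delivers the required progressions $P_A$, $P_B$, $P_C$.

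The first step is the energy bound. Writing $r_{A,B}(c)=\#\{(a,b)\in A\times B:\, a+b=c\}$, Plancherel gives $\sum_x |S_A(x)|^2|S_B(x)|^2=p\sum_c r_{A,B}(c)^2$, while Cauchy--Schwarz yields $\sum_c r_{A,B}(c)^2\geq |A|^2|B|^2/|A+B|$. Separating the contribution at $x=0$ and averaging over the remaining $p-1$ terms produces a nonzero $x\in\Z/p\Z$ with
\[
|S_A(x)|^2|S_B(x)|^2\geq \frac{|A|^2|B|^2}{p-1}\cdot \frac{p-|A+B|}{|A+B|}.
\]
Combined with the trivial bounds $|S_A(x)|\leq |A|$ and $|S_B(x)|\leq |B|$, this forces both $|S_A(x)|\geq \lambda|A|$ and $|S_B(x)|\geq \lambda|B|$ simultaneously, where $\lambda=\sqrt{(p-|A+B|)/((p-1)|A+B|)}$.

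I would next apply Lemma~\ref{lem-freiman-circle} contrapositively to each of the sequences $(\exp(ax/p))_{a\in A}$ and $(\exp(bx/p))_{b\in B}$. This produces open half-arcs of the complex unit circle containing strictly more than $(|A|+|S_A(x)|)/2\geq (1+\lambda)|A|/2$ elements of $A$ and more than $(1+\lambda)|B|/2$ elements of $B$. Letting $d\in\Z/p\Z$ be a multiplicative inverse of $x$, these correspond to subsets of $A$ and of $B$ each lying in an arithmetic progression of common difference $d$ and length at most $(p-1)/2$. Labelling them as $A'$ and $B'$ with $|B'|\leq |A'|$ (swapping, if needed, which is allowed by Proposition~\ref{lemma-four-case1}), we get $\ell_d(A')+\ell_d(B')\leq p-1$, so $A'+B'$ rectifies.

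The main obstacle is the numerical verification that, under the density constraint $|A|\leq \min\{c_1,c_\beta\}p$, the inequality $|A'|+2|B'|-4\geq |A+B|$ holds together with the background hypotheses $|A+B|\leq \tfrac{3}{4}(p+1)$ and $p\geq 4r+9$ of Proposition~\ref{lemma-four-case1}. Substituting the lower bounds for $|A'|$ and $|B'|$, the first inequality rearranges (after squaring) into a polynomial relation between $|A|/p$, $\alpha$ and $\beta$, with lower-order corrections coming from the $-3$ in the sumset hypothesis, the $-4$ in Proposition~\ref{lemma-four-case1}, and the $(p-1)$ in the denominator of $\lambda^2$. Treating the unconstrained extremum $|B|=|A|$ yields the threshold $c_1$, while the constrained extremum $|B|=\beta|A|$ yields $c_\beta$; both expressions are elementary but tedious, and the rational functions displayed in the statement are the result of carrying out these two optimizations in closed form. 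The auxiliary bounds $|A+B|\leq (2+\alpha)|A|\leq (2+\alpha)c_1 p$ and $r\leq \alpha|A|\leq \alpha c_1 p$ then readily imply $|A+B|\leq \tfrac{3}{4}(p+1)$ and $p\geq 4r+9$ throughout the prescribed ranges of $\alpha$ and $\beta$. With these verifications in place, Proposition~\ref{lemma-four-case1} furnishes arithmetic progressions $P_A$, $P_B$, $P_C$ of common difference $d$ satisfying $X\subseteq P_X$ and $|P_X|\leq |X|+r+1$ for all $X\in\{A,B,C\}$, completing the proof.
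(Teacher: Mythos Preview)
Your argument has a genuine gap at the very first step: the energy--pigeonhole bound is far too weak to feed Proposition~\ref{lemma-four-case1}.  You correctly obtain a nonzero $x$ with $|S_A(x)|\,|S_B(x)|\ge \lambda\,|A|\,|B|$ where $\lambda=\sqrt{(p-\ell)/((p-1)\ell)}$ and $\ell=|A+B|$, and the step ``trivial bounds force both $|S_A(x)|\ge\lambda|A|$ and $|S_B(x)|\ge\lambda|B|$'' is fine.  The problem is the size of $\lambda$.  For $\ell$ of order $cp$ (which is exactly the regime the theorem addresses) one has $\lambda^2\approx (1-c)/(cp)$, so $\lambda=O(p^{-1/2})$.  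On the other hand, to satisfy $|A'|+2|B'|-4\ge |A+B|$ with $|A'|,|B'|>(1+\lambda)|A|/2,(1+\lambda)|B|/2$, you would need (up to lower-order terms) $\lambda\ge (|A|+2\alpha|B|)/(|A|+2|B|)$, which for $|A|=|B|$ and small $\alpha$ is about $1/3$.  Rewriting $\lambda\ge 1/3$ as $9(p-\ell)\ge (p-1)\ell$ forces $\ell<9$, so the argument only works for sets of bounded size, not for $|A|$ up to a constant fraction of $p$.  Consequently the ``tedious but elementary'' optimizations you describe cannot possibly produce the constants $c_1,c_\beta$ in the statement; the polynomial relation you would get degenerates.

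The paper proceeds in the opposite direction.  Rather than finding one frequency where both transforms are large, it assumes the hypothesis of Proposition~\ref{lemma-four-case1} fails, which means that for \emph{every} nonzero $d$, every rectifying pair $A'\subseteq A$, $B'\subseteq B$ has $\min\{|A'|,|B'|\}\le \ell/3+1$.  Via Lemma~\ref{lem-freiman-circle} this translates into the dichotomy: for each $d\ne 0$, either $|S_A(d)|\le \gamma_A:=(2\ell+6-3|A|)/3$ or $|S_B(d)|\le \gamma_B:=(2\ell+6-3|B|)/3$.  One then inserts these \emph{upper} bounds into the identity $|A|\,|B|\,p=\sum_{x}S_A(x)S_B(x)\overline{S_{A+B}(x)}$ (not the energy identity), applies Cauchy--Schwarz and Parseval, and after dividing by $|A|^5$ obtains a polynomial inequality in $x=|B|/|A|$, $z=p/|A|$ and $\alpha$ whose boundary at $x=1$ gives $c_1$ and at $x=\beta$ gives $c_\beta$.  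The point is that $\gamma_A,\gamma_B$ are of order $|A|$, independent of $p$, so no $p^{-1/2}$ loss occurs.  To repair your proof you would need to replace the single-frequency pigeonhole by this ``upper bound for all frequencies'' mechanism.
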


\begin{proof}
The real roots/singularities of $c_1$ are $\alpha\cong 0.2129$, $-2$, $-1.56$ and $0.56$. Thus $c_1\in (0,\frac{5}{14}]$ for all $\alpha\in (0,0.212]$. Consider the denominator of $c_\beta$: $$-4+11\beta^{2}+11\beta^{3}
-4\beta^{5}-3\alpha\beta
(4-5\beta^{2}+4\beta^{4})-12\alpha^2(\beta^{2}+\beta^{5})-4\alpha^3(\beta^{3}
+\beta^{5}).$$
Calculating successive derivatives with respect to $\alpha$, it follows that all partial derivatives with respect to $\alpha$ are non-positive, for  $\alpha\geq 0$ and $\beta>0$. Hence, since $\alpha\leq 0.212$, it follows that the denominator of $c_\beta$ is at least the value obtained under the substitution $\alpha=0.212$, which is a $5$-th degree polynomial whose only real roots are $\beta\approx -0.825$, $0.562$ and $1.6199$, from which we can derive that the denominator of $c_\beta$ is positive for all $\alpha\in (0,0.212]$ and $\beta\in [0.731,1]$. Thus the hypothesis $0<|A|\leq c_\beta p$ ensures that both  $c_\beta$ and the numerator of $c_\beta$ are positive.
We have  $|A+B|<(2+\alpha)|A|\leq (2+\alpha)c_1p
=\frac{(5-18\alpha-24\alpha^2-8\alpha^3)(2+\alpha)}{14-9\alpha-24\alpha^2-8\alpha^3}p$. The derivative of the coefficient of $p$ equals $\frac{2(-43-128\alpha-60\alpha^2+64\alpha^3+32\alpha^4)}{(-7+8\alpha+8\alpha^2)^2}$, which has roots/discontinuities for $\alpha \approx -1.56$, $0.56$,  $-2.139$, $-0.8455$, $-0.5$ and  $1.48$ with the derivative negative for $\alpha\in [0,0.5]$. Thus $|A+B|\leq \frac{(5-18\alpha-24\alpha^2-8\alpha^3)(2+\alpha)}
{14-9\alpha-24\alpha^2-8\alpha^3}p\leq \frac{10}{14}p<0.715\, p.$
As a result, \be|A+B|\leq \frac34p\label{hite}.\ee
In particular,
$A+B\neq G$, so  the Cauchy-Davenport Theorem (Theorem \ref{thm-cdt})  implies $r\geq -1$.
%Let \be\label{oneone}\alpha_r=\frac{r+3}{|B|}>0,\ee so that \be \label{onetwo} r=\alpha_r|B|-3,\quad |A+B|=|A|+|B|+r=|A|+(1+\alpha_r)|B|-3\quad\und\quad\alpha_r\leq \alpha<1.\ee
By hypothesis, $\alpha<1$ and $$r\leq \alpha|B|-3<|B|-3.$$
Thus $|A|\geq |B|\geq r+4$, whence $3r+8\leq |A|+|B|+r=|A+B|\leq  \frac34p$, in turn implying that
\be\label{r-small} p\geq 4r+11\quad\und\quad r\leq \frac14 p-2.75.\ee
%In particular, $p\geq 4r+10\geq 6$, and thus $p\geq 7$ as $p$ is prime.
%If $r=-1$, then $|A+B|\leq \frac34p+\frac12< p-1$, in which Conjecture \ref{conj-crit-pair} is known to hold (by Vosper's Theorem as mentioned in the introduction), yielding all desired conclusions. Therefore we may assume $r\geq 0$ and $p\geq 4r+10\geq 10$, so that $p\geq 11$. If $r=0$, then $|A|\geq |B|\geq r+4=4$ and  $|A+B|\leq \frac34p+\frac12< p-2$, in which case Conjecture \ref{conj-crit-pair} is known to hold (as mentioned in the introduction), yielding all desired conclusions. Therefore we may assume $r\geq 1$ and $p\geq 4r+10\geq 14$, so that $p\geq 17$.
Let $$\ell=|A+B|=|A|+|B|+r, \quad a=|A| \quad \und \quad b=|B|.$$
%Our hypothesis imply the following inequalities
%\begin{align}\label{inequ-a-b}
%& |B|\leq |A|\leq \beta |B|,\\\label{ineq-ell-b}
%&\ell=|A+B|=|A|+|B|+r\leq (1+\alpha+\beta)|B|-3<(1+\alpha+\beta)|B|.
%\end{align}
In view of \eqref{hite} and \eqref{r-small}, we can apply Proposition \ref{lemma-four-case1} and thereby complete the proof unless every  pair of subsets $A'\subseteq A$  and  $B'\subseteq B$ such that    $A'+B'$ rectifies has   \be\nn |A'|+|B'|+\min\{|A'|,\,|B'|\}\leq |A+B|+3=\ell+3,\ee
in turn implying   \be\label{assump-!!}\min\{|A'|,\,|B'|\}\leq\frac13\ell+1.\ee

%Replace  $B$ by an appropriate translate of $B$ so that $|A\cap B|=\max\{|A\cap (x+B)|:\; x\in \Z/p\Z\}$. Observe that $\mathsf r_{A-B}(x)=|A\cap (x+B)|$, ensuring that $\mathsf r_{A-B}(x)\leq |A\cap B|$ for all $x\in \Z/p\Z$ by our translation assumption. Thus Lemma \ref{lemma-energy} implies \be\label{intersect-large}|A\cap B|\geq \frac{|A||B|}{|A+B|}=\frac{|A||B|}{\ell}.\ee

For the rest of this proof, let us identify $\Z/p\Z$ with the set of integers $[0,p-1]$ with addition mod $p$. Then, for every $X\subseteq \Z/p\Z$ and $d\in \Z/p\Z$, we have $S_X(d)=\sum_{x\in X}\exp(dx/p) \in \C$.

\smallskip

\textbf{Claim A:} For  every  $d\in [1,p-1]$, either $|S_A(d)|\leq \gamma_A:=\frac{2\ell+6-3|A|}{3}\leq \frac13\big(2 \alpha b+2b-a\big)$ or $|S_B(d)|\leq \gamma_B:=\frac{2\ell+6-3|B|}{3}\leq \frac13\big(2\alpha b+2a-b\big)$.

\begin{proof}Let $d\in [1,p-1]$ be arbitrary.
For $u,\,v\in [0,1)$, consider the open half-arcs $C_u=\{\exp(x):\; x\in (u,u+\frac12)\}$ and $C_v=\{\exp(x):\; x\in (v,v+\frac12)\}$  in the unit circle in $\C$. Let $A'=\{x\in A:\;\exp(dx/p)\in C_u\}$ and $B'=\{x\in B:\;\exp(dx/p)\in C_v\}$, and choose $u,\,v\in [0,1)$ to maximize $|A'|$ and $|B'|$.  Since the set of $p$-th roots of unity contained in $C_u$ or $C_v$ correspond to an arithmetic progression of difference 1 in $\Z/p\Z$, it is clear that, for $d^*$ the multiplicative inverse of $d$ modulo $p$, we have $\ell_{d^*}(A')\leq \frac{p+1}{2}$ and $\ell_{d^*}(B')\leq \frac{p+1}{2}$. Hence the sumset $A'+B'$ rectifies.
 Then \eqref{assump-!!} implies that $\min\{|A'|,\,|B'|\}\leq \frac13\ell+1$. If $|B'|\leq |A'|$, then the maximality of $v$ together with Lemma \ref{lem-freiman-circle} implies $|S_B(d)|\leq 2(\frac13\ell+1)-|B|$. If $|A'|\leq |B'|$, then the maximality of $u$ together with Lemma \ref{lem-freiman-circle} implies $|S_A(d)|\leq 2(\frac13\ell+1)-|A|$. In either case, one of the desired bounds follows.
\end{proof}

In view of Claim A, we can partition $[1,p-1]=I_A\cup I_B$ such that $|S_A(d)|\leq \gamma_A= \frac{2\ell+6-3|A|}{3}$ for $d\in I_A$, and $|S_B(d)|\leq \gamma_B=\frac{2\ell+6-3|B|}{3}$ for $d\in I_B$. Moreover, set $C_d=B$ and $D_d=A$ if $d\in I_A$, and $C_d=A$ and $D_d=B$ for $d\in I_B$.
 To complete the proof, we now exploit Claim A to obtain a contradiction, using in particular the following manipulations which are a new variation on  fourier estimates used in Additive Combinatorics (e.g. \cite[Corollary 19.1]{Grynk-book} \cite[pp. 290--291]{Grynk-book}).
By Fourier inversion, the triangle inequality, the fact that $S_A(0)=|A|=a$, $S_B(0)=|B|=b$ and $S_{A+B}(0)=\ell$, and the Cauchy-Schwarz Inequality, we have
\begin{align}\nn
abp= \Sum{x=0}{p-1}S_A(x)S_B(x)\overline{S_{A+B}(x)}
=ab\ell+\Sum{x=1}{p-1}S_A(x)S_B(x)\overline{S_{A+B}(x)}\\\nn
\leq ab\ell+\Sum{x=1}{p-1}|S_A(x)|\, |S_B(x)|\,|S_{A+B}(x)|
\leq ab\ell+\Sum{x=1}{p-1}\gamma_{D_x}|S_{C_x}(x)|\,|S_{A+B}(x)|
\\\nn\leq
ab\ell+\Big(\Sum{x=1}{p-1}
\gamma_{D_x}^2|S_{C_x}(x)|^2\Big)^{1/2}\Big(\Sum{x=1}{p-1}|S_{A+B}(x)|^2\Big)^{1/2}\\\nn
\leq
ab\ell+\Big(\gamma_{B}^2\Sum{x=1}{p-1}
|S_{A}(x)|^2+\gamma_A^2\Sum{x=1}{p-1}
|S_B(x)|^2\Big)^{1/2}\Big(\Sum{x=1}{p-1}|S_{A+B}(x)|^2\Big)^{1/2}
\\
=ab\ell+\big(\gamma_B^2(ap-a^2)+\gamma_A^2(bp-b^2)\big)^{1/2}(\ell p-\ell^2)^{1/2}.\nn
\end{align}
Thus $a^2b^2(p-\ell)^2\leq \big(\gamma_B^2a(p-a)+\gamma_A^2b(p-b)\big)\ell(p-\ell),$
in turn implying
\be\label{starter}a^2b^2(p-\ell)-\big(\gamma_B^2a(p-a)+\gamma_A^2b(p-b)\big)\ell\leq 0.\ee
It remains to derive an estimate for $p/a$ from \eqref{starter} that is contrary to our hypotheses.

Let $$x=\frac{b}{a}\;\und\;z=\frac{p}{a}.$$ Note $$0.731\leq \beta\leq x\leq 1\;\und\;z\geq \max\{c_1^{-1},c_\beta^{-1}\}.$$
Multiplying \eqref{starter} by $9/a^5$, applying the estimate $\ell=|A+B|<|A|+(1+\alpha)|B|=a+(1+\alpha)b$, and  using the estimates for $\gamma_A$ and $\gamma_B$ given in Step A, we obtain
\be\label{starstuff-a} 9x^2(z-1-x-\alpha x)-(1+x+\alpha x)\Big((2\alpha x+2-x)^2(z-1)
+(2\alpha x+2x-1)^2x(z-x)\Big)<0.\ee

Let $f(x,z,\alpha)$ denote the left hand side of \eqref{starstuff-a}. Now
\begin{align*}
&\frac{\partial^2}{\partial x^2}f=(30-6x-48x^2+6\alpha-24\alpha^2+18\alpha x-24\alpha^3x-144\alpha x^2-144\alpha^2x^2-48\alpha^3x^2)z\\&\quad\quad\quad-22-66x+80x^3+24\alpha^2-90\alpha x+24 \alpha^3 x+240\alpha x^3+240\alpha^2x^3+80\alpha^3x^3.\end{align*}
Since $0.731\leq\beta\leq x\leq 1$ and $\alpha>0$, we have $30-6x-48x^2<0$ and $6\alpha +18\alpha x-144\alpha x^2\leq 0$, so  the coefficient of $z$ above is negative Thus, using the hypothesis $z\geq c_1^{-1}\geq \frac{14}{5}=2.8$, we find that
\begin{align*}\frac{\partial^2}{\partial x^2}f\leq 62-82.8x-134.4x^2+80x^3+\alpha(16.8-39.6x-403.2x^2+240x^3)\\+\alpha^2(-43.2-403.2x^2+240x^3)
+\alpha^3(-43.2x-134.4x^2+80x^3)\end{align*} Since $0.731\leq\beta\leq x\leq 1$, it follows that each coefficient of a term $\alpha^i$ above (for $i\geq 1$) is negative, whence applying the estimate $\alpha\geq 0$ yields $\frac{\partial^2}{\partial x^2}f\leq 62-82.8x-134.4x^2+80x^3<0$, with latter inequality in view of $0.5<x<2$. Thus $\frac{\partial^2}{\partial x^2}f<0$,
ensuring that the value of $f(x,z,\alpha)$ will be minimized at an extremal value for $x$, and thus \eqref{starstuff-a} holds either with   $x=1$ or $x=\beta\geq 0.731$.

Suppose \eqref{starstuff-a} hold with $x=1$. Then \eqref{starstuff-a} yields
$$-14+9\alpha+24\alpha^2+8\alpha^3+(5-18\alpha-24\alpha^2-8\alpha^3)z< 0.$$
Since $\alpha\leq 0.212$, it follows that the coefficient of $z$ above is positive, whence $z< \frac{14-9\alpha-24\alpha^2-8\alpha^3}{5-18\alpha-24\alpha^2-8\alpha^3}=c_1^{-1}$, contrary to hypothesis. So we may now instead assume \eqref{starstuff-a} holds with $x=\beta\geq 0.731$. In this case, \eqref{starstuff-a} instead implies
$z<c_\beta^{-1}$, again contrary to hypothesis, completing the proof.
 \end{proof}

To gain a feel for possible values of the constants allowed in Theorem \ref{fouier-II-distinct}, we state three explicit  cases. If one compares with the recent work of Huicochea \cite{huicochea-3k-4-distinct} mentioned in the introduction, here we obtain better constants at the expense of $|A|$ and $|B|$ being required to be  much closer in size. Since we will later be able to remove such restrictions using isoperimetric arguments, these added restrictions are  less of a concern with Theorem \ref{fouier-II-distinct} an intermediary step for us.

\begin{corollary}\label{cor-fouier-II-distinct}
Let $G=\Z/p\Z$ with $p\geq 2$ a prime, let $A,\, B\subseteq G$ be nonempty subsets, and set $C=-(A+B)^\mathsf c$. Suppose   either \begin{align*}&|A+B|=|A|+|B|+r\leq |A|+1.021|B|-3\quad\und\quad \frac45|A|\leq |B|\leq |A|\leq \frac{p}{3}, \quad \mbox{ or}\\
&|A+B|=|A|+|B|+r\leq |A|+1.105|B|-3\quad\und\quad \frac45|A|\leq |B|\leq |A|\leq \frac{p}{5},\quad{ or}\\
&|A+B|=|A|+|B|+r\leq |A|+(1+\frac19)|B|-3\quad\und\quad 0.8484|A|\leq |B|\leq |A|\leq \frac{1963}{9253}\,p
.\end{align*}
Then there are arithmetic progressions $P_A,\,P_B,\,P_{C}\subseteq G$ all having  common difference  such that $X\subseteq P_X$ and $|P_X|\leq |X|+r+1$ for all $X\in \{A,B,C\}$.\end{corollary}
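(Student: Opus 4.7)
The plan is to derive each of the three parts of the corollary as a direct application of Theorem \ref{fouier-II-distinct} for a specific choice of the parameters $\alpha$ and $\beta$. For each case, I would first verify that the chosen $(\alpha,\beta)$ lies in the admissible ranges $\alpha\in(0,0.212]$ and $\beta\in[0.731,1]$, then evaluate the constants
$$c_1=\frac{5-18\alpha-24\alpha^2-8\alpha^3}{14-9\alpha-24\alpha^2-8\alpha^3}\quad\text{and}\quad c_\beta=c_\beta(\alpha,\beta)$$
at these values, and finally confirm that the density restriction $|A|\leq\min\{c_1,c_\beta\}\,p$ required by Theorem \ref{fouier-II-distinct} is implied by the simpler density restriction stated in each part of the corollary.

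For the first case I would take $\alpha=0.021$ and $\beta=4/5$. A direct numerical evaluation shows $c_1\approx 0.334>1/3$, and substituting $(\alpha,\beta)=(0.021,\,0.8)$ into the formula for $c_\beta$ likewise gives $c_\beta>1/3$, so the hypothesis $|A|\leq p/3$ suffices. For the second case I would take $\alpha=0.105$ and $\beta=4/5$ and check in the same way that $c_1\approx 0.222\geq 1/5$ and $c_\beta\geq 1/5$. For the third case, the natural choice is $\alpha=1/9$ and $\beta=0.8484$; here the computation for $c_1$ is exact, since clearing the denominator $729$ from numerator and denominator gives
$$c_1=\frac{2187-216-8}{9477-216-8}=\frac{1963}{9253},$$
so the density bound in the corollary matches $c_1$ with equality. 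It then remains only to check that $c_\beta\geq 1963/9253$ at $(\alpha,\beta)=(1/9,\,0.8484)$; the parameter $\beta=0.8484$ is evidently chosen just slightly above the threshold where $c_\beta$ falls below $c_1$.

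The only real obstacle is evaluating the rather unwieldy rational expression defining $c_\beta(\alpha,\beta)$, which involves a polynomial of degree up to $5$ in $\beta$ with $\alpha$-dependent coefficients. The cleanest approach is to substitute the three concrete pairs $(\alpha,\beta)$ directly and evaluate numerically, using a computer algebra system as the paper itself recommends. The proof of Theorem \ref{fouier-II-distinct} already establishes that the denominator of $c_\beta$ is positive throughout $\alpha\in(0,0.212]$ and $\beta\in[0.731,1]$, so the expression is well-defined and no sign issues arise; the three numerical checks together then yield all three parts of the corollary.
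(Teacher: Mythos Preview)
Your proposal is correct and follows exactly the paper's approach: the paper's proof simply says the corollary follows from Theorem \ref{fouier-II-distinct} applied with $(\alpha,\beta)=(0.021,\,0.8)$, $(0.105,\,0.8)$, and $(\tfrac19,\,0.8484)$, which are precisely the parameter choices you identified. Your explicit computation of $c_1=1963/9253$ for $\alpha=1/9$ and your description of the remaining numerical checks are more detail than the paper itself provides.
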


\begin{proof}
This follows from Theorem \ref{fouier-II-distinct} applied with $\alpha=0.021$ and  $\beta=.8$, applied with $\alpha=0.105$ and $\beta=.8$, and applied with $\alpha=\frac19$ and $\beta=0.8484$
\end{proof}

Our next step is to use isoperimetric machinery to remove most size and density restrictions from Theorem \ref{fouier-II-distinct}, resulting in the following theorem.

\begin{theorem}
\label{thm-r-O(p)-a}
Let $G=\Z/p\Z$ with $p\geq 2$ a prime, let $A,\, B\subseteq G$ be nonempty subsets with $A+B\neq G$, and set $C=-(A+B)^\mathsf c$. Suppose
  $|A|\geq |B|$, \ $\frac{9253}{1963}\Big(10r+26.5+\sqrt{2r+\frac{17}{4}}\Big)\leq p$, and  \be\nn|A+B|=|A|+|B|+r\leq \min\{|A|+\big(1+\frac19\big)|B|-3,\;p-9(r+3)\}.\ee
Then there are arithmetic progressions $P_A,\,P_B,\,P_{C}\subseteq G$ all having  common difference  such that
\begin{align*} &X\subseteq P_X\;\und\; |P_X|\leq |X|+r+1\quad \mbox{ for all $X\in \{A,B,C\}$}.\end{align*}
\end{theorem}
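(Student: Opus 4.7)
The plan is to reduce the theorem to Corollary \ref{cor-fouier-II-distinct} case 3 via the isoperimetric machinery of Corollary \ref{cor-atombounds}.3, together with the trio extension provided by Huicochea's reduction (Theorem \ref{thm-mario-apred}.2) and the symmetries of the saturated additive trio $(A,B,C)$ encoded by Vosper duality (Proposition \ref{lemma-dual}). Setting $\rho := r - \tfrac{1}{2} + \sqrt{2r + \tfrac{17}{4}}$, the density hypothesis on $p$ rewrites as $\tfrac{1963}{9253}p \geq 9(r+3) + \rho$, i.e., the minimum possible value of $|B| + \rho$ just fits beneath the Fourier ceiling of Corollary \ref{cor-fouier-II-distinct}. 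I would first dispense with the small-$r$ cases $r\leq 0$ using Vosper's theorem and the $r=0$ result mentioned in the introduction; for $r\geq 1$ the hypothesis $|A+B|\leq |A|+(1+\tfrac{1}{9})|B|-3$ gives $|B|\geq 9(r+3)$, and $|A+B|\leq p-9(r+3)$ gives $|(A+B)^{\mathsf c}|\geq 9(r+3)$.

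For the main step, I would take $k:=|B|$ in the favorable regime $|B|\leq p-|A+B|$, and $k:=p-|A+B|$ otherwise, so that $A$ is a valid $k$-fragment of $B$. This yields $\kappa_k(B)\leq |A+B|-|A|=|B|+r$, and Corollary \ref{cor-atombounds}.3 produces a $k$-atom $X$ of $B$ with $|X|\leq k+\rho$ and $|X+B|\leq |X|+|B|+r$. Playing $X$ as the larger set and $B$ as the smaller set, I would verify the four hypotheses of Corollary \ref{cor-fouier-II-distinct} case 3: the ratio condition $|B|/|X|\geq 0.8484$ follows from $|X|\leq |B|+\rho$ together with the elementary estimate $9(r+3)\geq 5.597\rho$ (routine by squaring); the small-doubling bound $|X+B|\leq |X|+(1+\tfrac{1}{9})|B|-3$ is immediate from $r\leq \tfrac{1}{9}|B|-3$; the size comparison $|X|\geq |B|$ is automatic for $k=|B|$; and the density ceiling $|X|\leq \tfrac{1963}{9253}p$ reduces to $|B|+\rho\leq \tfrac{1963}{9253}p$, supplied by the theorem's hypothesis when $|B|$ sits at the lower end of its range. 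Corollary \ref{cor-fouier-II-distinct} then yields a common-difference arithmetic progression $P_B\supseteq B$ of length at most $|B|+r+1$, after which Theorem \ref{thm-mario-apred}.2 (with $B$ in the role of the smallest set, using $|B|\leq |A|$) extends $P_B$ to the desired progressions $P_A$ and $P_C$ with the same common difference and the required length bounds, completing the proof in this regime.

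The principal obstacle is the regime in which $|B|$ is so large that $|B|+\rho>\tfrac{1963}{9253}p$, so that the atom of $B$ no longer fits under the Fourier density ceiling. In this regime, however, $|C|=p-|A+B|$ is automatically small, and I would invoke Vosper duality to rotate the additive trio and rerun the atom-and-Fourier argument on the pair $(B,C)$ or $(A,C)$, now with the small set $C$ playing the role of $B$ — the key point being that by Proposition \ref{prop-3k4-equiv} it suffices to cover any single one of the three sets $A,B,C$ by a short common-difference progression in order to obtain the full trio conclusion, so the rotation is free. The specific constant $\tfrac{9253}{1963}$ in the hypothesis on $p$ is calibrated precisely so that the minimum value $|B|=9(r+3)$ produces an atom just fitting beneath the Fourier ceiling, and in the boundary cases the trio rotation transfers the argument to a pair whose smaller set does meet the ceiling.
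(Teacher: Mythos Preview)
Your approach has a genuine gap in the ``large $|B|$'' regime that the rotation argument does not close. Taking $k=|B|$ produces an atom $X$ with $|B|\leq |X|\leq |B|+\rho$, and the Fourier corollary requires $|X|\leq \frac{1963}{9253}p\approx 0.212\,p$. Your rotation fix presumes that if $|B|$ is too large then $|C|$ is small, but this is false: nothing in the hypotheses prevents $|A|,|B|,|C|$ from each being roughly $p/3$ (with $r$ small), and in that case $\min\{|A|,|B|,|C|\}+\rho\approx p/3$ still exceeds $0.212\,p$, so no choice of pair brings the atom under the Fourier ceiling. The statement ``$|C|$ is automatically small'' is simply wrong, and Proposition~\ref{prop-3k4-equiv} does not help here because you never produce a short progression covering any of the three sets.

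The paper's key idea, which you are missing, is to take $k=9(r+3)$ rather than $k=|B|$. This gives a $9(r+3)$-atom $X$ of $B$ with $|X|\leq 10r+26.5+\sqrt{2r+\tfrac{17}{4}}$, bounded \emph{absolutely} in $r$ and independently of $|B|$; the hypothesis on $p$ is tailored exactly so that this quantity is $\leq\frac{1963}{9253}p$. Of course $X$ may now be far smaller than $B$, so one cannot apply the Fourier corollary to $X+B$. The fix is to iterate: take a $9(r+3)$-atom $Y$ of $X$, which satisfies the same absolute bound. Now both $|X|,|Y|\in\big[9(r+3),\,10r+26.5+\sqrt{2r+\tfrac{17}{4}}\,\big]$, so both fit under the Fourier ceiling, their ratio is $\geq\frac{9(r+3)}{10r+26.5+\sqrt{2r+17/4}}\geq 0.8877>0.8484$, and $|X+Y|\leq |X|+|Y|+r$. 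Corollary~\ref{cor-fouier-II-distinct} applied to $X+Y$ covers $X$ by a short progression; then Theorem~\ref{thm-mario-apred}.1 applied to the trio $(X,B,-(X+B)^{\mathsf c})$ lifts this to $B$, and a second application to $(B,A,C)$ finishes.
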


\begin{proof}
Since $A+B\neq G$, the Cauchy-Davenport Theorem (Theorem \ref{thm-cdt}) ensures that $r\geq -1$, whence $|A+B|\leq p-9(r+3)\leq p-18$.
By hypothesis, we have $$|A|,|B|,|C|\geq 9(r+3)\geq r+19\geq 18.$$ As a result, if $r\leq 0$, then Conjecture \ref{conj-crit-pair} holds for $A+B$ (as mentioned in the Small $r$ Subsection of the  Introduction), and the desired conclusion follows. Therefore we can assume $r\geq 1$. By hypothesis, we have \be\label{p-bigwithr} p\geq \frac{9253}{1963}\Big(10r+26.5+\sqrt{2r+\frac{17}{4}}\Big)\geq 14r+34.\ee
%As a result, if $r=1$, then Conjecture \ref{conj-crit-pair} again holds for $A+B$ (as mentioned in the introduction, following from  the main result in \cite{ham-vosp+2}), giving the desired conclusion. Therefore we can assume $r\geq 2$.

Since $|A|,|C|\geq 9(r+3)$, it follows that $B$ is $9(r+3)$-separable with $\kappa_{9(r+3)}(B)\leq |A+B|-|A|=|B|+r$. Let $X$ be a $9(r+3)$-atom of $B$. Then \be\label{lunge}|X+B|-|X|=\kappa_{9(r+3)}(B)\leq |B|+r\;\und\;|X+B|\leq |X|+|B|+r.\ee
In view of \eqref{p-bigwithr} and $r\geq 1$, we can apply Corollary \ref{cor-atombounds}.3 to conclude that \be\label{X-size}|X|=\alpha_{9(r+3)}(B)\leq 10r+26.5+\sqrt{2r+\frac{17}{4}}\leq \frac{1963}{9253}p,\ee with the latter inequality holding by hypothesis.
Since $B$ is $9(r+3)$-separable with $X$ a $9(r+3)$-atom of $B$, it follows by definition of an atom that $|X+B|\leq p-9(r+3)$. Thus, since $|B|\geq 9(r+3)$, it follows that $X$ is also $9(r+3)$-separable with $\kappa_{9(r+3)}(X)\leq |X+B|-|B|\leq |X|+r$.  Let $Y$ be a $9(r+3)$-atom of $X$. Then \be\label{X+Y}|X+Y|-|Y|\leq \kappa_{9(r+3)}(X)\leq |X|+r\;\und\;|X+Y|\leq |X|+|Y|+r.\ee Since $X$ and $Y$ are $9(r+3)$-atoms, we have \be\label{startitup}|X|,\,|Y|\geq 9(r+3)\;\und\; |X+Y|\leq p-9(r+3),\ee and Corollary \ref{cor-atombounds}.3  again gives  \be\label{Y-size}|Y|=\alpha_{9(r+3)}(X)\leq 10r+26.5+\sqrt{2r+\frac{17}{4}}\leq \frac{1963}{9253}p,\ee with the latter inequality holding by hypothesis.

In view of \eqref{X+Y} and \eqref{startitup}, we have
\be\label{X+Y-hyp}|X+Y|\leq |X|+|Y|+r\leq \min\Big\{|X|+|Y|+\frac19\min\{|X|,\,|Y|\}-3,\; p-9(r+3)\Big\}.\ee In view of \eqref{X-size}, \eqref{Y-size} and \eqref{startitup}, we have
\be\label{beta-ratio} \frac{\min\{|X|,|Y|\}}{\max\{|X|,|Y|\}}\geq \frac{9r+27}{10r+26.5+\sqrt{2r+\frac{17}{4}}}\geq 0.8877,\ee with the latter inequality following by  a routine calculus minimization question showing that the above expression is minimized for $r=24$.
Now \eqref{X-size}, \eqref{Y-size},  \eqref{X+Y-hyp} and \eqref{beta-ratio} allow us to  apply Corollary \ref{cor-fouier-II-distinct} to $X+Y$ to conclude that \be\label{digup}\ell_d(X)\leq |X|+r+1\;\und\;\ell_d(Y)\leq |Y|+r+1\ee for some nonzero $d\in G$.
We have  $|B|,|X|\geq 9(r+3)\geq 3r+6$ and $|X+B|\leq p-9(r+3)\leq p-(3r+6)$, so \eqref{lunge} and  \eqref{digup} allow us to  apply Theorem \ref{thm-mario-apred}.1 (with $h=r+1$) to the additive trio $(X,B,-(X+B)^\mathsf c)$ to conclude that $$\ell_d(B)\leq |B|+r+1.$$ We again have $|A|,|B|,|C|\geq 9(r+3)\geq 3r+6$, so this allows us to apply Theorem \ref{thm-mario-apred}.1 (with $h=r+1$) to the additive trio $(B,A,C)$ to conclude that $\ell_d(A)\leq |A|+r+1$, $\ell_d(B)\leq |B|+r+1$ and $\ell_d(C)\leq |C|+r+1$, which is the desired conclusion of the theorem, completing the proof.
\end{proof}

\begin{lemma}
\label{lem-sqrt-shortcalc}
Let $r\geq -1$ be an integer. Then $\sqrt{2r+\frac{17}{4}}\leq \frac{10}{3}+0.16797\,r$.
\end{lemma}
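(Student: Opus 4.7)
The plan is to square both sides and reduce the inequality to showing that an explicit upward-opening parabola in $r$ is non-negative at each integer $r\geq -1$, then to check the one or two integers nearest its vertex by direct calculation.

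For $r\geq -1$ both sides are positive (the right side is at least $\frac{10}{3}-0.16797>0$), so squaring is reversible, and the claim becomes $q(r)\geq 0$ for every integer $r\geq -1$, where
\[
q(r):=(0.16797)^{2}\,r^{2}-\Big(2-\tfrac{20\cdot 0.16797}{3}\Big)r+\Big(\tfrac{100}{9}-\tfrac{17}{4}\Big).
\]
Since $q$ is convex with leading coefficient $(0.16797)^{2}>0$, it has a unique real minimizer
\[
r^{*}=\frac{2-20\cdot 0.16797/3}{2\,(0.16797)^{2}},
\]
and a short numerical computation gives $r^{*}\approx 15.60$. Because $r^{*}$ lies strictly inside the open interval $(15,16)$, the function $q$ is decreasing on $[-1,15]$ and increasing on $[16,\infty)$. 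Consequently $\min\{q(r):r\in\Z,\,r\geq -1\}=\min\{q(15),q(16)\}$, so it suffices to verify that both $q(15)\geq 0$ and $q(16)\geq 0$. Explicit evaluation will give $q(15)\approx 6.3\times 10^{-3}$ and $q(16)\approx 6.9\times 10^{-4}$, both positive, establishing the lemma.

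The only genuine obstacle is numerical precision: the claimed inequality is exceedingly tight at $r=16$, with $q(16)$ of order $10^{-3}$, so the quantities $(0.16797)^{2}$ and $20\cdot 0.16797/3$ must be expanded and carried with at least four or five decimal digits to ensure that rounding does not flip the sign of the verification. (Indeed, the minimum value of $q$ over the reals is itself negative, of order $-10^{-3}$; it is only the fact that no integer falls inside the narrow interval where $q<0$ that saves the inequality.) With this care taken, the two-point check combined with the convexity/monotonicity argument closes the proof in a single step.
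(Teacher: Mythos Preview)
Your proof is correct. Both you and the paper reduce the inequality to a one-variable optimization over integers, locate the critical region near $r\approx 15.6$, and verify the nearest integers directly. The paper instead rewrites the inequality (for $r\geq 1$) as $\frac{\sqrt{2r+17/4}-10/3}{r}\leq 0.16797$, differentiates the left side to find its integer maximizer at $r=16$, and checks that value, handling $r\in\{-1,0\}$ separately. Your squaring maneuver is slightly cleaner: it turns the problem into checking a single explicit parabola and so treats all $r\geq -1$ uniformly without a case split, at the mild cost of needing to watch numerical precision at $r=16$ (which you flag). The paper's rearrangement, on the other hand, makes it transparent that the constant $0.16797$ is essentially sharp, since it is chosen just above the maximum of the left-hand ratio.
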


\begin{proof}The desired inequality is true for $r\in \{-1,0\}$, so we can assume $r\geq 1$ and
seek to maximize the expression $\frac{\sqrt{2r+\frac{17}{4}}-\frac{10}{3}}{r}$ for $r\geq 1$ an integer. The derivative of this expression is $\frac{-51-12r+20\sqrt{17+8r}}{6r^2\sqrt{17+8r}}$, which is positive for $r=1$ and negative for sufficiently large $r$ having a unique root (larger than $1$) at $r\approx 15.5923$, with  the expression then maximized when $r=16$. Thus $\frac{\sqrt{2r+\frac{17}{4}}-\frac{10}{3}}{r}\leq \frac{\sqrt{32+\frac{17}{4}}-\frac{10}{3}}{16}\leq 0.16797$, as desired.
\end{proof}

Removing the unwanted hypothesis $\frac{9253}{1963}\Big(10r+26.5+\sqrt{2r+\frac{17}{4}}\Big)\leq p$ from Theorem \ref{thm-r-O(p)-a}  is simply a matter of imposing a sufficiently stronger small sumset hypothesis that implicitly forces this needed hypothesis. Thus we now proceed with the proof of Theorem \ref{thm-p-O(r)}.

\begin{proof}[Proof of Theorem \ref{thm-p-O(r)}]
If we can show $\frac{9253}{1963}\Big(10r+26.5+\sqrt{2r+\frac{17}{4}}\Big)\leq p$ is implied by the hypotheses of Theorem \ref{thm-p-O(r)}, then we can apply Theorem \ref{thm-r-O(p)-a} to $A+B$ to complete the proof. Thus, in view of Lemma \ref{lem-sqrt-shortcalc}, it suffices to show \be\label{goalto}\frac{9253}{1963}\Big(10.16797\,r+26.5+\frac{10}{3}\Big)\leq p.\ee
By hypothesis of Theorem \ref{thm-p-O(r)}, we have $2(r+3)\alpha^{-1}+r\leq |A|+|B|+r=|A+B|\leq p-9(r+3)$, where $\alpha=0.0527$, implying \be\label{hotub}(2\alpha^{-1}+10)r+(27+6\alpha^{-1})\leq p.\ee
Since $(2\alpha^{-1}+10)\geq \frac{9253}{1963}\cdot 10.16797$ and $(27+6\alpha^{-1})\geq \frac{9253}{1963}\Big(26.5+\frac{10}{3}\Big)$, \eqref{hotub} implies \eqref{goalto}, as needed.
\end{proof}

Theorem \ref{thm-p-O(r)} is not the only variation derivable from Theorem \ref{thm-r-O(p)-a}. For instance, the hypothesis $\frac{9253}{1963}\Big(10r+26.5+\sqrt{2r+\frac{17}{4}}\Big)\leq p$ can also be removed by increasing the density restriction.

\begin{corollary}\label{cor1}
Let $G=\Z/p\Z$ with $p\geq 2$ a prime, let $A,\, B\subseteq G$ be nonempty subsets with $A+B\neq G$,  and set $C=-\,G\setminus (A+B)$. Suppose
  $|A|\geq |B|$   and  \begin{align*}&|A+B|=|A|+|B|+r\leq \min\{|A|+\frac19|B|-3,\quad p-29(r+3)\}.\end{align*}
Then there are arithmetic progressions $P_A,\,P_B,\,P_{C}\subseteq G$ all having  common difference  such that
\begin{align*} &X\subseteq P_X\;\und\; |P_X|\leq |X|+r+1\quad \mbox{ for all $X\in \{A,B,C\}$}.\end{align*}
\end{corollary}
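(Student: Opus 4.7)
The strategy is exactly parallel to the deduction of Theorem \ref{thm-p-O(r)} from Theorem \ref{thm-r-O(p)-a}: since the small doubling bound $|A+B|\leq |A|+\frac{1}{9}|B|-3$ already matches the $\alpha=\tfrac{1}{9}$ threshold in Theorem \ref{thm-r-O(p)-a}, all that remains is to verify that the stronger density restriction $|A+B|\leq p-29(r+3)$ is enough to force the technical hypothesis
\[
\frac{9253}{1963}\Big(10r+26.5+\sqrt{2r+\tfrac{17}{4}}\Big)\;\leq\;p
\]
of Theorem \ref{thm-r-O(p)-a}; once this is done, Theorem \ref{thm-r-O(p)-a} yields the desired progressions $P_A$, $P_B$, $P_C$ directly.

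First I would note that $A+B\neq G$ together with the Cauchy-Davenport Theorem (Theorem \ref{thm-cdt}) gives $r\geq -1$.  Rewriting the hypothesis $|A|+|B|+r=|A+B|\leq |A|+\tfrac{1}{9}|B|-3$ yields $r+3\leq \tfrac{1}{9}|B|$, that is, $|B|\geq 9(r+3)$.  Combining this with $|A|\geq |B|$ and the density restriction gives
\[
2\cdot 9(r+3)+r\;\leq\;|A|+|B|+r\;=\;|A+B|\;\leq\;p-29(r+3),
\]
which rearranges to $48r+141\leq p$.

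Next, invoking Lemma \ref{lem-sqrt-shortcalc} to bound $\sqrt{2r+\tfrac{17}{4}}\leq \tfrac{10}{3}+0.16797\,r$, it suffices to check that
\[
\frac{9253}{1963}\Big(10.16797\,r+26.5+\tfrac{10}{3}\Big)\;\leq\;48r+141.
\]
This reduces to the two numerical inequalities $\tfrac{9253}{1963}\cdot 10.16797\leq 48$ and $\tfrac{9253}{1963}\cdot\bigl(26.5+\tfrac{10}{3}\bigr)\leq 141$, both of which are easily verified (the left-hand sides evaluate to roughly $47.934$ and $140.618$, respectively).  Hence the bound $48r+141\leq p$ proven above implies the technical hypothesis of Theorem \ref{thm-r-O(p)-a}, and an application of that theorem finishes the proof.

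I do not anticipate any real obstacle: the corollary is essentially a bookkeeping consequence of Theorem \ref{thm-r-O(p)-a}, designed precisely so that the coefficients $29$ and $\tfrac{1}{9}$ absorb the constants $\tfrac{9253}{1963}\cdot 10.16797$ and $\tfrac{9253}{1963}\cdot 29.833\ldots$ appearing there.  The only edge case worth explicitly noting is $r=-1$, where $48r+141=93$ while $\tfrac{9253}{1963}\cdot 18\approx 84.85$, so the needed inequality still holds at the boundary.
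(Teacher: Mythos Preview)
Your proposal is correct and follows essentially the same route as the paper: derive $|B|\geq 9(r+3)$ from the small-doubling hypothesis, combine with $|A+B|\leq p-29(r+3)$ to obtain $48r+141\leq p$, and then use Lemma~\ref{lem-sqrt-shortcalc} to verify the technical hypothesis of Theorem~\ref{thm-r-O(p)-a}. The paper's proof is slightly terser but the logic and the numerical checks $48\geq \frac{9253}{1963}\cdot 10.16797$ and $141\geq \frac{9253}{1963}\big(26.5+\tfrac{10}{3}\big)$ are identical to yours.
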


\begin{proof}
As in the proof Theorem \ref{thm-p-O(r)}, it suffices to show \eqref{goalto} holds as then Theorem \ref{thm-r-O(p)-a} can be applied to $A+B$ to complete the proof. By hypothesis, we have $18(r+3)+r\leq |A|+|B|+r=|A+B|\leq p-29(r+3)$, implying $48r+141\leq p$.
Since $48\geq \frac{9253}{1963}\cdot 10.16797$ and $141\geq \frac{9253}{1963}\Big(26.5+\frac{10}{3}\Big)$, this implies  \eqref{goalto}, as needed.
\end{proof}

We would like to use the improved mid-range density result from \cite{lev-3k-4-highenrg} in the proof of Theorem \ref{thm-ideal-density}. Unfortunately, as stated, it includes the hypothesis $|A|\geq 101$ in order to yield a slightly better small doubling constraint $\alpha<0.59|B|-3$. This makes their result unusable for our purposes since we will need to apply it to an auxiliary set $X$ that may be much smaller than $A$. However, it is a simple matter to vary the calculations at the end of their proof to obtain the following flexible version of Lev and Shkredov's result valid for smaller sets.  While the proof only requires $K\leq 3$, we need $c>0$ for the result to be non-void, which requires $K\lesssim 2.595$. Additionally, by using Proposition \ref{lemma-four-case1} and Lemma \ref{lem-freiman-circle} instead of   \cite[Lemma 2]{lev-rect-threshold}, we extend the arguments of Lev and Shkredov to higher densities $|A|\leq \frac{3}{4K}p$ rather than $|A|<\frac{1}{12}p$. Note, the methods of \cite{pablo-grynk-3k-4-A=B} (based on the original fourier sum estimates of Freiman and Rodseth) still yield better constants at high densities (roughly once $K\leq 2.349$ and $|A+A|\geq 0.364\,p$, for $s=7$), so the improvements here are of principal interest for the   higher mid-range densities we will encounter in the proof of Theorem \ref{thm-ideal-density}.

\begin{theorem}\label{thm-lev-sh-fixed}
Let $G=\Z/p\Z$ with $p\geq 2$ a prime, let $A\subseteq G$ be a nonempty subset with $2A\neq G$,  and set $C=-\,G\setminus (2A)$.
Let  $K\in (2,3)$ be a  real number, let $s\geq K^2$ and let $$c=\frac{-27K+9K^2+s(9+9K-9K^2+12K^3-4K^4)}{4s(3-K)K^4}.$$
Suppose  $s\leq |A|\leq  c\,p$,\; $|2A|=2|A|+r\leq \frac34\,p$ and $|2A|<K|A|-3$.
Then there are arithmetic progressions $P_A,\,P_{C}\subseteq G$ both having  common difference  such that $X\subseteq P_X$ and $|P_X|\leq |X|+r+1$ for all $X\in \{A,C\}$.
\end{theorem}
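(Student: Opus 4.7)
The plan is to follow the Fourier/energy-method proof of Lev and Shkredov \cite{lev-3k-4-highenrg} for $A=B$, substituting the higher-density rectification output of Proposition \ref{lemma-four-case1} together with Lemma \ref{lem-freiman-circle} in place of their appeal to Bilu--Lev--Ruzsa rectification. First I would verify that the hypotheses $|2A|\leq\tfrac34 p$ and $|2A|<K|A|-3$ (with $K<3$) force $|A|>r+3$, and hence $p>4|A|-4>4r+8$, which is exactly what Proposition \ref{lemma-four-case1} requires. If there exists a subset $A'\subseteq A$ with $A'+A'$ rectifying and $3|A'|-4\geq|2A|$, equivalently $|A'|\geq(\ell+4)/3$ where $\ell:=|2A|$, then Proposition \ref{lemma-four-case1} (applied with $B=A$ and $B'=A'$) immediately supplies the required progressions $P_A$ and $P_C$. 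Thus we may assume every rectifying $A'\subseteq A$ satisfies $|A'|\leq\ell/3+1$, with the goal of deriving a contradiction.

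Next, I would convert this rectification restriction into a uniform exponential-sum bound. For any nonzero $d\in\Z/p\Z$ and any open half-arc $C_u$ of the complex unit circle, the $p$-th roots of unity in $C_u$ correspond via multiplication by $d^{-1}$ to an arithmetic progression in $\Z/p\Z$ of length at most $(p+1)/2$, so the set $A_{u,d}:=\{a\in A:\exp(da/p)\in C_u\}$ rectifies, whence $|A_{u,d}|\leq\ell/3+1$. Lemma \ref{lem-freiman-circle} then yields
\[
|S_A(d)|\;\leq\;2(\ell/3+1)-|A|\;=\;\tfrac{2\ell}{3}+2-|A|\;=:\;\gamma\;<\;\tfrac{(2K-3)|A|}{3}
\]
for every nonzero $d\in\Z/p\Z$, using $\ell<K|A|-3$. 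This bound plays exactly the role of Bilu--Lev--Ruzsa rectification in \cite{lev-3k-4-highenrg}, but is now valid in the higher density range $|A|\leq\tfrac{3}{4K}p$ rather than $|A|<p/12$.

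Finally, I would feed this uniform bound $\gamma$ into the higher-moment Fourier/energy estimate of Lev and Shkredov: combine the lower bound $\sum_x|S_A(x)|^{2s}\geq p|A|^{2s}/|sA|$ (Cauchy--Schwarz applied to the $s$-fold representation function, via Parseval) with the upper bound $\sum_{x\neq 0}|S_A(x)|^{2s}\leq\gamma^{2s-2}|A|(p-|A|)$ coming from $|S_A(d)|\leq\gamma$, and plug in the Pl\"unnecke bound $|sA|\leq(|2A|/|A|)^{s-1}|A|<(K-3/|A|)^{s-1}|A|$ from Theorem \ref{thm-plunnecke-petridis}.2. The resulting inequality, after substituting $\gamma=(2K-3)|A|/3$ and $\ell=K|A|-3$, absorbing the lower-order $-3$ correction via the hypothesis $|A|\geq s$, and rearranging in the variable $z:=p/|A|$, yields $z\leq 1/c$ with $c$ exactly the quantity in the statement. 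The asymptotic part $f(K)/(4(3-K)K^4)$ arises from the $|A|\to\infty$ limit, while the correction $-9/(4sK^3)$ tracks the additive $-3$ shift through the $|A|\geq s$ substitution. This contradicts $|A|\leq cp$, forcing the first alternative of Proposition \ref{lemma-four-case1} to hold and supplying both $P_A$ and $P_C=-(P_{2A})^\mathsf{c}$.

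The main obstacle is this final step: executing the $s$-th moment energy calculation with enough precision to match the stated explicit formula for $c$, in particular extracting the $-9/(4sK^3)$ correction cleanly. The hypothesis $K<3$ gives positivity of $(3-K)$ in the denominator of $c$, while the non-voidness condition $c>0$ (roughly $K\lesssim 2.595$, where the numerator of the asymptotic term becomes positive) and the constraint $s\geq K^2$ ensure that the asymptotic term of $c$ dominates the $1/s$ correction and that Pl\"unnecke-style bounds like $(K-3/|A|)^{s-1}$ can be safely linearized.
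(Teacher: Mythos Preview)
Your opening reductions are exactly right and match the paper: verifying $p\geq 4r+9$ and $|2A|\leq\frac34 p$ so that Proposition~\ref{lemma-four-case1} applies, reducing to the case where every rectifying $A'\subseteq A$ has $|A'|\leq \ell/3+1$, and then invoking Lemma~\ref{lem-freiman-circle} to obtain the uniform bound $|S_A(d)|<\frac{2K-3}{3}|A|$ for all nonzero $d$. This part is fine and is precisely how the paper replaces the Bilu--Lev--Ruzsa input in \cite{lev-3k-4-highenrg}.

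The gap is in your final Fourier step: you have misread the role of $s$. In the statement, $s$ is simply a real lower bound on $|A|$ (any $s\geq K^2$), not a moment order, and the Lev--Shkredov calculation the paper is quoting is a \emph{fourth-moment} (additive energy) argument, not a $2s$-th moment one. Writing $\eta|A|=\max_{d\neq 0}|S_A(d)|$, the key input from \cite{lev-3k-4-highenrg} is the refined energy lower bound
\[
E(A)\;\geq\;\Bigl(1+\tfrac{1}{K'}-\tfrac{3-K'}{|A|}\Bigr)|A|^3,\qquad K'=\frac{|2A|}{|A|},
\]
which, combined with the upper bound coming from $|S_A(d)|\leq\eta|A|$ and Parseval, yields (after a monotonicity argument in $K'$, using $|A|\geq s\geq K^2$, and substituting $|A|\leq cp$)
\[
\eta^2\;\geq\;\frac{1}{K(1-cK)}\Bigl(1+\tfrac{1}{K}-\tfrac{3-K}{|A|}-cK^2\Bigr)\;\geq\;\frac{1}{K(1-cK)}\Bigl(1+\tfrac{1}{K}-\tfrac{3-K}{s}-cK^2\Bigr).
\]
The parameter $s$ enters \emph{only} in that last substitution $(3-K)/|A|\leq (3-K)/s$, which is why the dependence of $c$ on $s$ is linear in $1/s$; comparing with $\eta<\tfrac{2K-3}{3}$ then produces exactly the stated formula for $c$ after routine algebra. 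Your proposed $2s$-th moment argument combined with a Pl\"unnecke bound on $|sA|$ would instead generate an inequality with exponential dependence on $s$ through factors like $K^{s-1}$ and $\eta^{2s-2}$, which cannot recover the stated $c$ (and would not even make sense for non-integer $s$). So the obstacle you flagged is real, but the fix is not sharper bookkeeping in a higher-moment computation; it is to use the correct (fourth-moment) energy inequality from \cite{lev-3k-4-highenrg}.
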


\begin{proof}
  The proof is nearly identical to \cite[Theorem 4]{lev-3k-4-highenrg} apart from the usage of Proposition \ref{lemma-four-case1} and Lemma \ref{lem-freiman-circle} in place of \cite[Corollary 1]{lev-3k-4-highenrg}.
  We sketch the minor differences. Viewing $cK$ as a function of $s$ and $K$, it has positive derivative with respect to $s$ with $cK\rightarrow \frac{9+9K-9K^2+12K^3-4K^4}{4(3-K)K^4}<1$ (as $2<K<3$). This show that $1-cK>0$ always holds.
Set  $K':=\frac{|2A|}{|A|}<K$. While their notation is different from ours (their $\hat A(\chi)$ for $\chi \in \hat{\mathbb F}_p$ are the same as our  $S_A(x)$ for $x\in [0,p-1]$), defining $\eta$ by $\max\{|S_A(x)|: x\in [1,p-1]\}=\eta|A|$ agrees with how $\eta$ is defined in \cite[Proof Theorem 4]{lev-3k-4-highenrg}. Since the derivative (with respect to $K'$) of $\frac{1}{K'}+\frac{K'}{|A|}$ is $\frac{1}{|A|}-\frac{1}{(K')^2}<\frac{1}{|A|}-\frac{1}{K^2}\leq  0$ (in view of $|A|\geq s\geq K^2$), the fourier calculation in \cite[Proof Theorem 4]{lev-3k-4-highenrg} ending with the estimate $\geq \Big(1+\frac{1}{K'}-\frac{3-K'}{|A|}\Big)|A|^3$ is made smaller by replacing $K'$ with its upper bound $K$, allowing us to assume $K=K'$ for the numerical calculations from that point onward. The inequality
  \be\label{tacky}\eta^2\geq \frac{1}{K(1-c K)}\Big(1+\frac{1}{K}-\frac{3-K}{|A|}-c K^2\Big)\geq \frac{1}{K(1-c K)}\Big(1+\frac{1}{K}-\frac{3-K}{s}-c K^2\Big)\ee follows by the authors' arguments, with the latter inequality holding since $K\leq 3$ allows us to apply the hypothesis $|A|\geq s$.

  Let $A'\subseteq A$ be a maximal cardinality subset such that $A'+A'$ rectifies. Let $n=|A'|$. We have $|2A|\leq \frac34\,p$ and  $2|A|+r=|2A|<K|A|-3$ by hypothesis, whence $r<(K-2)|A|-3$. Thus $\frac34\,p\geq |2A|=2|A|+r> \frac{2(r+3)}{K-2}+r\geq 3r+6$, implying $4r+9\leq p$. This gives us the hypotheses of Proposition \ref{lemma-four-case1}. As a result, if
$3n-4\geq |2A|$, then Proposition \ref{lemma-four-case1} yields the desired conclusion. Therefore we can instead assume $n\leq \frac13(|2A|+3)<\frac{1}{3}K|A|$. This ensures, for any $x\in [1,p-1]$, that less than   $\frac{1}{3}K|A|$ of the terms in the fourier sum $S_A(x)$ lies in an open half arc. Otherwise the inverse of $x$ modulo $p$ will be the difference of an arithmetic progression of length at most $\frac{p+1}{2}$ containing at least $\frac{1}{3}K|A|$ elements of $A$ (as argued in Claim A of the proof of Theorem \ref{fouier-II-distinct}), contrary to what was just concluded. Applying Lemma \ref{lem-freiman-circle}, we conclude that $\eta|A|<\frac23K|A|-|A|$, whence $\eta<\frac23K-1$.
Applying this estimate  in \eqref{tacky} yields a linear inequality in $c$ that (in view of $1-cK>0$) implies   $c>\frac{-27K+9K^2+s(9+9K-9K^2+12K^3-4K^4)}{4s(3-K)K^4}$ after rearranging all expressions, which is contrary to hypothesis.
\end{proof}

\section{Optimal Density}

The final section is devoted to the proof of Theorem \ref{thm-ideal-density}. We take the  general approach of  \cite{serra-3k-4-plunnecke} as a broad outline, though we modify and add to the arguments and incorporate several new advances, including our Theorem \ref{thm-p-O(r)}, the reduction result of Huichochea \cite{huicochea-reduction-3k-4}, the mid-range density result of Lev and Shkredov \cite{lev-3k-4-highenrg} as modified in Theorem \ref{thm-lev-sh-fixed}, and  the improved bounds for $2$-atoms given in \cite{Grynk-book} \cite{hyperatoms}. The use of   \cite[Lemma 4.5]{serra-3k-4-plunnecke} is replaced by an alternative reduction argument utilizing Theorem \ref{thm-mario-apred}.

%\begin{lemma}\label{lem-density}
%Let $n\geq 1$, let $G=\Z/n\Z$, let $\ell\in [1,n]$, and let $A\subseteq \Z/n\Z$. Then there exists some $x\in G$ such that $|(x+[1,\ell]\mod n)\cap A|\leq \frac{\ell}{n}|A|$.
%\end{lemma}

%\begin{proof}Let $I=[1,\ell]\mod n$, so $I\subseteq G$ is an arithmetic progression with difference $1$ and length $\ell$.
%For each $a\in A$, there are exactly $\ell$ elements $x\in G$ with $a\in x+I$. Thus $\Summ{x\in G}|(x+I)\cap A|=\ell|A|$, so the Pigeonhole Principle ensures that there is some $x\in G$ with $|(x+I)\cap A|\leq \frac{\ell|A|}{|G|}=\frac{\ell}{n}|A|$.
%\end{proof}

\begin{proof}[Proof of Theorem \ref{thm-ideal-density}]
Since $A+B\neq G$, we have $r\geq -1$ by the Cauchy-Davenport Theorem (Theorem \ref{thm-cdt}).
Let $$\gamma=0.01$$
In view of the hypotheses, we have
\be\label{ABC-size} |A|\geq |B|\geq \gamma^{-1}(r+3)=100(r+3)\quad\und\quad |C|\geq r+3.\ee
If $r\leq 0$, then (as mentioned in the Small $r$ Subsection of the Introduction), Conjecture \ref{conj-crit-pair} is known yielding the desired result. If $r=1$, then \eqref{ABC-size} ensures $|A|\geq |B|\geq 6$ and $|C|\geq 4$, in which case Conjecture \ref{conj-crit-pair} holds for $A+C$ (as mentioned in the Small $r$ Subsection of the Introduction), yielding the desired result for $A+B$ as well (via Proposition \ref{lemma-dual}). Therefore we can assume $$r\geq 2.$$
Our hypotheses ensure
$2\gamma^{-1}(r+3)+r\leq |A|+|B|+r=|A+B| \leq p-r-3$, whence
\be\label{r-p-hyp}p\geq 2(\gamma^{-1}+1)r+6\gamma^{-1}+3\geq 202r+603\geq 1007,\ee with the final inequality in view of $r\geq 2$.

Since $|A|,|B|,|C|\geq r+3$, it follows that $A$ is $(r+3)$-separable with $\kappa_{r+3}(A)\leq |A+B|-|B|\leq |A|+r$. Let $X\subseteq G$ be an $(r+3)$-atom for $A$. Then $|X|\geq r+3$, $|X+A|\leq p-r-3$ and $|A+X|-|X|=\kappa_{r+3}(A)\leq |A|+r$, implying
\be\label{AX-1} |A+X|\leq |A|+|X|+r.\ee
Since $r\geq 1$, \eqref{r-p-hyp} allows us to apply Corollary \ref{cor-atombounds} yielding
\be\label{atombounds-init}r+3\leq |X|\leq\Big\lfloor 2r+2.5+\sqrt{2r+\frac{17}{4}}\Big\rfloor.\ee
By calculating its derivative, the expression $\frac{3r+2.5+\sqrt{2r+\frac{17}{4}}}{r+3}$ is maximized when $r=84$ via basic calculus. If $r\in [2,79]\cup [89,\infty)$, then this expression is maximized at $r=89$, which combined with \eqref{atombounds-init} yields \be\label{X+r+3-upper}|X|+r\leq \frac{283}{92}(r+3).\ee Taking advantage of the fact that $|X|$ is an integer, it can be manually checked that \eqref{atombounds-init} yields \eqref{X+r+3-upper} for $r\in [80,89]$ as well, meaning \eqref{X+r+3-upper} holds in general. Similar analysis (checking the values $r\in [19,25]$ manually for the first case below), shows that
\be\label{X+r+3-upper-alt}|X|+r\leq 3.143\,(r+2),\quad |X|+r\leq 3.1\,(r+2.5),\quad |X|+r\leq 3.0885\,(r+2.7) .\ee

%If we wish to obtain a bound of the form $2r+2.5+\sqrt{2r+\frac{17}{4}}\leq 5+(2+\epsilon)r$, then $\epsilon$ is found by maximizing the expression $\frac{-2.5+\sqrt{2r+\frac{17}{4}}}{r}$. By calculating its derivative, this expression is maximized when $r=6$. If $r\geq 8$, then this expression is minimized at $r=8$, which combined with  \eqref{atombounds-init} yields \be\label{X+r-upper}|X|\leq 5+ \frac94\,r=5+2.25\, r.\ee Taking advantage of the fact that $|X|$ is an integer, it can be manually checked that \eqref{atombounds-init} yields \eqref{X+r-upper} for $r\in [2,7]$ as well, meaning \eqref{X+r-upper} holds in general.

Using \eqref{AX-1} and the Ruzsa-Pl\"unnecke Bounds (Theorem \ref{thm-plunnecke-petridis}), we find a nonempty subset $A'\subseteq A$ such that
\be\label{A'-nX-pluneck}|A'+nX|\leq \Big(\frac{|A+X|}{|A|}\Big)^n|A'|\leq \Big(1+\frac{|X|+r}{|A|}\Big)^n|A'|\quad\mbox{ for all $n\geq 1$}.\ee
In particular, letting $$1-\epsilon =|A'|/|A|,$$
we have $\epsilon\in [0,1)$ and
\be\label{plunnecke-n=1} |A'+X|\leq |A'|+(1-\epsilon)(|X|+r)=|A'|+|X|+r-\epsilon(|X|+r).\ee
Since $|A'+X|\leq |A+X|\leq p-r-3<p$, the Cauchy-Davenport Theorem (Theorem \ref{thm-cdt}) ensures $|A'+X|\geq |A'|+|X|-1$, which combined with \eqref{plunnecke-n=1} and $2\leq r\leq |X|-3$ forces $-1\leq r-\epsilon(|X|+r)\leq |X|-3-\epsilon(2|X|-3)$, in turn implying $\epsilon\leq \frac{|X|-2}{2|X|-3}<\frac12$.
Thus (from \eqref{ABC-size}) \be\label{epsilon-ratio}0\leq \epsilon<\frac12\;\und\;|A'|>\frac12|A|\geq \frac12\gamma^{-1}(r+3)=50r+150.\ee

In view of \eqref{ABC-size} and \eqref{X+r+3-upper}, we have
\be\label{XroverA}\frac{|X|+r}{|A|}\leq \frac{|X|+r}{\gamma^{-1}(r+3)}\leq  \frac{283}{92}\gamma\leq 0.0308.\ee

We claim that  \be\label{stillness}(1+z)^n\leq 1+nz+0.466\,n^2z^2=1+nz(1+0.466\, nz)\ee for any non-negative real number $z\leq 0.0313$ and any positive integer $n\leq 8$. Indeed, for $n=2$, we have $1+nz+0.466n^2z^2-(1+z)^n=0.864z^2\geq 0$, while for $n\geq 3$ the smallest positive root of $1+nz+0.466n^2z^2-(1+z)^n$, for $n\in [3,8]$, is always greater than $0.0313$ with this expression positive for $0<z\leq 0.0313$, establishing \eqref{stillness}.

Now  \eqref{XroverA} allows us to apply \eqref{stillness} with $z=\frac{|X|+r}{|A|}$, which combined with  \eqref{A'-nX-pluneck}, \eqref{XroverA} and $\gamma=0.01$ yields
\begin{align}\nn|A'+nX|&\leq |A'|+(|A'|/|A|)n(|X|+r)\Big(1+0.466\cdot n\big(\frac{|X|+r}{|A|}\big)\Big)\\
&\leq |A'|+(|A'|/|A|)n(|X|+r)\Big(1+0.01434\cdot n\Big)\quad\mbox{ for all $n\in [2,8]$}.
\label{A'-nX-simpl}
\end{align}
By hypothesis, $|A|+\gamma^{-1}(r+3)+r\leq |A|+|B|+r=|A+B|\leq p-r-3$, implying
\be\label{p-Abound}p-|A|\geq (\gamma^{-1}+1)(r+3)+r=102r+303.\ee
Using the estimates $|A'|\leq |A|$, \eqref{X+r+3-upper} and \eqref{p-Abound} in \eqref{A'-nX-simpl}, we find that \begin{align}\nn|A'+8X|&\leq |A|+8\frac{283}{92}(r+3) (1+0.01434\cdot 8)\\&\leq |A|+27.432(r+3)\leq p-102r-303+27.432(r+3)\leq p-5.\nn\end{align}
This allows us to apply the Cauchy-Davenport Theorem (Theorem \ref{thm-cdt}) to $A'+8X$ (or any sub-sumset later in the proof). Doing so,  we see that \eqref{A'-nX-simpl} implies
\be\label{nX-simple} |nX|\leq (|A'|/|A|)n(|X|+r)\Big(1+0.01434\cdot n\Big)+1\quad\mbox{ for all $n\in [2,8]$}.\ee

By calculating its derivative, basic calculus shows that  $26.5-0.246719\,r'+\sqrt{2r'+\frac{17}{4}} $ is maximized (over all integers $r'\geq -1$) for $r'=6$, whence $26.5+\sqrt{2r'+\frac{17}{4}}\leq 0.246719\,r'+29.05082$. Hence
\be\label{r'-hyp}\frac{9253}{1963}\Big(10\,r'+26.5+\sqrt{2r'+\frac{17}{4}}\Big)\leq 48.3\,r'+136.937\quad\mbox{ for all integers $r'\geq -1$.}\ee

\textbf{Claim A:}  If $\ell_d(A')\leq |A'|+48.9\,r+145.3$  for some nonzero $d\in G$, then all desired conclusions in Theorem \ref{thm-ideal-density} follow.

\begin{proof}[Proof of Claim A]
Let $|A'+X|=|A'|+|X|+r'$, let $h=48.9\,r+145.3$, and let $C'=-(A'+X)^\mathsf c$.  By \eqref{plunnecke-n=1}, we have $r'\leq r$.  By \eqref{epsilon-ratio},  we have
$|A'|> 50r+150\geq r+4+h$. By \eqref{atombounds-init}, we have $|X|\geq r+3$. By \eqref{AX-1}, \eqref{X+r+3-upper} and \eqref{p-Abound}, we have \be\label{chainit}|A'+X|\leq |A+X|\leq |A|+|X|+r\leq p-102\,r-303+\frac{283}{92}(r+3),\ee whence $|C'|\geq 98.9\,r+293.7\geq 2h+r+3$. Thus we can apply Theorem \ref{thm-mario-apred}.1 to the additive trio $(A',C',X)$ (with the $h$ in Theorem \ref{thm-mario-apred} taken to be $\lfloor h\rfloor$) to conclude that $\ell_d(X)\leq |X|+r+1$.

Let $C''=-(A+X)^\mathsf c$.  By \eqref{chainit}, We have $|A+X|\leq |A|+|X|+r\leq p-98.9\,r-293.7<p-(r+3)$, whence $|C''|\geq r+4$.  By \eqref{ABC-size}, \eqref{X+r+3-upper} and \eqref{atombounds-init},  we have $|A|\geq 100(r+3)>\frac{283}{92}(r+3)\geq |X|\geq r+3$. By \eqref{r-p-hyp}, we have $p\geq 35r+45$. Thus we can apply  Theorem \ref{thm-mario-apred}.2 to the additive trio $(X,A,C'')$ (with $h=r+1$) to conclude that $\ell_d(A)\leq |A|+r+1$.

By \eqref{ABC-size}, we have $|A|\geq |B|\geq 100(r+3)\geq 3r+6$ and $|C|=p-|A+B|\geq r+3$. Thus we can apply  Theorem \ref{thm-mario-apred}.1 (with $h=r+1$) to the additive trio $(A,B,C)$ to yield all desired conclusions for Theorem \ref{thm-ideal-density}.
\end{proof}

\textbf{Claim B:}  If $1\leq n\leq 8$ and $\ell_d(nX)\leq 2|nX|-2$ for some nonzero $d\in G$, then $\ell_d(2X)< \frac{p}{2}$.

\begin{proof}[Proof of Claim B]
By \eqref{nX-simple} and   \eqref{X+r+3-upper}, we have $$|nX|\leq |8X|\leq 27.44(r+3)+1=27.44\,r+83.32.$$ Thus, if $n\geq 2$, then  the hypothesis of Claim B together with  \eqref{r-p-hyp} yields $\ell_d(2X)\leq \ell_d(nX)\leq 2|nX|-2\leq  54.88\,r+164.64<\frac{p}{2}$, as desired. If $n=1$, then  the hypothesis of Claim B together with  \eqref{X+r+3-upper} and \eqref{r-p-hyp} yields $\ell_d(2X)< 2\ell_d(X)\leq 4|X|-4\leq  8.31\,r+33<\frac{p}{2}$, as desired.
\end{proof}

\textbf{Claim C:}
\begin{align*}
&|2X|< (0.9)(|A'|/|A|)\big(2.0574(|X|+r)+3\big)\\ &|4X|< (0.9)(|A'|/|A|)\big(4.2295(|X|+r)+3\big),\quad\und \\
&|8X|< (0.9)(|A'|/|A|)\big(8.9178(|X|+r)+3\big).\end{align*}

\begin{proof}
 By \eqref{nX-simple}, we have  \be\label{8-upit-a} |8X|\leq (|A'|/|A|)8.9178(|X|+r)+1.\ee
In view of \eqref{8-upit-a}, \eqref{X+r+3-upper}, and \eqref{epsilon-ratio}, it follows that
\begin{align} |8X|\leq 27.44(r+3)+1<|A'|.\label{8-prep}
\end{align}
Let $r'$ be defined by $|A'+8X|=|A'|+|8X|+r'$.

Suppose $$r'\leq \frac19|8X|-3.$$ Then \eqref{8-prep} implies \be\label{r1-b-8}r'\leq 3.05\,r+6.26,\ee and then \eqref{r'-hyp} implies $$\frac{9253}{1963}\Big(10\,r'+26.5+\sqrt{2r'+\frac{17}{4}}\Big)\leq 48.3\,r'+136.937\leq 147.4\,r+439.3<p,$$ with the final inequality by \eqref{r-p-hyp}.
By   \eqref{8-prep},  \eqref{p-Abound} and \eqref{r1-b-8}, we have $|A'+8X|=|A'|+|8X|+r'\leq |A|+27.44(r+3)+1+r'\leq
p-102\,r-303+27.44(r+3)+1+r'=
p-74.56\,r-219.68+r'\leq
p-23.44\,r'-66.64
\leq p-9(r'+3)$.
 Thus we can apply Theorem \ref{thm-r-O(p)-a} to $A'+8X$ to conclude via \eqref{r1-b-8} that $\ell_d(A')\leq |A'|+r'+1\leq |A'|+3.05\,r+7.26$ for some nonzero $d\in G$, and then Claim A completes the proof.   So we instead conclude that $r'>\frac19|8X|-3$, meaning $|A'+8X|> |A'|+\frac{10}{9}|8X|-3$. Combined with \eqref{A'-nX-simpl}, we now have $|8X|< (0.9)(|A'|/|A|)\big(8.9178(|X|+r)+3\big)$, as desired.

 We have $|4X|\leq (|A'|/|A|)4.2295(|X|+r)+1$ and $|2X|\leq (|A'|/|A|)2.0574(|X|+r)+1$ by \eqref{nX-simple}. Thus, since $|2X|\leq |4X|\leq |8X|$, the above arguments can also be applied to $2X$ and $4X$ to conclude that $|A'+4X|>|A'|+\frac{10}{9}|4X|-3$ and $|A'+2X|>|A'|+\frac{10}{9}|2X|-3$, yielding the stated bounds for $|4X|$ and $|2X|$.
\end{proof}

\textbf{Claim D:}  $\ell_d(2X)< \frac{p}{2}$ for some nonzero $d\in G$.

\begin{proof}[Proof of Claim D]
Assume by contradiction that
\be\label{ell-up}\ell_d(2X)\geq \frac{p}{2}\quad \mbox{ for all nonzero $d\in G$}. \ee By Claim C, \eqref{X+r+3-upper}, \eqref{X+r+3-upper-alt} and \eqref{r-p-hyp}, we have
\begin{align}
&|X|\leq 3.143(r+2)-r\leq 2.143\,r+6.286<0.01061\,p, \nn\\
&|2X|\leq (0.9)(2.0574)(3.1)(r+2.5)+2.7\leq 5.741\,r+17.051<0.02843\,p,\nn\\
&|4X|\leq (0.9)(4.2295)(3.0885)(r+2.7)+2.7\leq  11.757\,r+34.443<0.05821\,p,\nn\\
&|8X|\leq (0.9)(8.9178)\frac{283}{92}(r+3)+2.7\leq 24.689\,r+76.767<0.128\,p.\label{4-kings}
\end{align}
We handle three cases.

\textbf{CASE D1:} $|X|\geq 11$ and $|X|\geq r+4$.

In this case, \eqref{4-kings} allows us to apply Theorem \ref{thm-lev-sh-fixed} with $s=11$ to $X+X$, $2X+2X$ and $4X+4X$ using the values $K=2.572 $ and $c> 0.0111$, $K= 2.552$ and $c> 0.0289$, and
$K= 2.515$ and $c> 0.0588 $, respectively. Thus, in view of Claim B and \eqref{ell-up}, it follows that $|2X|\geq 2.572|X|-3$, \, $|4X|\geq 2.552|2X|-3$ and $|8X|\geq 2.515|4X|-3$. Iterating these bounds and comparing with the bound from Claim C combined with the  case hypothesis $r\leq |X|-4$, we find
$$16.5|X|-29.8<|8X|\leq 16.06|X|-29.4,$$ which contradicts that $|X|\geq 2$.

\textbf{CASE D2:} $|X|\leq 10$.

In this case, we have $|X|\leq \lceil \log_2 p\rceil $ by \eqref{r-p-hyp}. Thus we can apply Theorem \ref{thm-smallr} to $X+X$, which in view of Claim B and \eqref{ell-up} implies $|2X|\geq 3|X|-3$. We have  $|2X|\geq 3|X|-3\geq 3(r+3)-3\geq 12$ (by \eqref{atombounds-init}). Thus,  as in CASE D1, we can assume $|4X|\geq 2.552|2X|-3$ and $|8X|\geq 2.515|4X|-3$. Iterating these  bounds and comparing with the bound from Claim C combined with  the estimate  $r\leq |X|-3$ (from \eqref{atombounds-init}), we find
$$19.254|X|-29.8<|8X|\leq 16.06|X|-21.378,$$ which implies $|X|\leq 2$, contradicting that $|X|\geq r+3\geq 5$ (by \eqref{atombounds-init}).

\textbf{CASE D3:} $|X|=r+3$ and $r\geq 8$.

In this case, by Claim C and \eqref{r-p-hyp}, we have
\begin{align}
&|X|=r+3<0.00498\,p \nn\\
&|2X|\leq 3.704\,r+8.255<0.0184\,p\nn\\
&|4X|\leq 7.614\,r+14.12<0.0377\,p\nn\\
&|8X|\leq 16.053\,r+26.78<0.0795\,p.\nn
\end{align}
This allows us to apply Theorem \ref{thm-lev-sh-fixed} with $s=11$ to $X+X$, $2X+2X$ and $4X+4X$ using the values $K=2.578 $ and $c> 0.00561$, $K= 2.564$ and $c> 0.01845$, and
$K= 2.541$ and $c> 0.0382 $, respectively. Thus, in view of Claim B and \eqref{ell-up}, it follows that $|2X|\geq 2.578|X|-3$, \, $|4X|\geq 2.564|2X|-3$ and $|8X|\geq 2.541|4X|-3$. Iterating these bounds and comparing with the bound from Claim C combined with the case hypothesis $r=|X|-3$, we find
\be\label{laxup}16.795|X|-30.169<|8X|\leq 16.06|X|-21.378,\ee which in view of  $|X|=r+3\geq 11$ is only possible if $r=8$ and $|X|=11$. For this case, we have $|2X|\geq \lceil 2.578|X|-3\rceil=26$, \ $|4X|\geq \lceil 2.564|2X|-3\rceil \geq 64$ and $|8X|\geq \lceil 2.541|4X|-3\rceil\geq 160$, while \eqref{laxup} implies $|8X|\leq 155$, contradicting this to complete Claim D.
\end{proof}

In view of Claim D, we have $\ell_{d}(X)\leq \ell_{d}(2X)<\frac{p}{2}$ for some nonzero $d\in G$, ensuring that both $X+X$ and $2X+2X$ rectify. This will allow us to apply Theorem \ref{thm-3k4Z} to $X+X$ and $2X+2X$ as done when proving Theorem \ref{fouier-II-distinct}.
%By \eqref{A'-nX-simpl} and the Cauchy-Davenport Theorem, we have
%\begin{align}\label{2+4-upper-2}&|A'|+|2X|-1\leq |A'+2X|\leq |A'|+2.0483(|A'|/|A|)(|X|+r) \;\und\\ \label{2+4-upper-4}&|A'|+|4X|-1\leq |A'+4X|\leq |A'|+4.1932(|A'|/|A|)(|X|+r).\end{align}

\textbf{Claim E:} $|2X|\geq 3|X|-3$.

\begin{proof}[Proof of Claim E] Let $$h=|X|-2$$ and let $C'=-(A+X)^\mathsf c$.
 Note that \be\label{r-h-p}r\leq |X|-3=h-1\ee by \eqref{atombounds-init}.
Assume by contradiction that $|2X|\leq 3|X|-4$. Then,
since $X+X$ rectifies, Theorem \ref{thm-3k4Z} implies that $$\ell_d(X)\leq |X|+(|X|-3)= |X|+h-1\leq |X|+h,$$ for some nonzero $d\in G$.  By \eqref{AX-1} and \eqref{r-h-p}, we have $|A+X|\leq |A|+|X|+h-1$ and $|X|=h+2$. By \eqref{ABC-size} and \eqref{X+r+3-upper}, we have $|A|\geq  100(r+3)\geq 100\cdot\frac{92}{283}(|X|+r)=100\cdot \frac{92}{283}(h+2+r)\geq h+4$.
By \eqref{AX-1}, \eqref{p-Abound} and  \eqref{X+r+3-upper}, we have $|A+X|+h+3\leq |A|+2|X|+r+1\leq |A|+2(|X|+r)\leq  p-102\,r-303+\frac{283}{46}(r+3)< p$, ensuring that $|C'|\geq h+3$. By \eqref{X+r+3-upper} and  \eqref{r-p-hyp}, we have $35h+10=35|X|-60\leq 35\cdot \frac{283}{92}(r+3)-60\leq 108r+263<p$. Thus we can apply Theorem \ref{thm-mario-apred}.2 to the additive trio $(X,A,C')$ (with $h=|X|-2$) to conclude that $$\ell_d(A)\leq |A|+h=|A|+|X|-2.$$

By hypothesis, we have $|A+B|= |A|+|B|+r$. By \eqref{ABC-size} and \eqref{X+r+3-upper}, we have $|A|,\,|B|\geq  100(r+3)\geq \frac{283}{46}(r+3)> r+2|X|=r+4+2h$. By \eqref{ABC-size}, we have $|C|\geq r+3$. This allows us to apply Theorem \ref{thm-mario-apred}.1 to the additive trio $(A,B,C)$ to yield all desired conclusions for Theorem \ref{thm-ideal-density}. So we can now  instead assume $|2X|\geq 3|X|-3$, as claimed.
\end{proof}

Let $C'=-(A'+2X)^\mathsf c$.
If $|4X|\geq 3|2X|-3$, then Claims E and C together with $r\leq |X|-3$ imply that $9|X|-12\leq |4X|\leq (0.9)(4.2295(2|X|-3)+3)\leq 7.6131|X|-8.719$, implying $|X|\leq 2$, which contradicts that $|X|\geq r+3=5$ by \eqref{atombounds-init}. Therefore we must instead have
\be\label{4X} |4X|\leq 3|2X|-4.\ee
As a result, since $2X+2X$ rectifies, we can apply Theorem \ref{thm-3k4Z} to $2X+2X$ to conclude via Claim  C and \eqref{X+r+3-upper} that \be\label{tagyourup}\ell_d(2X)\leq |2X|+h\quad\mbox{ where $h:=|2X|-3\leq 5.696\,r+16.788$.}\ee
If $|A'+2X|\geq |A'|+|2X|+h$, then combining this with  Claim E, \eqref{A'-nX-simpl} and $r\leq |X|-3$ (by \eqref{atombounds-init}) yields  $|A'|+6|X|-9\leq |A'|+|2X|+h\leq |A'+2X|\leq |A'|+4.115\,|X|-6.172$, implying $|X|\leq 1$, which contradicts that $|X|\geq r+3\geq 5$ by \eqref{atombounds-init}.
Therefore we instead conclude that $|A'+2X|\leq |A'|+|2X|+h-1$. By definition of $h$, we have $|2X|=h+3$. By \eqref{epsilon-ratio} and \eqref{tagyourup}, we have $|A'|>50\,r+150\geq 5.696\,r+20.788\geq h+4$. By  \eqref{A'-nX-simpl},  \eqref{X+r+3-upper}, \eqref{tagyourup} and  \eqref{p-Abound}, we have
 $|A'+2X|+h+4\leq |A'|+6.329\,r +22.986+h\leq |A|+12.025\,r+39.774\leq p-89.975\,r-263.226<p$, ensuring $|C'|\geq h+4$. Finally, in view of \eqref{tagyourup} and \eqref{r-p-hyp}, we have $35h+10\leq 199.36\,r+597.58<p$. Thus we can apply Theorem  \ref{thm-mario-apred}.2 to the additive trio $(2X,A',C')$ to conclude that $\ell_d(A')\leq |A'|+h\leq |A'|+5.696\,r+16.788$, with the latter inequality by \eqref{tagyourup}. This allows us to  apply Claim A to complete the proof.
\end{proof}

\end{document}